\documentclass[11pt,a4paper]{article}
\usepackage[T1]{fontenc}
\usepackage[utf8]{inputenc}
\usepackage{mathptmx}
\usepackage{amsmath,amssymb,amsthm,mathtools}
\usepackage{enumitem,booktabs,longtable,array,geometry,graphicx}
\usepackage{microtype,caption,placeins,needspace}
\usepackage{titling}
\predate{}\postdate{}
\usepackage{xcolor,tikz}
\usetikzlibrary{calc,positioning,arrows.meta,backgrounds}
\usepackage[hidelinks,bookmarksnumbered]{hyperref}
\theoremstyle{plain}
\newtheorem{theorem}{Theorem}[section]
\newtheorem{lemma}[theorem]{Lemma}
\newtheorem{proposition}[theorem]{Proposition}
\newtheorem{corollary}[theorem]{Corollary}

\theoremstyle{definition}
\newtheorem{definition}[theorem]{Definition}
\theoremstyle{remark}

\newtheorem{example}[theorem]{Example}

\newcommand{\sidx}{\chi'_s}

\DeclareMathOperator{\per}{per}

\DeclareMathOperator{\sgn}{sgn}
\numberwithin{equation}{section}
\setlist{itemsep=2pt,topsep=4pt}
\hypersetup{
 pdftitle={Strong edge-colourings of truncations of cubic graphs},
 pdfauthor={Domenico Labbate and Federico Romaniello},
 pdfsubject={Constructive proofs for bipartite and claw-free cubic bases},
 pdfkeywords={strong edge-colouring, graph truncation, cubic graphs, bipartite graphs, claw-free graphs, det-extremal graphs}
}
\title{\bfseries Strong edge-colourings of truncations of cubic graphs}
\preauthor{}\postauthor{}
\newcommand{\paperauthor}[2]{%
 \begin{center}
 {\large #1}\\[1mm]
 {\normalsize\itshape #2}
 \end{center}%
}
\author{%
\paperauthor{Domenico Labbate}{%
Dipartimento di Ingegneria,\\
Universit\`a degli Studi della Basilicata, Potenza, Italy}
\paperauthor{Federico Romaniello}{%
Dipartimento per l'Innovazione Umanistica, Scientifica e Sociale (DIUSS),\\
Universit\`a degli Studi della Basilicata, Potenza, Italy}
}
\date{}

\begin{document}
\maketitle
\begingroup
\renewcommand{\thefootnote}{}
\footnotetext[0]{\raggedright\textit{E-mail addresses:}
\href{mailto:domenico.labbate@unibas.it}{domenico.labbate@unibas.it}
(Domenico~Labbate),
\href{mailto:federico.romaniello@unibas.it}{federico.romaniello@unibas.it}
(Federico~Romaniello).}
\endgroup

\begin{abstract}
We study a problem posed by Kardo\v{s} asking whether the truncation
of every connected simple bridgeless cubic graph has strong chromatic
index six. The truncation $T(G)$ is obtained by replacing each vertex
of $G$ with a triangle and attaching its incident edges to distinct
vertices of that triangle. An affirmative answer was previously
established for prism bases by Han and Cui. We prove that
$\sidx(T(G))=6$ whenever $G$ is a connected simple cubic graph
that is bipartite or is both bridgeless and claw-free. Both proofs
are constructive and use a common compatibility criterion for
combining colourings of smaller pieces. The bipartite result also
extends to non-det-extremal multigraphs.
\par\smallskip
\noindent\textbf{Keywords:} strong edge-colouring; graph truncation;
cubic graphs; bipartite graphs; claw-free graphs; det-extremal graphs.

\smallskip
\noindent\textbf{Math. Subj. Class.:} 05C15, 05C70, 05C76.
\end{abstract}

\section{Introduction}\label{intro:section}

All graphs considered in this paper are finite, non-empty and loopless. Unless
specified otherwise, they are simple. Whenever parallel edges are
allowed, these edges and their incidences are distinguished. For a graph
$G$, we denote its vertex set and edge set by $V(G)$ and $E(G)$,
respectively. We write $N_G(v)$
for the neighbourhood of a vertex $v$, $\partial_G(v)$ for the set of
edges incident with $v$, and $\delta_G(S)$ for the set of edges with
exactly one endpoint in $S\subseteq V(G)$. Subscripts are omitted when
there is no ambiguity. A graph is \emph{cubic} if every vertex has
degree three, \emph{subcubic} if its maximum degree is at most three,
and \emph{bridgeless} if no edge is a cut-edge.
The term $k$-connected refers to vertex connectivity; edge connectivity
is always stated explicitly.

A matching is \emph{perfect} if it covers every vertex, and
\emph{induced} if its endpoints induce no edges other than those of
the matching. A cycle is \emph{$M$-alternating} if its edges alternate
between a matching $M$ and its complement. A \emph{strong edge-colouring}
of $G$ is an edge-colouring in which every colour class is an induced
matching. Equivalently, it is a proper edge-colouring in which any two
edges adjacent to a common edge receive distinct colours. Two edges are in \emph{strong conflict} if they share an endpoint or an edge
joins an endpoint of one to an endpoint of the other. The least number
of colours in a strong edge-colouring is the \emph{strong chromatic index}
of $G$, denoted by $\sidx(G)$. Equivalently, any two distinct edges of
$G$ at distance at most two in the line graph $L(G)$ must receive different colours,
so $\sidx(G)=\chi(L(G)^2)$.

Strong edge-colourings were introduced by Fouquet and Jolivet in 1983
\cite{FJ1983}. A central problem is to bound the strong chromatic
index in terms of the maximum degree. In 1985, Erd\H{o}s and
Ne\v{s}et\v{r}il conjectured that every graph of maximum degree $\Delta$
has strong chromatic index at most $5\Delta^2/4$; see
\cite{Erdos1988}. For subcubic graphs, Andersen \cite{Andersen1992}
and, independently, Hor\'ak, He Qing and Trotter \cite{HHT1993}
established the sharp upper bound of ten colours.

Smaller bounds become possible when additional structure is imposed.
The claw-free case is particularly relevant here. A \emph{claw} is
a graph isomorphic to $K_{1,3}$, and a \emph{diamond} is a graph
isomorphic to $K_4$ with one edge deleted. The two nonadjacent vertices
of a diamond are its \emph{tips}. A graph is \emph{claw-free} or
\emph{diamond-free} if it has no induced claw or induced diamond,
respectively.

In 2022, Lv, Li and Zhang \cite{LLZ2022} proved that every connected
claw-free subcubic graph other than the triangular prism admits a
strong edge-colouring with at most eight colours. Lin and Lin
\cite{LinLin2023} subsequently improved this bound to seven and constructed
an infinite family attaining it. These results motivate the study of
more restricted classes in which six colours might suffice.

One such class is provided by truncations. Let $G$ be a loopless cubic
multigraph. Its \emph{truncation} $T(G)$ has a vertex $(v,e)$ for each
incidence $v\in e$. The three vertices associated with $v$ induce a
triangle $\Delta_v$, called a \emph{canonical triangle}. For every
edge $e=uv$ of $G$, the graph $T(G)$ has the edge $(u,e)(v,e)$.
Edges within canonical triangles are called \emph{internal}, and
the edges corresponding to $E(G)$ are called \emph{external}. The
vertex $(v,e)$ is the \emph{port} of $e$ at $v$. In particular,
parallel edges of $G$ have distinct ports, and $T(G)$ is a simple
cubic graph that is both claw-free and diamond-free.

Han and Cui \cite{HanCui2023} proved that truncations of the prisms
$C_n\square K_2$, for every $n\geq3$, admit strong edge-colourings
with six colours. At the 33rd Workshop on Cycles and Colourings in
2025, Kardo\v{s} \cite[Problem~4.1]{Kardos2025} asked whether six
colours suffice for every diamond-free claw-free cubic graph,
also formulating the question for truncations of cubic graphs.
Kardo\v{s} subsequently clarified in a private communication
\cite{KardosPrivate} that the intended formulation includes the
hypothesis of bridgelessness. For simple bases, this leads to the
following question:
\begin{quote}
\emph{Does every connected simple bridgeless cubic graph $G$ satisfy
$\sidx(T(G))=6$?}
\end{quote}

The multigraph formulation requires one further exception. Let $M_0$
denote the cubic multigraph consisting of two vertices joined by
three parallel edges. Its truncation $T(M_0)$ is the triangular prism
and has strong chromatic index nine.

Tanwar \cite{Tanwar2026} constructed an
$18$-vertex diamond-free claw-free cubic graph with strong
chromatic index seven.\footnote{Tanwar uses diamond-free to exclude
diamonds as arbitrary subgraphs. For connected simple cubic graphs,
this convention differs from ours only for $K_4$ and does not affect
his example.} His graph has a bridge, and its triangle
contraction has parallel edges. It therefore does not refute the
bridgeless version of the problem, nor the version restricted to
simple cubic bases.

The present paper establishes two further cases. The three edges of
a canonical triangle and the three external edges incident with it
require six distinct colours. We show that this local lower bound
is attained whenever the simple cubic base is bipartite or is
bridgeless and claw-free. The proofs describe how to choose colourings
on small pieces and make them agree where the pieces meet.

The two main results can now be stated precisely.

\begin{theorem}\label{bip:main}
Let $G$ be a connected simple cubic bipartite graph. Then
\[
 \sidx(T(G))=6.
\]
\end{theorem}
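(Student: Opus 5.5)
The lower bound is local. Fix a canonical triangle $\Delta_v$. Its three internal edges pairwise share endpoints. Each external edge at a port of $\Delta_v$ shares an endpoint with two internal edges of $\Delta_v$ and is joined by an edge to the third. Two external edges at different ports of $\Delta_v$ are joined by the internal edge between those ports. So these six edges are pairwise in strong conflict, and $\sidx(T(G))\ge 6$. All the work is in the upper bound: I will construct a strong $6$-edge-colouring of $T(G)$.

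The starting point is K\"onig's theorem. Since $G$ is cubic and bipartite, $E(G)=M_1\cup M_2\cup M_3$ splits into perfect matchings. Call a port $(v,e)$ of \emph{type} $i$ if $e\in M_i$. Every canonical triangle then has exactly one port of each type, and both ends of an external edge have the same type.

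The naive scheme is to give external edges of $M_i$ colour $i$ and internal edges colours from $\{4,5,6\}$. This fails. At an external edge $e=uv$, the two internal edges of $\Delta_u$ at the port of $e$ and the two internal edges of $\Delta_v$ at the port of $e$ are pairwise in conflict, and all four conflict with $e$. That requires five distinct colours there, which $\{4,5,6\}$ cannot supply. So the colourings of internal and external edges must be mixed, and the colouring should depend on the bipartition classes $A,B$ and on the types.

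I would therefore look for a colouring rule of the following form. For $v\in A$, the internal edge of $\Delta_v$ opposite the port of type $k$ gets colour $\alpha(k)$. For $v\in B$ it gets colour $\beta(k)$. The external edge of type $k$ gets colour $\gamma(k)$. Here $\alpha,\beta,\gamma$ are maps $\{1,2,3\}\to\{1,\dots,6\}$, with $\alpha$ and $\beta$ injective. Because every conflict in $T(G)$ lives inside the union of two adjacent canonical triangles and the external edge between them, strongness reduces to a finite check at a single external edge $uv$ with $u\in A$, $v\in B$, of each type $k$. That check covers the $3+3+1$ edges of $\Delta_u$, $\Delta_v$ and $uv$, together with the other external edges at $u$ and $v$.

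If no rule of this rigid form works, I would relax it. Choose a perfect matching $M=M_3$. Its complement $M_1\cup M_2$ is a $2$-factor whose cycles are even because $G$ is bipartite. Each such cycle $C$ yields in $T(G)$ a closed chain of triangles, and I would colour each chain periodically with period $2$ (or $4$) along $C$. The chains are then glued along the $M$-edges. The gluing requires a compatibility condition: a small set of ``boundary patterns'' on the two internal edges at each $M$-port, such that any two patterns that can meet across an $M$-edge are compatible. This is presumably the common compatibility criterion the paper alludes to. Since all cycles have even length, a period-$2$ pattern closes up consistently around every cycle, which is exactly where bipartiteness enters.

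The main obstacle I expect is the finite combinatorial search: exhibiting explicit $\alpha,\beta,\gamma$, or explicit periodic chain patterns, that satisfy the five-distinct-colours constraint at every external edge simultaneously for all three types, including distance-two conflicts through the other external edges at $u$ and $v$. I would first try the rigid rule with $\gamma$ taking values in $\{1,2,3\}$, and then allow $\gamma$ to vary with the $2$-factor position. I would verify candidates by the single-edge local check, and fall back on the chain-and-matching decomposition if needed.
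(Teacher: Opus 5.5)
Your lower bound is correct. The upper bound, however, is not established: both colouring schemes you propose provably fail, and the second fails precisely on the hard graphs.

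\textbf{The rigid rule fails for every $G$.} As your own lower-bound argument shows, a $6$-colouring uses all six colours on the six edges around each $\Delta_v$. If every external edge of type $k$ receives $\gamma(k)$, then every vertex has external palette $\gamma(\{1,2,3\})$. Hence every internal palette is the same $3$-set $I=\Omega\setminus\gamma(\{1,2,3\})$, whatever $\alpha$ and $\beta$ are. At an external edge $uv$, the two internal edges of $\Delta_u$ at its port and the two of $\Delta_v$ at its port are pairwise in strong conflict. They would need four distinct colours from $I$, which is impossible. This is exactly the obstruction you found for the naive scheme; the choice $\{4,5,6\}$ played no role. So the finite search over $\alpha,\beta,\gamma$ has no solution, and external colours must vary among edges of the same type.

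\textbf{The periodic fallback fails on the Heawood graph $\mathbb H$.} Take period $2$ along a cycle $C$ of $E(G)\setminus M$. Then all $A$-triangles on $C$ carry one state and all $B$-triangles another. So the two cycle colours are constant on $C$, and the $M$-colour is a constant $z_A$ at its $A$-vertices and $z_B$ at its $B$-vertices. An $M$-edge joining two vertices of $C$ forces $z_A=z_B$. Consecutive vertices of $C$ then have equal external, hence equal internal, palettes, which is the same obstruction. Period $4$ closes up only if $4$ divides $|C|$. But $\mathbb H$ is det-extremal. Each cycle of a $2$-factor $F$ is alternating with respect to the perfect matching formed by alternate edges of every cycle of $F$, so Lemma~\ref{bip:sign-cycle} gives $|C|\equiv2\pmod 4$. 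These lengths are congruent to $2$ modulo $4$, at least $6$, and sum to $14$, so every $2$-factor of $\mathbb H$ is a Hamiltonian cycle. Hence, for every choice of $M$, period $4$ does not close and period $2$ is destroyed by the chords. The relevant parity is that of the half-length, not the length; evenness of the cycles is not where bipartiteness does its work.

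\textbf{What is missing.} The paper needs a global ingredient. It sets $c(ab)=(\tau(ab),s(a))$, where $s\colon A\to\{0,1\}$ makes every neighbourhood $N_G(b)$ with $b\in B$ contain both bits. At each $b$ it swaps the two internal colours opposite the majority-bit ports. Such an $s$ exists exactly when $G$ is not det-extremal (Lemma~\ref{bip:mixed-bits}). For det-extremal $G$ no such $s$ exists, and the paper uses:
\begin{itemize}
\item an explicit colouring of $T(\mathbb H)$;
\item exact vertex simulators $H-v$;
\item McCuaig's vertex-sum characterisation in the $3$-connected case;
\item a $2$-edge-cut decomposition, based on the structure theorem of Funk, Jackson, Labbate and Sheehan, when the connectivity is two.
\end{itemize}
Your proposal contains no substitute for these steps.
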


\begin{theorem}\label{cf:main}
Let $G$ be a connected simple bridgeless cubic claw-free
graph. Then
\[
 \sidx(T(G))=6.
\]
\end{theorem}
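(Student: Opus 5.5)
Since the three edges of a canonical triangle and the three external edges at it are pairwise in strong conflict, $\sidx(T(G))\ge 6$ always holds. The work is the upper bound, and I would get it by combining explicit colourings of small pieces, guided by a structure theorem for the base.

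\textbf{Structure of the base.} I would use the known description of connected simple bridgeless claw-free cubic graphs (due to Oum). Such a $G$ is one of the following:
\begin{itemize}
\item $K_4$;
\item a ring of diamonds;
\item a graph obtained from a $2$-edge-connected cubic multigraph $H$ by replacing every vertex with a triangle and some edges with strings of diamonds.
\end{itemize}
The first two cases are small or periodic, so I would treat them by direct construction:
\begin{itemize}
\item $T(K_4)$ has $12$ vertices and $18$ edges, so I would write down an explicit $6$-colouring.
\item For a ring of $k$ diamonds, I would colour the truncation of one diamond so that the colours on its two pendant external edges, together with the internal edges at their ports, form a fixed \emph{boundary pattern} at both ends. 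Concatenating $k$ copies then closes up. If the pattern only repeats up to a permutation of colours, I would use a short case analysis on $k$ modulo the order of that permutation, supplying alternative diamond colourings where needed.
\end{itemize}

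\textbf{Pieces and compatibility.} For the general case I would cut $T(G)$ along the external edges of $T(G)$ that correspond to edges of $H$ (and to the links inside diamond strings). This leaves pieces of two kinds:
\begin{itemize}
\item $T(K_3)$, a truncated triangle with three pendant half-edges;
\item the truncation of a diamond, with two pendant half-edges.
\end{itemize}
Each piece gets a strong $6$-colouring. Its \emph{interface} at each pendant edge $e$ records the colour of $e$ together with the unordered pair of colours on the two internal edges at the port of $e$. Since the six edges around each canonical triangle use all six colours, gluing two half-edges into one edge is legal exactly when:
\begin{itemize}
\item both sides assign $e$ the same colour, and
\item the two internal pairs at the two ends of $e$ are disjoint.
\end{itemize}
This is the compatibility criterion mentioned in the abstract. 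Every other potential conflict stays inside a piece, at distance at least three across the cut.

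\textbf{Catalogue of interfaces.} I would next classify the achievable interfaces, up to permutation of the six colours:
\begin{itemize}
\item for $T(K_3)$, with all three pendants taken together;
\item for a diamond string, with its two ends taken together.
\end{itemize}
The aim is to show that the catalogue is rich enough. For a diamond string, both end-interfaces should be prescribable, compatibly with any given outside pattern. Once that holds, a string can be contracted back to an ordinary edge of $H$ without loss of generality.

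\textbf{Assembling along $H$.} The problem then becomes assigning interfaces to the truncated triangles at the vertices of $H$, so that every edge of $H$ is compatible. I would use bridgelessness of $H$ here. By Petersen's theorem, $H$ has a perfect matching $M$, and $H-M$ is a union of cycles. The plan is:
\begin{enumerate}
\item Colour every $M$-edge with a single fixed colour, which is allowed because the $M$-edges are at distance at least three in $T(G)$ after truncation.
\item Go around each cycle of $H-M$ and choose each triangle's colouring from the catalogue so that it agrees with its predecessor.
\item Use the freedom at the matching edge to close each cycle.
\end{enumerate}
Parallel edges and digons in $H$ would need a separate local check.

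\textbf{Main obstacle.} I expect the hardest step to be closing the cycles in step 3, since propagation around a cycle may force a nontrivial permutation at the end. My first attempt would be to show that the set of compatible interface transitions acts transitively enough that two alternative choices at one triangle fix any parity obstruction. Failing that, I would switch to a different perfect matching, or use an even subdivision via diamond strings, to kill the obstruction.
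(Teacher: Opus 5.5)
Your reduction is sound and parallels the paper's: the lower bound, Oum's decomposition, cutting into truncated triangles and truncated diamonds, and the interface criterion. The diamond part is easier than you fear. One colouring of the $2$-port diamond, with entry signature $(c,V)$ and exit $(c,U)$ for any disjoint $2$-sets $U,V\subseteq\Omega\setminus\{c\}$, can be repeated verbatim along any string and around any ring, so no case analysis on $k$ is needed.

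The genuine gap is the assembly along $H$. This is the heart of the proof, and you leave it open as the ``main obstacle''. Colouring all $M$-edges alike does not make them a source of freedom. Each $M$-edge $uv$ still demands $P_u\cap P_v=\varnothing$ between the matching ports of two pieces that usually lie on different cycles of $H-M$. The matching edge is not on either cycle, so it cannot be what closes a cycle. Choosing piece colourings cycle by cycle ``to agree with the predecessor'' therefore ignores every matching-edge constraint. You also supply no catalogue of interfaces for the $9$-vertex piece and no argument that the resulting global constraint system is solvable. Your fallbacks do not help. A different perfect matching cannot remove odd cycles in general: every $2$-factor of the Petersen graph is two $5$-cycles. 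Inserting diamond strings alters $G$, which you are not free to do.

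The missing idea is to drive the assembly by a proper $3$-vertex-colouring of $H$ (the paper's skeleton $Q$) instead of a perfect matching. By Brooks' theorem this exists unless $H=K_4$, and that single case is settled by an explicit certificate for $T(T(K_4))$. Interblock edges are pairwise non-conflicting, so you may colour each one by the pair of vertex colours at its ends: $RB\mapsto1$, $RV\mapsto2$, $BV\mapsto3$. A vertex of colour $C$ then sees only the two colours of $S_C$, where $S_R=\{1,2\}$, $S_B=\{1,3\}$ and $S_V=\{2,3\}$. Hence each piece has pattern $(a,a,a)$ or $(a,a,b)$. At a port with interblock colour $c$ on a vertex of colour $C$, prescribe the exposed pair $(S_C\setminus\{c\})\cup\{\eta_C\}$, with reserved colours $\eta_R=4$, $\eta_B=5$, $\eta_V=6$. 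The two ends of every edge then get disjoint pairs automatically, which is a three-row check. Both patterns are realised in the $9$-vertex piece by two explicit colourings together with colour permutations and port symmetries. Every constraint is thus local, and no propagation or cycle closing arises.
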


The construction used to prove Theorem~\ref{cf:main} also gives
a result for double truncations: for every loopless
cubic multigraph $Q$,
\[
 \sidx(T(T(Q)))=6.
\]
This is Corollary~\ref{dt:multigraph-extension}, stated after the
proof of the claw-free theorem.

The common starting point is a local description of the colouring.
Section~\ref{pre:section} identifies the information that must be
recorded at each port and gives the compatibility condition for joining
two coloured pieces. Once that condition is satisfied at every
interface, the local colourings together give a strong colouring of
the whole truncation.

Section~\ref{bip:section} proves Theorem~\ref{bip:main}. Matching signs
separate the argument into two cases. The characterisation of the
non-det-extremal case by mixed binary neighbourhoods is the bipartite
formulation of a result of Choi, Kwon and Park
\cite[Theorem~3.5]{CKP2026}. We give a direct constructive proof and
show how the resulting labels yield an explicit $6$-colour formula.
In the det-extremal case, we combine our rooted replacement
construction and a Heawood colouring with the structural theorems
of McCuaig~\cite{McCuaig2000} and Funk, Jackson, Labbate and
Sheehan~\cite{FJLS2003}.

Section~\ref{cf:section} proves Theorem~\ref{cf:main}. In this
reduction, $G$ denotes the original base, $G_0$ the reduced base,
and $Q$ its skeleton. When $G\ne K_4$ has a triangle block,
suppressing its diamond chains gives $G_0\cong T(Q)$. We construct
a colouring of $T(G_0)$ and then restore the diamonds to colour
$T(G)$, preserving the required boundary signatures. The
double-truncation corollary follows at the end of the section.

Section~\ref{ext:section} proves extensions to multigraph bases
using rooted replacements and blocks formed by parallel edges.
Section~\ref{concl:section} distinguishes these results from the
remaining questions about single truncations.

\FloatBarrier
\section{Local colourings}\label{pre:section}

Let $G$ be a loopless cubic multigraph. The three edges of
any canonical triangle $\Delta_v$ and the three external edges incident
with its ports are pairwise in strong conflict. Consequently,
\begin{equation}\label{pre:lowerbound}
 \sidx(T(G))\geq6.
\end{equation}

To attain this bound, the six edges around each canonical triangle must
use all six colours. The problem is therefore to choose these local
assignments so that they remain compatible across external edges.

Fix a set $\Omega$ of six colours. The labels of these colours may
vary between constructions. A \emph{local state} at $v$ assigns distinct
colours to the three internal edges of $\Delta_v$ and its three
incident external edges. Write $c_v(e)$ for the external colour at the
port of $e\in\partial(v)$. Let $I_v$ be the set of internal colours,
the complement in $\Omega$ of the three external colours. For $e\in\partial(v)$,
let $h_v(e)$ be the colour of the internal edge opposite the port $(v,e)$.
Thus $c_v$ and $h_v$ map $\partial_G(v)$ bijectively onto the external
and internal colour sets, respectively. The two internal colours exposed
at that port form
\[
 P_v(e)=I_v\setminus\{h_v(e)\}.
\]
The pair $(c_v(e),P_v(e))$ is the \emph{signature} at this port.
Thus $h_v(e)$ records the internal colour that is not incident with
the port, whereas $P_v(e)$ records the two internal colours that are.
The signature retains exactly the information needed when this port
is joined to another one.
For an ordered list $(e_0,e_1,e_2)$ of incidences, we may abbreviate the
state as $(c_v(e_0),c_v(e_1),c_v(e_2);h_v(e_0),h_v(e_1),h_v(e_2))$.
In a simple base, a port order may be specified by listing the
neighbours. With parallel edges, we use edge labels whenever needed
to distinguish the incidences.

Later we will replace a vertex or an edge by a larger coloured piece.
To describe the boundary of such a piece, we use \emph{cubic poles}:
loopless multigraphs with dangling half-edges in which every retained
vertex has three incidences, counting half-edges. A dangling incidence
is called a \emph{port} of the pole.

The \emph{augmented truncation} of a pole replaces each retained vertex
by its incidence triangle, keeps the ordinary external edges, and
replaces each dangling half-edge by a pendant edge to a distinct new
degree-$1$ vertex. These auxiliary degree-$1$ vertices are not
truncated. A strong colouring of a pole means a strong colouring of
this augmented truncation. The signature at a port of the pole is
measured at the corresponding retained endpoint in the augmented
truncation.

Joining two dangling half-edges at the base level replaces them by one
ordinary edge. In the augmented truncations, this deletes the two
auxiliary endpoints and replaces their pendant edges by one external
edge joining the retained ports. The operation may join ports of the
same pole or of different poles, provided that the resulting base is
loopless and cubic. The following criterion expresses compatibility
entirely in terms of these signatures.

\begin{lemma}\label{pre:interface}
Let $G$ be a loopless cubic multigraph, let $\Omega$ be a set of six
colours, and let $(c_v,h_v)$ be a local state at each vertex $v\in V(G)$.
These states define a strong edge-colouring of $T(G)$ with colour set
$\Omega$ if and only if, for every edge $e=uv\in E(G)$,
\begin{equation}\label{pre:exposed}
 c_u(e)=c_v(e),\qquad P_u(e)\cap P_v(e)=\varnothing.
\end{equation}
Equivalently, let $c:E(G)\to\Omega$ be a proper edge-colouring and,
for each $v\in V(G)$, let $h_v:\partial_G(v)\to I_v$ be a bijection,
where $I_v=\Omega\setminus\{c(f):f\in\partial_G(v)\}$.
The assignments $c$ and $(h_v)_{v\in V(G)}$ define a strong
edge-colouring of $T(G)$ if and only if
\begin{equation}\label{pre:hidden}
 I_u\cap I_v\subseteq\{h_u(e),h_v(e)\}\qquad(e=uv).
\end{equation}
For the augmented truncation of a cubic pole, the same criterion applies
to its ordinary edges. A dangling edge imposes only the local $6$-colour
condition at its retained endpoint.
\end{lemma}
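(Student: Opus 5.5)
The plan is to list every pair of edges of $T(G)$ in strong conflict and check which of them a family of local states already handles. The edges fall into the six edges around each canonical triangle $\Delta_v$ (its three internal edges and the three external edges at its ports). Two edges conflict exactly when they share an endpoint or are joined by an edge. I would classify conflicting pairs by the triangles involved.

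First, pairs lying among the six edges around a single $\Delta_v$. These are already distinct by definition of a local state, so the only requirement here is that each state uses six distinct colours. This is the ``local $6$-colour condition''.

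Second, pairs not contained in any single such six-set. Take an external edge $f=(u,e)(v,e)$. The edges at distance at most one from $f$ in $L(T(G))$ are the internal edges of $\Delta_u$ and $\Delta_v$ and the other external edges at $u$ and $v$. The only pairs not contained in one local six-set are the following.
\begin{enumerate}
\item[(a)] The external edge $e$ itself, which appears in both six-sets. Consistency requires $c_u(e)=c_v(e)$.
\item[(b)] An internal edge of $\Delta_u$ incident with the port $(u,e)$, paired with an internal edge of $\Delta_v$ incident with $(v,e)$. These are joined by $f$, so they conflict.
\end{enumerate}

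I would then check that no other cross pairs conflict. An internal edge of $\Delta_u$ opposite the port is at distance two from $f$ in $T(G)$, hence at distance three or more from edges of $\Delta_v$. Two external edges at different ports of $\Delta_u$ and $\Delta_v$ are joined by no edge; a connecting edge would have to be $f$, but $f$ meets only the ports $(u,e)$ and $(v,e)$. Internal edges of distinct triangles not joined by an external edge are far apart.

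With parallel edges, two triangles may be joined by several external edges. The same analysis then applies edge by edge, since ports are distinct. One caveat remains: an internal edge $(u,e)(u,e')$ and an internal edge $(v,e)(v,e')$ with $e,e'$ both parallel $uv$ edges are joined by both $f$ and $f'$. Their requirement is already covered by the condition at either edge, so nothing new is needed. Thus the conditions are (a) and disjointness of the exposed colour sets, which gives \eqref{pre:exposed}. Both conditions are also clearly necessary, so we have the ``iff''.

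For the equivalent form, I would note that the first condition of \eqref{pre:exposed} at all edges says exactly that the external colours form a single map $c$ which is proper, since $c_v$ is injective at each $v$. Then $I_v$ is the complement of the external colours, and $h_v$ is a bijection onto $I_v$. Since $P_u(e)=I_u\setminus\{h_u(e)\}$, the disjointness $P_u(e)\cap P_v(e)=\varnothing$ is equivalent to every common element of $I_u\cap I_v$ lying in $\{h_u(e),h_v(e)\}$. This is \eqref{pre:hidden}.

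For poles, I would observe that a pendant edge is adjacent only to edges at its retained port. Its conflicts are therefore among the six edges around that triangle, which is covered by the local condition.

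The main obstacle is only bookkeeping: making sure the enumeration of cross conflicts is complete in the multigraph case.
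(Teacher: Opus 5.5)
Your proposal is correct and follows essentially the same route as the paper. You classify the strong conflicts and note that all of them lie within a single six-edge local state except two kinds: consistency of each external colour, and the conflicts between internal edges at the two ports of an external edge. You then translate disjointness of the exposed pairs into \eqref{pre:hidden} via $P_u(e)\cap P_v(e)=(I_u\cap I_v)\setminus\{h_u(e),h_v(e)\}$.

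Two small repairs are needed. First, your claim that a connecting edge between external edges at ports of $\Delta_u$ and $\Delta_v$ ``would have to be $f$'' is wrong: such edges can be joined by an internal edge of a third triangle $\Delta_w$ at which both end. In that case they lie in the six-set of $w$, so the fact you actually need is the paper's: two external edges in strong conflict always have ports in a common canonical triangle. Second, ``distance at most one'' in $L(T(G))$ should read ``at most two''.
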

\begin{proof}
Suppose first that the local states define a strong edge-colouring.
An external edge must receive the same colour at its two ends, so
$c_u(e)=c_v(e)$. Moreover, each internal edge incident with its port
in $\Delta_u$ is in strong conflict with each internal edge incident
with its port in $\Delta_v$. Hence $P_u(e)\cap P_v(e)=\varnothing$,
proving necessity.

Conversely, suppose that~\eqref{pre:exposed} holds for every base
edge. Equality of the external colours makes the colouring
well-defined. Two external edges in strong conflict have ports in
the same canonical triangle, and an external edge conflicts with
internal edges only in its two endpoint triangles. These conflicts,
as well as those between internal edges in the same triangle, are
covered by the local $6$-colour states. Two internal edges in distinct
canonical triangles can be in strong conflict only if an external
edge joins one of their endpoints. Their colours then belong to the
two exposed pairs at that interface, which are disjoint by
\eqref{pre:exposed}. Thus every pair of edges in strong conflict
receives distinct colours, proving sufficiency.

The equivalent formulation~\eqref{pre:hidden} follows from
\[
 P_u(e)\cap P_v(e)
 =(I_u\cap I_v)\setminus\{h_u(e),h_v(e)\}.
\]
For a pole, the auxiliary pendant endpoints introduce no internal
edges, so the same description applies; each dangling edge is covered
by the local state at its retained endpoint.
\end{proof}

In particular, one coloured piece may replace another whenever the
signatures agree at corresponding boundary ports of the two pieces.
This preserves compatibility with the rest of the graph.

Figure~\ref{fig:local-interface} records the boundary data at an external edge.

\begin{figure}[!htbp]
\centering
\begin{tikzpicture}[x=1.25cm,y=1.1cm,
  line width=.65pt,
  vertex/.style={circle,fill=black,inner sep=1.6pt},
  colour/.style={font=\small,fill=white,inner sep=1.5pt},
  every node/.style={font=\small}]
  \coordinate (ul) at (-2.5,1);
  \coordinate (ud) at (-2.5,-1);
  \coordinate (up) at (-1,0);
  \coordinate (vl) at (2.5,1);
  \coordinate (vd) at (2.5,-1);
  \coordinate (vp) at (1,0);
  \draw (ul)--(up) node[midway,above right,colour] {$\alpha_1$};
  \draw (up)--(ud) node[midway,below right,colour] {$\alpha_2$};
  \draw (ul)--(ud) node[midway,left=4pt,colour] {$h_u(e)$};
  \draw (vl)--(vp) node[midway,above left,colour] {$\beta_1$};
  \draw (vp)--(vd) node[midway,below left,colour] {$\beta_2$};
  \draw (vl)--(vd) node[midway,right=4pt,colour] {$h_v(e)$};
  \draw (up)--(vp) node[midway,above=3pt,colour] {$c$};
  \draw (ul)--(-3.3,1.6) (ud)--(-3.3,-1.6)
        (vl)--(3.3,1.6) (vd)--(3.3,-1.6);
  \foreach \p in {ul,ud,up,vl,vd,vp} \node[vertex] at (\p) {};
  \node at (-2,1.62) {$\Delta_u$};
  \node at (2,1.62) {$\Delta_v$};
  \node at (-.75,-.82) {$(c,U)$};
  \node at (.75,-.82) {$(c,V)$};
  \node at (-1.8,-2.03) {$U=\{\alpha_1,\alpha_2\}$};
  \node at (1.8,-2.03) {$V=\{\beta_1,\beta_2\}$};
\end{tikzpicture}
\caption{The boundary data at an external edge corresponding to $e=uv$.
The exposed pairs are $U=\{\alpha_1,\alpha_2\}$ and
$V=\{\beta_1,\beta_2\}$. Their disjointness is the compatibility
condition. The opposite internal colours are $h_u(e)$ and $h_v(e)$.}
\label{fig:local-interface}
\end{figure}

\FloatBarrier
\section{Bipartite cubic graphs}\label{bip:section}

Throughout this section, $G$ is a connected simple cubic bipartite
graph with partition sets $A$ and $B$.
The proof separates according to the signs of the perfect matchings.
When both signs occur, a binary assignment on $A$ gives an explicit
colouring formula. When all signs agree, a rooted version of that
assignment colours a piece with one vertex removed. We first use these
pieces to build the $3$-connected case from the Heawood graph, and then
handle the remaining graphs by cutting and rejoining along $2$-edge cuts.

\subsection{Matching signs and the colouring formula}

Since $G$ is cubic and bipartite, $|A|=|B|$ and Hall's
theorem~\cite{Hall1935} guarantees a perfect matching.
Fix such a matching. Its complement is a union of even cycles.
Colouring the matching edges with one colour and alternating two
further colours on these cycles gives a proper edge-colouring
\[
 \tau:E(G)\longrightarrow\{0,1,2\}.
\]
We refer to the values of $\tau$ as \emph{types}. The same argument
applies to loopless cubic bipartite multigraphs, allowing cycles of
length two. The types distinguish the three ports at each vertex;
they will form the first coordinate of our six colours. The second
coordinate will be a bit assigned to vertices in $A$.

Fix orders on $A$ and $B$. A perfect matching $M$ determines a
bijection $A\to B$, whose sign is denoted by $\sgn(M)$. Changing the
orders multiplies all matching signs by the same constant.

\begin{definition}\label{bip:det-definition}
Let $G$ be a bipartite graph with a perfect matching, and fix linear
orders on its two partition sets. We call $G$ \emph{det-extremal}
if all its perfect matchings have the same sign.
\end{definition}

This property is independent of the chosen orders. For a simple
bipartite graph with biadjacency matrix $B_G$, it is equivalent to
$|\det B_G|=\per B_G$, since the determinant is the signed sum of the
perfect matchings; see~\cite{FJLS2003}.

\begin{lemma}\label{bip:sign-cycle}
Let $G$ be a bipartite graph with a perfect matching $M$, and let $C$
be an $M$-alternating cycle of length $2\ell$. Then
\[
\sgn(M\mathbin{\triangle} E(C))
 =(-1)^{\ell-1}\sgn(M).
\]
Here $\triangle$ denotes symmetric difference.
Moreover, $G$ is not det-extremal if and only if some perfect matching
of $G$ admits an alternating cycle whose length is divisible by four.
\end{lemma}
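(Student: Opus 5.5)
The plan is to view the matchings as permutations and compare them through a single cycle. Identify $A$ and $B$ with $\{1,\dots,n\}$ via the fixed orders, so that a perfect matching $M$ corresponds to a permutation $\sigma_M$ with $\sigma_M(a)=b$ whenever $ab\in M$.

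For the first claim, let $C$ be $M$-alternating with vertices $a_1,b_1,a_2,b_2,\dots,a_\ell,b_\ell$ in cyclic order. The edges $a_ib_i$ lie in $M$ and the edges $b_ia_{i+1}$ (indices mod $\ell$) lie in $M' = M\mathbin{\triangle}E(C)$. Outside $C$ the two matchings agree. Hence $\sigma_{M'}=\sigma_M\circ\pi$, where $\pi$ is the permutation of $A$ sending $a_{i+1}\mapsto a_i$. This $\pi$ is a single $\ell$-cycle, so $\sgn\pi=(-1)^{\ell-1}$. Multiplicativity of the sign then gives the formula. The only thing to watch is the composition order, which does not affect the sign.

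For the equivalence, first suppose some perfect matching $M$ has an alternating cycle of length $2\ell$ with $4\mid 2\ell$. Then $\ell$ is even, and by the formula $\sgn(M\mathbin{\triangle}E(C))=-\sgn(M)$. So $G$ has two perfect matchings of opposite sign and is not det-extremal.

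Conversely, suppose $M$ and $N$ are perfect matchings with $\sgn M\ne\sgn N$. Then $M\mathbin{\triangle}N$ is a disjoint union of cycles $C_1,\dots,C_k$, each $M$-alternating, of lengths $2\ell_1,\dots,2\ell_k$. Since the cycles are vertex-disjoint, each remains alternating after flipping the others. Applying the formula successively gives
\[
\sgn N=(-1)^{\sum_j(\ell_j-1)}\sgn M .
\]
As the signs differ, some $\ell_j-1$ must be odd, so $\ell_j$ is even and $C_j$ has length divisible by four. This cycle is $M$-alternating, as required.

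The main point needing care is that each successive flip is legitimate. This holds because the $C_j$ are vertex-disjoint, so flipping one leaves the others $M$-alternating. In multigraphs, a 2-cycle consisting of two parallel edges gives $\ell=1$ and sign factor $+1$, which is consistent with the formula.
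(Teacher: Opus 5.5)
Your proof is correct and follows the paper's argument. The paper also observes that switching along $C$ composes the matching bijection with an $\ell$-cycle; it places the cycle on the matched images in $B$ rather than on $A$, which makes no difference to the sign. For the converse, the paper likewise decomposes $M\mathbin{\triangle}N$ into alternating cycles whose sign factors must have negative product, so one of them has length divisible by four.
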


\begin{proof}
Switching along $C$ induces an $\ell$-cycle on the matched images
of $A\cap V(C)$, so the matching sign is multiplied by
$(-1)^{\ell-1}$. In particular, a cycle of length divisible by four reverses
the sign. Conversely, the symmetric difference of two perfect matchings
of opposite signs is a union of alternating cycles whose sign changes
have negative product. One of these cycles therefore has length
divisible by four.
\end{proof}

To construct the bits, we contract a perfect matching and retain
the direction from $A$ to $B$ on the remaining edges. Alternating
cycles become directed cycles of half the length, so a cycle witnessing
non-det-extremality becomes even. We can alternate the bits around
this cycle and then extend the assignment to the remaining vertices.

\begin{lemma}\label{bip:strong-digraph}
Let $G$ be a connected simple cubic bipartite graph with partition
sets $A$ and $B$, and let $M=\{a_i b_i:i\in I\}$ be a perfect matching,
where $a_i\in A$ and $b_i\in B$. Let $D_M$ be the digraph with vertex
set $I$ and an arc $i\to j$ for every edge $a_i b_j\in E(G)\setminus M$.
Then $D_M$ is strongly connected. Contracting the edges of $M$ induces
a correspondence between $M$-alternating cycles of length $2\ell$ in
$G$ and directed cycles of length $\ell$ in $D_M$.
\end{lemma}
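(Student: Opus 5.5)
The lemma has two claims: strong connectivity of $D_M$, and the correspondence between alternating cycles and directed cycles. I will prove the correspondence first, since it is a direct unfolding of the definitions, and then deduce strong connectivity from a degree count.

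\emph{Correspondence.} Each vertex $i$ of $D_M$ has out-degree two and in-degree two. Its out-arcs are the two non-matching edges at $a_i$. Its in-arcs are the two non-matching edges at $b_i$. Because $G$ is simple, the edges at a vertex are distinct, but $D_M$ may contain a $2$-cycle $i\to j\to i$; this arises exactly when $a_ib_j$ and $a_jb_i$ are both edges. It never contains a loop, since $a_ib_i\in M$ is the only edge between $a_i$ and $b_i$.

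Now let $C$ be an $M$-alternating cycle of length $2\ell$. Orient it so that every non-matching edge runs from $A$ to $B$. This is consistent: each $M$-edge $a_ib_i$ on $C$ is traversed from $b_i$ to $a_i$, after which the next edge leaves $a_i$ towards some $b_j$, followed by the $M$-edge $b_ja_j$. Contracting the $\ell$ matching edges of $C$ therefore turns it into a closed directed walk $i_1\to i_2\to\dots\to i_\ell\to i_1$. The indices are distinct because $C$ visits each $a_i$ at most once, so this walk is a directed cycle of length $\ell$. Conversely, a directed cycle $i_1\to\dots\to i_\ell\to i_1$ lifts to the cycle $a_{i_1}b_{i_2}a_{i_2}b_{i_3}\cdots b_{i_1}a_{i_1}$ in $G$. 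Its vertices are distinct, and its edges alternate between non-matching edges and the $M$-edges $b_{i_k}a_{i_k}$, so it is $M$-alternating. The two maps are inverse to each other. The only case needing care is $\ell=2$: there a directed $2$-cycle lifts to a $4$-cycle, which is a genuine cycle because the two arcs come from distinct edges $a_{i_1}b_{i_2}$ and $a_{i_2}b_{i_1}$.

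\emph{Strong connectivity.} First, $D_M$ is weakly connected. Contracting the edges of $M$ in the connected graph $G$ gives a connected graph, and its edges are exactly the arcs of $D_M$ with orientation forgotten.

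Second, every vertex has in-degree equal to out-degree. A finite weakly connected digraph with this property is strongly connected. One way to see this: let $S$ be the set of vertices reachable from a fixed vertex, and suppose $S\neq I$. By weak connectivity some arc joins $S$ and $I\setminus S$. No arc leaves $S$, since its head would then be reachable. Hence some arc enters $S$. But summing out-degree minus in-degree over $S$ gives $0$, and this sum equals the number of arcs leaving $S$ minus the number entering $S$. So no arc enters $S$ either, a contradiction. Alternatively, one may cite that a connected balanced digraph is Eulerian.

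\emph{Main point of care.} The argument itself is short. The step needing most attention is the bookkeeping in the correspondence, namely checking that distinctness of vertices is preserved in both directions, including the degenerate case of $2$-cycles. Simplicity of $G$ rules out loops in $D_M$, and that is exactly what guarantees the lifted cycles have length at least four.
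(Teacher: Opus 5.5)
Your proof is correct and follows essentially the same route as the paper. Strong connectivity comes from weak connectivity of the contraction plus in/out-degree balance; you count arcs across the set reachable from a vertex, where the paper uses a source strong component, which is the same cut-counting idea. The correspondence is the same contraction and expansion of matching edges, with non-matching edges oriented from $A$ to $B$, and your handling of the $\ell=2$ case matches the paper's closing remark (no loops by simplicity; directed $2$-cycles correspond to alternating $4$-cycles).
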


\begin{proof}
Every vertex of $D_M$ has indegree and outdegree two, and its
underlying graph is connected. A source strongly connected component
has no outgoing arcs by degree balance, so it must contain every
vertex.

A directed cycle $i_0i_1\cdots i_{\ell-1}i_0$ expands to the
alternating cycle with edges $a_{i_t}b_{i_{t+1}}$ and
$b_{i_t}a_{i_t}$, with indices modulo $\ell$. Conversely, contracting
the matching edges of an alternating cycle gives a directed cycle,
oriented by its nonmatching edges from $A$ to $B$. The simplicity of
$G$ excludes loops; a pair of opposite arcs corresponds to an
alternating $4$-cycle.
\end{proof}

The condition needed by the colouring formula is that every vertex
in $B$ sees both bits. The equivalence in the next lemma is the
bipartite formulation of \cite[Theorem~3.5]{CKP2026}. Indeed, take
$A$ as the vertex set of a hypergraph and the neighbourhoods $N_G(b)$,
indexed by $b\in B$, as its hyperedges. Its incidence graph is $G$,
and a $2$-colouring of this hypergraph is exactly the required bit
assignment. We retain a direct constructive proof because the
assignment is used in Lemma~\ref{bip:colour-formula}.

\begin{lemma}\label{bip:mixed-bits}
Let $G$ be a connected simple cubic bipartite graph with partition
sets $A$ and $B$. Then $G$ is not det-extremal if and only if there exists
a map $s:A\to\{0,1\}$ such that
\[
 s(N_G(b))=\{0,1\}\qquad\text{for every }b\in B.
\]
\end{lemma}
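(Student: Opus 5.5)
Both directions go through the digraph $D_M$ of Lemma~\ref{bip:strong-digraph} and the sign criterion of Lemma~\ref{bip:sign-cycle}. Fix a perfect matching $M=\{a_ib_i\}$ and identify $A$ and $B$ with the index set $I$. Then $N_G(b_j)=\{a_j\}\cup\{a_i:i\to j\text{ in }D_M\}$. So a map $s:A\to\{0,1\}$ is mixed at $b_j$ unless $s(a_j)$ agrees with both in-neighbours of $j$ in $D_M$.

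\emph{Necessity.} Suppose such an $s$ exists. I would show that some perfect matching admits an alternating cycle of length divisible by four. The natural construction is a directed walk in $D_M$ built from $s$. At each $j$, at least one vertex of $N_G(b_j)$ carries the bit opposite to $s(a_j)$. If that vertex is an in-neighbour $i\to j$ of $D_M$, step back along this arc. In the remaining case both in-neighbours share the bit of $a_j$, which is impossible because $N_G(b_j)$ must see both bits. So every $j$ has an in-neighbour $i$ with $s(a_i)\ne s(a_j)$. Following such arcs backwards from any start, we eventually repeat a vertex. This yields a directed cycle in $D_M$ on which consecutive vertices have different bits, so its length $\ell$ is even. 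By Lemma~\ref{bip:strong-digraph} this is an $M$-alternating cycle of length $2\ell\equiv 0\pmod 4$, and Lemma~\ref{bip:sign-cycle} shows that $G$ is not det-extremal.

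\emph{Sufficiency.} By Lemma~\ref{bip:sign-cycle}, choose $M$ with an alternating cycle of length divisible by four. This gives an even directed cycle $Z$ in $D_M$, which we may take without chords issue mattering. Assign alternating bits around $Z$. Every $j\in Z$ then has an in-neighbour on $Z$ with the opposite bit, so $b_j$ is mixed. Since $D_M$ is strongly connected, every vertex can reach $Z$. Take a BFS in-tree towards $Z$ and process the vertices outside $Z$ in order of decreasing distance from $Z$. When we reach a vertex $i$ with chosen successor $j$ (an arc $i\to j$ with $j$ already assigned), set $s(a_i)\ne s(a_j)$. Then $b_j$ sees both bits through its in-neighbour $i$ and $a_j$.

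The remaining issue is whether every $b_j$ is mixed, and I expect this to be the main obstacle. Every $j$ that is the designated successor of some vertex is mixed, and so is every $j\in Z$. The leaves of the in-tree, however, may fail. If a leaf $j$ is not mixed, both its in-neighbours $i,i'$ share the bit of $a_j$, and we repair by flipping $s(a_j)$. That makes $b_j$ mixed, but it can break $b_k$ for the successor $k$ of $j$, and also affect the out-neighbours of $j$. To control this, I would order the assignment as a greedy extension that processes the vertices in reverse BFS order and chooses each bit to fix its own constraint. The aim is an invariant that at each step only the newly assigned vertex's own $b$ is constrained, using the freedom among the two out-arcs. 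If this becomes messy, the fallback is a potential argument: among all $s$ that alternate on $Z$, choose one minimising the number of monochromatic $b$'s, and derive a contradiction by flipping a suitable vertex, using strong connectivity to propagate the violation back to $Z$, where alternation forbids it.
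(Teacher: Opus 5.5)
Your argument for the direction ``such an $s$ exists $\Rightarrow$ $G$ is not det-extremal'' is correct and is the paper's argument. Every $j$ has an in-neighbour $i$ in $D_M$ with $s(a_i)\ne s(a_j)$. Following these arcs backwards closes a directed cycle on which the bits alternate, so the cycle is even and gives an $M$-alternating cycle of length divisible by four.

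The other direction has a genuine gap, which you notice but do not close. Since $N_G(b_j)=\{a_j\}\cup\{a_i: i\to j\}$, the constraint at $b_j$ involves $j$ and its \emph{in}-neighbours. Your in-tree towards $Z$ sets the bit of $i$ opposite to that of its out-neighbour $j$. This certifies $b_j$, not $b_i$. So every vertex outside $Z$ that is nobody's designated successor, such as a leaf of the in-tree, may end up with a monochromatic neighbourhood. You do not show that any of your repairs terminates in a valid assignment. Flipping a bit can break the successor's constraint, as you note. The greedy step again draws its freedom from the out-arcs, which is the wrong side. The potential argument is not carried out. In addition, processing vertices in \emph{decreasing} distance from $Z$ means the successor has not yet been assigned when you reach $i$.

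The missing idea is to reverse the direction of propagation. After alternating the bits on the even cycle $Z$, repeatedly choose an arc $i\to j$ with $i$ assigned and $j$ unassigned, and set $s(a_j)=1-s(a_i)$. In other words, grow an \emph{out}-arborescence from $Z$. Strong connectivity of $D_M$ makes every vertex reachable from $Z$, so all of $A$ gets assigned. Each $j\notin Z$ receives the bit opposite to an in-neighbour $i$, so $a_i,a_j\in N_G(b_j)$ already carry both bits. This persists because assigned bits are never changed. Each $j\in Z$ is handled by its predecessor on $Z$. No repair or extremal argument is needed, and this is exactly the paper's construction.
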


\begin{proof}
Suppose that $G$ is not det-extremal. By
Lemma~\ref{bip:sign-cycle}, choose a perfect matching $M$ and an
$M$-alternating cycle whose length is divisible by four.
We construct $s$ from these data. By
Lemma~\ref{bip:strong-digraph}, the corresponding cycle in $D_M$
has even length. Assign alternating bits $s(a_i)$ to its vertices $i$.
Whenever an arc $i\to j$ has an assigned tail and an unassigned head,
extend the assignment by setting
$s(a_j)=1-s(a_i)$. Strong connectivity guarantees that this procedure
reaches all vertices. Each vertex $j$ has a predecessor $i$ on the initial
cycle or in this extension with $s(a_i)\ne s(a_j)$. Since both $a_i$ and
$a_j$ are neighbours of $b_j$, every neighbourhood in $B$ contains
both bits.

Conversely, fix a perfect matching $M$. For each $j$, choose an
incoming arc $i\to j$ such that $s(a_i)\ne s(a_j)$; such an arc
exists because $a_j\in N_G(b_j)$ and this neighbourhood contains both
bits. The chosen arcs form a spanning digraph of indegree one, which
contains a directed cycle. The bits alternate on that cycle, so its
length is even. Lemmas~\ref{bip:strong-digraph} and~\ref{bip:sign-cycle}
imply that $G$ is not det-extremal.
\end{proof}

Use the colour set
\[
 \Omega=\{(i,t):i\in\{0,1,2\},\ t\in\{0,1\}\}
\]
and index the ports at each vertex by their types. Thus $c_v(i)$ is
the external colour at the type-$i$ port, $h_v(i)$ is the colour of
the opposite internal edge, and $P_v(i)=\{h_v(j):j\ne i\}$.
At each vertex, complement the bit in each external colour to obtain
the internal palette. To ensure compatibility, place these colours
opposite ports of the same type at vertices in $A$. At a vertex in $B$,
start with this placement and swap the colours opposite the two ports
whose external colours carry the majority bit. This places each colour
shared by adjacent internal palettes opposite their connecting edge
at one or both endpoints.

\begin{lemma}\label{bip:colour-formula}
Let $G$ be a connected simple cubic bipartite graph with partition
sets $A$ and $B$, let $\tau:E(G)\to\{0,1,2\}$ be a proper edge-colouring,
and let $s:A\to\{0,1\}$ satisfy
$s(N_G(b))=\{0,1\}$ for every $b\in B$.
For an edge $ab$ of type $i$, with $a\in A$, set
\begin{equation}\label{bip:external-formula}
 c(ab)=(i,s(a)).
\end{equation}
At $a\in A$, put
\begin{equation}\label{bip:source-formula}
 h_a(i)=(i,1-s(a)).
\end{equation}
At $b\in B$, let $a_i$ be its type-$i$ neighbour and set $t_i=s(a_i)$.
The triple $(t_0,t_1,t_2)$ has a unique minority coordinate, namely
the position of the bit that occurs once. Let $\pi_b$ fix this
coordinate and interchange the other two. Put
\begin{equation}\label{bip:sink-formula}
 h_b(i)=(\pi_b(i),1-t_{\pi_b(i)}).
\end{equation}
Then these assignments define a strong $6$-edge-colouring of $T(G)$.
\end{lemma}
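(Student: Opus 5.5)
The plan is to verify the hypotheses of Lemma~\ref{pre:interface} in its second formulation~\eqref{pre:hidden}, with $c$ defined by~\eqref{bip:external-formula} and the bijections $h_v$ given by~\eqref{bip:source-formula} and~\eqref{bip:sink-formula}.

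\textbf{Step 1: $c$ is proper.} Since $\tau$ is proper, the three edges at any vertex have distinct types. Every colour $c(ab)=(i,s(a))$ has type $i$ as its first coordinate, so the three external colours at each vertex are distinct.

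\textbf{Step 2: the internal palettes and the maps $h_v$.} At $a\in A$ the external colours are $(i,s(a))$ for $i=0,1,2$. Hence
\[
I_a=\{(i,1-s(a)):i=0,1,2\},
\]
and~\eqref{bip:source-formula} is a bijection from the ports onto $I_a$. At $b\in B$ the external colours are $(i,t_i)$, so
\[
I_b=\{(j,1-t_j):j=0,1,2\}.
\]
Since $\pi_b$ is a permutation, \eqref{bip:sink-formula} maps the ports bijectively onto $I_b$. The hypothesis $s(N_G(b))=\{0,1\}$ means $(t_0,t_1,t_2)$ is not constant. Thus exactly one bit value occurs once, and the minority coordinate is well defined.

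\textbf{Step 3: the interface condition.} Take an edge $e=ab$ of type $i$, so $s(a)=t_i$. An element $(j,1-t_j)$ of $I_b$ lies in $I_a$ exactly when $t_j=t_i$. Therefore
\[
I_a\cap I_b=\{(j,1-t_j): t_j=t_i\}.
\]
The element with $j=i$ is $h_a(i)=(i,1-t_i)$, so it is always covered. Two cases remain.
\begin{itemize}
\item If $i$ is the minority coordinate, no other $j$ has $t_j=t_i$, and the intersection is just $\{h_a(i)\}$.
\item If $i$ is a majority coordinate, there is exactly one other index $k$ with $t_k=t_i$, namely the other majority coordinate. By construction $\pi_b(i)=k$, so $h_b(i)=(k,1-t_k)$, which is precisely the remaining element of the intersection.
\end{itemize}
In both cases $I_a\cap I_b\subseteq\{h_a(e),h_b(e)\}$. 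Lemma~\ref{pre:interface} then yields a strong $6$-edge-colouring of $T(G)$.

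The only point requiring care is this case analysis at $B$. The rest is bookkeeping, with the mixed-bit hypothesis used exactly to guarantee that at most two coordinates share a bit, so that a single transposition suffices.
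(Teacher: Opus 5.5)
Your proof is correct and follows the paper's argument essentially verbatim. Like the paper, you check that every local state uses all six colours, compute $I_a\cap I_b=\{(j,1-t_i):t_j=t_i\}$ for an edge of type $i$, and split into the minority and majority cases, using $\pi_b(i)=k$ to cover the second common colour before applying Lemma~\ref{pre:interface} in the form~\eqref{pre:hidden}. Your explicit checks that $c$ is proper and that the minority coordinate is well defined are details the paper leaves implicit.
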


\begin{proof}
At each $a\in A$, the external and internal palettes are
$\{(i,s(a)):i=0,1,2\}$ and $\{(i,1-s(a)):i=0,1,2\}$. At each
$b\in B$, the internal palette is obtained by complementing the bit
in each external colour. Hence every local state uses
the six colours exactly once. It remains to check compatibility
between adjacent triangles. By~\eqref{pre:hidden}, every colour
common to their internal palettes must lie opposite the connecting
edge at one or both endpoints.

For an edge $a_i b$ of type $i$, the intersection of its internal
palettes is
\[
 I_{a_i}\cap I_b=\{(j,1-t_i):t_j=t_i\}.
\]
If $i$ is the minority coordinate, this intersection is
$\{(i,1-t_i)\}$, and its colour is opposite the edge at both ends.
Otherwise, let $j$ be the other majority coordinate. The intersection
is $\{(i,1-t_i),(j,1-t_i)\}$. Its first colour is opposite the edge
at $a_i$, and its second is opposite the edge at $b$, since
$\pi_b(i)=j$. Thus~\eqref{pre:hidden} holds on every edge, and
Lemma~\ref{pre:interface} completes the proof.
\end{proof}

\begin{example}\label{bip:binary-example}
If the neighbour bits at $b$ are $(t_0,t_1,t_2)=(0,0,1)$, then
\[
\begin{array}{c|ccc}
\text{port }i&0&1&2\\ \hline
c_b(i)&(0,0)&(1,0)&(2,1)\\
h_b(i)&(1,1)&(0,1)&(2,0)
\end{array}
\]
Here $\pi_b$ interchanges $0,1$ and fixes $2$. Thus one first
complements the bits of the three external colours and then exchanges
the first two resulting colours; the minority position stays fixed.
\end{example}

The matching used to construct $s$ and the edge-colouring $\tau$ may
be chosen independently. Lemmas~\ref{bip:mixed-bits}
and~\ref{bip:colour-formula} prove Theorem~\ref{bip:main} when $G$ is
not det-extremal.

For multigraphs, define matching signs as before, distinguishing
parallel edges. Then $|\det B_G|=\per B_G$ still characterises
det-extremality when $B_G$ records edge multiplicities.

\begin{corollary}\label{bip:multigraph-nondet}
Let $G$ be a connected loopless cubic bipartite multigraph that is
not det-extremal. Then $\sidx(T(G))=6$.
\end{corollary}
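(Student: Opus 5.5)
The plan is to repeat the simple-graph argument word for word and check where simplicity entered. There are three ingredients: the proper $3$-edge-colouring $\tau$, the bit assignment $s$ of Lemma~\ref{bip:mixed-bits}, and the colour formula of Lemma~\ref{bip:colour-formula}.

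\textbf{The types $\tau$.} These exist by the remark following their construction. A cubic bipartite multigraph has a perfect matching, and its complement is a union of even cycles, possibly of length two. Since parallel edges are distinguished, the types label the three incidences at each vertex bijectively. This is all that formulas~\eqref{bip:external-formula}--\eqref{bip:sink-formula} use.

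\textbf{The colour formula.} In the multigraph setting I read the condition on $s$ as a condition on incidences. For each $b\in B$, the multiset of bits $(t_0,t_1,t_2)$ of the neighbours at its three ports must contain both values. Here $t_i=s(a_i)$, where $a_i$ is the endpoint of the type-$i$ edge at $b$, and the $a_i$ need not be distinct. With this reading, the proof of Lemma~\ref{bip:colour-formula} goes through verbatim. It only computes $I_{a_i}\cap I_b$ edge by edge and checks \eqref{pre:hidden}, and Lemma~\ref{pre:interface} is stated for loopless cubic multigraphs. There is one point to check. If $a$ and $b$ are joined by two parallel edges of types $i$ and $j$, then $t_i=t_j=s(a)$. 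Both edges are then majority coordinates, provided the third bit differs, which is forced. The check for each of the two edges separately is exactly the one already in the proof, so nothing changes.

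\textbf{The bit assignment.} This is the main step: producing $s$ from non-det-extremality. Lemma~\ref{bip:sign-cycle} is purely about matchings, so with parallel edges distinguished it applies unchanged. It gives a perfect matching $M$ and an $M$-alternating cycle of length divisible by four. In Lemma~\ref{bip:strong-digraph}, the digraph $D_M$ may now have parallel arcs, and loops from an edge parallel to a matching edge $a_ib_i$. Degree balance still gives strong connectivity. The loops are harmless: a loop at $i$ gives an alternating $2$-cycle, of length $2\not\equiv0\pmod 4$, so the witnessing cycle corresponds to a directed cycle of even length $\ell\ge2$. Possibly $\ell=2$, coming from a pair of opposite arcs. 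I then alternate bits on that cycle and extend along arcs $i\to j$ by $s(a_j)=1-s(a_i)$, as in the proof of Lemma~\ref{bip:mixed-bits}. Strong connectivity ensures every vertex is reached through a non-loop arc. Each $b_j$ then has the two neighbour incidences from $a_j$ (via the matching edge) and from $a_i$ (via the arc), and these carry different bits.

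The only real obstacle I anticipate is this last verification: that loops in $D_M$ never serve as the predecessor arc. They cannot, because a loop has its tail equal to its head, which is already assigned. Once that is checked, Lemma~\ref{bip:colour-formula} together with Lemma~\ref{pre:interface} gives a strong $6$-edge-colouring, and \eqref{pre:lowerbound} gives equality. The connectivity hypothesis is needed only for strong connectivity of $D_M$.
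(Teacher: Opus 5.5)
Your proposal is correct and follows the same route as the paper. The paper's proof also notes four points: $\tau$ still exists; the sign identity and the strong connectivity of $D_M$ survive loops and parallel arcs; loops cannot lie on the alternating-bit cycle; and the formula and interface checks of Lemma~\ref{bip:colour-formula} are unchanged because the types distinguish incidences. Your version adds detail, namely the explicit check for two parallel edges at a vertex of $B$ and the remark that only the forward direction of Lemma~\ref{bip:mixed-bits} is needed, but it is the same argument.
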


\begin{proof}
Perfect matchings and proper $3$-edge-colourings exist as before.
The sign identity in Lemma~\ref{bip:sign-cycle} also holds for
alternating cycles of length two, since exchanging parallel edges
preserves the induced bijection. The digraph $D_M$ may have loops
and parallel arcs, but each vertex still has indegree and outdegree
two and its underlying graph is connected. The strong-connectivity
argument in Lemma~\ref{bip:strong-digraph} still applies. Loops
correspond to alternating $2$-cycles and cannot lie on a cycle with
alternating bits, so both directions of Lemma~\ref{bip:mixed-bits}
remain valid.

Finally, the three types distinguish incidences even when neighbours
coincide. The formula and interface checks of
Lemma~\ref{bip:colour-formula} are unchanged, and the local lower
bound~\eqref{pre:lowerbound} gives equality.
\end{proof}

The additional difficulties in the det-extremal case are discussed
in Section~\ref{concl:section}.

\subsection{Rooted replacements and Heawood vertex-sums}

For a det-extremal graph, Lemma~\ref{bip:mixed-bits} rules out a
binary assignment that makes every neighbourhood in $B$ mixed.
Removing one vertex from $B$ changes the task: we require mixed
neighbourhoods only at the remaining vertices of $B$, and the three
new boundary signatures must reproduce a prescribed local state.
The following construction achieves both requirements.

For a loopless cubic multigraph $H$ and $v\in V(H)$, write $H-v$ for the
$3$-pole obtained by deleting $v$ and retaining its three incident
half-edges. Its port signatures refer to the retained endpoints, as
in Section~\ref{pre:section}.

\begin{definition}\label{bip:simulator-definition}
Let $\mathcal P$ be a cubic $3$-pole. We call $\mathcal P$ an \emph{exact vertex
simulator} if, for every ordering of its three ports and every choice
of six distinct colours $c_0,c_1,c_2,h_0,h_1,h_2$, its augmented
truncation admits a strong edge-colouring with these six colours whose
port signatures, in the prescribed order, are
\begin{equation}\label{bip:vertex-signatures}
 (c_0,\{h_1,h_2\}),\qquad
 (c_1,\{h_0,h_2\}),\qquad
 (c_2,\{h_0,h_1\}).
\end{equation}
\end{definition}

These are the signatures of a canonical triangle with state
$(c_0,c_1,c_2;h_0,h_1,h_2)$. Thus a simulator can be substituted for
that triangle without changing any of the data seen by the rest of
the coloured graph. Requiring every port order allows any prescribed
attachment, rather than only one fixed correspondence.

\begin{theorem}\label{bip:rooted-simulator}
Let $H$ be a connected simple cubic bipartite det-extremal graph,
and let $v\in V(H)$. Then the $3$-pole $H-v$ is an exact vertex
simulator.
\end{theorem}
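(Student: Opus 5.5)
The plan is to reuse the colouring formula of Lemma~\ref{bip:colour-formula}. We replace its global mixed-bit hypothesis by a rooted one: every vertex of $B$ other than $v$ sees both bits, while the three neighbours of $v$ all carry the same bit. Since the bipartition plays a symmetric role and det-extremality does not depend on which side is called $A$, we may assume $v\in B$. Fix a perfect matching $M=\{a_ib_i:i\in I\}$ with $v=b_r$, and a proper edge-colouring $\tau$ of $H$ by types $0,1,2$. The neighbours of $v$ are $a_r$ and the two vertices $a_p,a_q$ with arcs $p\to r$, $q\to r$ in $D_M$.

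\emph{Step 1: the rooted bit assignment.} By Lemma~\ref{bip:strong-digraph}, $D_M$ is strongly connected, so it has a spanning out-branching rooted at $r$. Let $s(a_j)$ be the parity of the depth of $j$ in this branching. Every $j\neq r$ then has its parent $i$ as an in-neighbour with $s(a_i)\neq s(a_j)$. Since $a_i,a_j\in N_H(b_j)$, every $b_j\neq v$ has a mixed neighbourhood. For the in-neighbour $p$ of $r$, the tree path $r\to\cdots\to p$ followed by the arc $p\to r$ is a directed cycle of length $\operatorname{depth}(p)+1$ (simplicity excludes $p=r$). By Lemma~\ref{bip:sign-cycle} and det-extremality, every $M$-alternating cycle has length $\equiv 2\pmod 4$, so by Lemma~\ref{bip:strong-digraph} every directed cycle of $D_M$ is odd. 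Hence $\operatorname{depth}(p)$ is even, and the same holds for $q$. Therefore $s(a_p)=s(a_q)=s(a_r)=0$.

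\emph{Step 2: colouring the pole.} Apply formulas \eqref{bip:external-formula}--\eqref{bip:sink-formula} at all vertices of $A$ and of $B\setminus\{v\}$, including on the dangling edges at the neighbours of $v$. The verification in Lemma~\ref{bip:colour-formula} is local: each state uses all six colours, and \eqref{pre:hidden} on an edge $a_ib$ only uses the minority structure at $b$. It therefore goes through verbatim on every ordinary edge, since all these edges avoid $v$. By the pole version of Lemma~\ref{pre:interface}, this gives a strong colouring of the augmented truncation of $H-v$.

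\emph{Step 3: reading off the port signatures and relabelling.} Let $a^{(i)}$ be the neighbour of $v$ joined by the type-$i$ edge. Its external colour is $(i,0)$, and its exposed pair is $\{(j,1):j\neq i\}$. This is exactly the triangle pattern \eqref{bip:vertex-signatures} with $c_i=(i,0)$ and $h_i=(i,1)$. Now suppose a port order and six distinct colours $c_0,\dots,c_2,h_0,\dots,h_2$ are prescribed. Let $\sigma$ be the bijection sending the $k$-th prescribed port to its type. Rename colours by $(\sigma(k),0)\mapsto c_k$ and $(\sigma(k),1)\mapsto h_k$. Such a renaming of $\Omega$ preserves strongness, so it yields the required signatures. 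Hence $H-v$ is an exact vertex simulator.

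The main obstacle is Step~1, namely forcing the three neighbour bits of $v$ to agree while keeping every other neighbourhood in $B$ mixed. The parity of the out-branching, combined with the fact that det-extremality makes all directed cycles of $D_M$ odd, is the point I expect to carry the argument. The rest is a direct reuse of Lemma~\ref{bip:colour-formula}.
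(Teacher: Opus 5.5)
Your proposal is correct and follows essentially the same argument as the paper. The paper also builds an outgoing spanning arborescence of $D_M$ rooted at $r$ with depth-parity bits, uses the oddness of all directed cycles to force bit $0$ on $N_H(v)$, and applies the colouring formula to $H-v$. It then finishes with the same colour permutation $(i,0)\mapsto c_i$, $(i,1)\mapsto h_i$, after naming the types at $v$ by the prescribed port order.
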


\begin{proof}
We use a rooted tree to assign the bits. Its predecessor relations
will give mixed neighbourhoods away from the deleted vertex, and the
absence of even directed cycles will determine the boundary bits.
Choose the bipartition so that $v\in B$, and a perfect matching
$M=\{a_i b_i:i\in I\}$ with $v=b_r$. By
Lemmas~\ref{bip:strong-digraph} and~\ref{bip:sign-cycle}, $D_M$ is
strongly connected and every directed cycle in $D_M$ is odd. Choose
an outgoing spanning arborescence $\mathcal T$ rooted at $r$: a
spanning directed tree with a unique directed path from $r$ to each
vertex. Set $s(a_i)=0$ if the path from $r$ to $i$ in $\mathcal T$
has even length, and $s(a_i)=1$ otherwise.
For every $j\ne r$, its predecessor $i$ in $\mathcal T$ satisfies
$s(a_i)\ne s(a_j)$, and $a_i,a_j$ are both neighbours of $b_j$.
Hence $s(N_H(b))=\{0,1\}$ for all $b\in B\setminus\{v\}$.

For each incoming arc $i\to r$, the tree path from $r$ to $i$
followed by that arc is a directed cycle. Its odd length forces $i$
to have even tree depth. Since $s(a_r)=0$, it follows that
\begin{equation}\label{bip:root-bits}
 s(a)=0\quad\hbox{for all }a\in N_H(v).
\end{equation}

The deleted vertex is therefore the only vertex in $B$ whose
neighbourhood is monochromatic under this assignment. Its removal
leaves exactly the conditions required by the colouring formula.
Choose a proper $3$-edge-colouring $\tau$ of $H$ and apply
\eqref{bip:external-formula}--\eqref{bip:sink-formula} to $H-v$.
The proof of Lemma~\ref{bip:colour-formula} verifies every ordinary
interface. By~\eqref{bip:root-bits}, the signatures of the dangling
ports, indexed by their types, are
\[
 \bigl((i,0),\{(j,1):j\ne i\}\bigr),\qquad i=0,1,2.
\]
Name the three types at $v$ according to the prescribed port order.
The permutation $(i,0)\mapsto c_i$, $(i,1)\mapsto h_i$ gives
\eqref{bip:vertex-signatures}.
\end{proof}

\begin{lemma}\label{bip:vertex-substitution}
Let $K$ be a loopless cubic multigraph, let $\varphi$ be a strong
$6$-edge-colouring of $T(K)$, and let $x\in V(K)$. Let $\mathcal P$ be an
exact vertex simulator, and let $K'$ be obtained by replacing $x$
with $\mathcal P$ according to any bijection between the three incidences at
$x$ and the three ports of $\mathcal P$. Then $T(K')$ admits a strong
$6$-edge-colouring that agrees with $\varphi$ outside the replacement.
\end{lemma}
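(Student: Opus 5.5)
The plan is to read off the local state of $\varphi$ at $x$ and ask the simulator to reproduce exactly the signatures of that state. By Lemma~\ref{pre:interface}, the rest of the colouring then never notices the replacement.

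Let $e_0,e_1,e_2$ be the three incidences at $x$ in $K$. Let the local state of $\varphi$ at $x$ be $(c_0,c_1,c_2;h_0,h_1,h_2)$, where $c_i=c_x(e_i)$ and $h_i=h_x(e_i)$; these are six distinct colours. The signature of the port of $e_i$ at $x$ is $(c_i,P_x(e_i))$ with $P_x(e_i)=\{h_j,h_k\}$ for $\{i,j,k\}=\{0,1,2\}$. Let $p_0,p_1,p_2$ be the ports of $\mathcal P$ attached to $e_0,e_1,e_2$ under the given bijection. Since $\mathcal P$ is an exact vertex simulator and the definition allows every port order, Definition~\ref{bip:simulator-definition} applied to the order $(p_0,p_1,p_2)$ gives a strong colouring $\psi$ of the augmented truncation of $\mathcal P$. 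Its dangling-port signatures are $(c_i,\{h_j,h_k\})$, exactly as in~\eqref{bip:vertex-signatures}.

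Next define local states on $K'$. Every vertex of $K$ other than $x$ keeps its state from $\varphi$, and every vertex of $\mathcal P$ takes its state from $\psi$. This is legitimate because the pendant edge at $p_i$ in the augmented truncation receives colour $c_i$ under $\psi$. That matches the colour of the external edge of $e_i$ under $\varphi$, so the external colour of the new edge of $K'$ is well defined.

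Now check~\eqref{pre:exposed} on every edge of $K'$, of which there are three kinds.
\begin{itemize}
\item Edges of $K$ not incident with $x$ hold because $\varphi$ is strong.
\item Ordinary edges of $\mathcal P$ hold because $\psi$ is strong, using the pole version of Lemma~\ref{pre:interface}.
\item For the edge joining $e_i$'s far endpoint $u$ to the retained port at $p_i$, the data at $u$ are unchanged. The data at $p_i$ coincide with the former signature at $x$, namely $(c_i,\{h_j,h_k\})$. The condition $c_u(e_i)=c_i$ and $P_u(e_i)\cap\{h_j,h_k\}=\varnothing$ already held in $\varphi$, so it still holds.
\end{itemize}

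One subtle point needs care: $e_i$ may be a parallel edge, or several $e_i$ may go to the same neighbour. In that case one must use incidences rather than neighbours, but the argument above is purely per incidence. The case of a loop does not arise, since $K$ is loopless, and $K'$ is then loopless and cubic.

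By Lemma~\ref{pre:interface}, the combined states define a strong $6$-edge-colouring of $T(K')$ that agrees with $\varphi$ outside the replacement. There is no real obstacle; the only care needed is matching the port order in Definition~\ref{bip:simulator-definition} to the given bijection.
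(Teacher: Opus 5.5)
Your proposal is correct and follows the paper's proof. You read off the local state at $x$ in attachment order, realise the signatures \eqref{bip:vertex-signatures} in $\mathcal P$ with the port order matching the bijection, and then apply Lemma~\ref{pre:interface}: the outside interfaces see unchanged signatures, and the pole's own interfaces are valid because its colouring is strong. Your per-incidence treatment of parallel edges only spells out what the paper's shorter argument already covers.
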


\begin{proof}
Write the local state at $x$, in attachment order, as
$(c_0,c_1,c_2;h_0,h_1,h_2)$. Realise
\eqref{bip:vertex-signatures} in the simulator. The new signatures
equal the old ones at $x$, so every outside interface remains valid.
The internal interfaces of the pole are valid by definition.
Lemma~\ref{pre:interface} gives the required colouring.
\end{proof}

A \emph{vertex-sum} of disjoint cubic graphs $X,Y$ at vertices $x,y$
is obtained by deleting $x,y$ and joining their neighbour sets by a
bijection. If $X,Y$ are bipartite, choose their bipartitions so that
the new edges join opposite parts. We use the following
characterisation of McCuaig~\cite{McCuaig2000}; see
also~\cite{FJLS2003} and~\cite[Theorem~1.1]{CKP2026}.
\begin{theorem}\label{bip:mccuaig}
Let $G$ be a simple $3$-connected cubic bipartite graph. Then $G$ is
det-extremal if and only if it can be obtained from copies of the
Heawood graph by repeated vertex-sums.
\end{theorem}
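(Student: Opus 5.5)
The statement is McCuaig's theorem, and in the paper it is invoked from \cite{McCuaig2000}, so it is not re-proved. If I had to establish it, I would prove the two directions separately. The \emph{if} direction is elementary. The \emph{only if} direction reduces to the structure theory of Pfaffian braces, whose central step I would cite rather than re-derive.

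\emph{Sufficiency.} First I would check that the Heawood graph $H_{14}$ is det-extremal. By Lemma~\ref{bip:sign-cycle} this amounts to showing that no perfect matching has an alternating cycle of length $4$ or $12$; equivalently, $|\det B|=\per B=24$ for its point--line incidence matrix over the Fano plane. Next I would show that vertex-sums preserve det-extremality. Let $G$ be the vertex-sum of $X,Y$ at $x,y$. Every perfect matching of $G$ uses exactly one new edge, say the one corresponding to $x x_k$ and $y y_{\sigma(k)}$, because the three new edges form a $3$-edge cut with an odd shore. Such a matching is therefore the union of a matching of $X$ containing $xx_k$ and a matching of $Y$ containing $yy_{\sigma(k)}$, with the two dangling edges identified. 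With orders on the bipartition classes chosen compatibly, its sign is $\pm\sgn(M_X)\sgn(M_Y)$ with a global constant independent of $k$. Hence all signs agree if they agree in $X$ and in $Y$. Induction on the number of sums finishes this direction.

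\emph{Necessity.} Let $G$ be simple, $3$-connected, cubic, bipartite and det-extremal. I would induct on $|V(G)|$. If $G$ has a nontrivial cyclic $3$-edge cut, both sides give cubic bipartite graphs $X,Y$ once each shore is closed with a new vertex, and $G$ is their vertex-sum. Each of $X$ and $Y$ inherits $3$-connectivity. Each also inherits det-extremality, because every matching of $X$ extends to a matching of $G$ through $Y$ and the sign relation above runs both ways. So I may assume $G$ is cyclically $4$-edge-connected, and then $G$ is a brace. Det-extremality says the $0/1$ biadjacency matrix is itself a Pfaffian signing, so $G$ is a Pfaffian brace. By the Robertson--Seymour--Thomas/McCuaig theorem, $G$ is then either $H_{14}$, or planar, or a trisum of smaller Pfaffian braces. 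In the cubic case the trisum alternative cannot occur without a cyclic $3$-edge cut, or it collapses to a vertex-sum.

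It remains to exclude the planar case. For a planar brace, the trivial signing is Pfaffian only if every face has length $\equiv 2\pmod 4$, by Kasteleyn's face criterion together with the fact that each face of a brace is alternating for some perfect matching. But Euler's formula for a cubic plane graph gives $\sum_f(6-|f|)=12>0$. If every face had length $\equiv2\pmod4$, every face would have length at least $6$ and the sum would be $\le0$, a contradiction.

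The main obstacle is the brace structure theorem itself, together with making rigorous that each face of a planar cubic brace is alternating for some matching. I would cite the former and argue the latter using the matching-covered and brace properties.
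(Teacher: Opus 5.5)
The paper does not prove this statement; it cites McCuaig. Your route through Pfaffian braces is a legitimate alternative in principle, and the sufficiency half is sound apart from two slips. First, the Heawood graph has $8$-cycles, so the alternating lengths to exclude are $8$ and $12$, not $4$ and $12$; $|\det B|=\per B=24$ is the clean check. Second, parity alone only shows that a perfect matching of the vertex-sum uses one or three new edges. ``Exactly one'' needs the bipartite count: the neighbours of $x$ lie in the colour class of $X-x$ that has one extra vertex. The necessity half, however, has two genuine gaps.

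\textbf{First gap: inheritance across a $3$-edge cut.} Det-extremality of the two sides does not follow from $\sgn(M)=\varepsilon\,\sgn(M_X)\sgn(M_Y)$. Fixing one $M_Y$, you can show that all perfect matchings of $X$ through $xx_k$ share a sign $f_X(k)$, and likewise $f_Y(k)$ on $Y$. Det-extremality of $G$ then gives only that $f_X(k)f_Y(k)$ is independent of $k$. To compare matchings of $X$ through different edges at $x$, you must use matchings of $Y$ through different edges at $y$, whose signs you do not control. So both sides can be twisted in the same way, e.g.\ $f_X=f_Y=(+,+,-)$.

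This really happens once $3$-connectivity is dropped, and your argument never uses $3$-connectivity. Join two copies of the Heawood graph minus an edge crosswise by edges $g_1,g_2$, as in Lemma~\ref{bip:edge-sum}, to obtain $R$. Every alternating cycle of $R$ through $g_1$ has length $\equiv0\pmod4$, and every other one has length $\equiv2\pmod4$. Hence, for an endpoint $x$ of $g_1$, the vertex-sum of $R$ with a copy of itself at $x$ (matching corresponding neighbours) is det-extremal, although $R$ is not. For $3$-connected $G$ the inheritance does hold. But it is equivalent to the nonexistence of a $3$-connected cubic bipartite graph with a Pfaffian signing having exactly one negative edge. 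That is a substantial statement which your sign formula does not reach.

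\textbf{Second gap: the trisum case.} Your dismissal of this case is false. In a cubic trisum of braces $G_1,G_2,G_3$ along a $4$-cycle $C$, all edges of $C$ are deleted and the $G_i$ are cubic, but no cyclic $3$-edge cut need appear. For example, glue three cubes along a common face and delete the four face edges. The result is a $16$-vertex cubic bipartite brace with the following properties:
\begin{itemize}
\item it is cyclically $4$-edge-connected;
\item it is nonplanar, since contracting the three $4$-cycles opposite the glued face yields $K_{4,3}$;
\item it is Pfaffian, being a trisum of planar braces.
\end{itemize}
It happens not to be det-extremal, since each of those $4$-cycles is alternating for a suitable perfect matching. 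Still, you need an actual argument that no cubic trisum brace is det-extremal.

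By contrast, the planar case can avoid the face-resonance fact you flagged. Euler's formula gives a $4$-face, and since a brace is $2$-extendable, two opposite edges of that face extend to a perfect matching for which the face is alternating.
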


McCuaig's characterisation explains why we start from the Heawood
graph. Once its truncation is coloured, Theorem~\ref{bip:rooted-simulator}
supplies the rooted pieces for subsequent vertex-sums, without
requiring colourings of their entire truncations. We first give
the initial Heawood colouring.

Let $\mathbb H$ be the Heawood graph with vertices $A_i,B_i$, indexed by
$\mathbb{Z}_7$, and edges
\[
 A_iB_i,\qquad A_iB_{i+1},\qquad A_iB_{i+3}.
\]
Give these edges types $0,1,2$, respectively. At $A_i$ the port order
is $(B_i,B_{i+1},B_{i+3})$; at $B_i$ it is
$(A_i,A_{i-1},A_{i-3})$. This specifies a proper edge-colouring by types.

\begin{lemma}\label{bip:heawood-seed}
Let $\mathbb H$ be the Heawood graph. Then $T(\mathbb H)$ admits a
strong $6$-edge-colouring.
\end{lemma}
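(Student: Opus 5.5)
The Heawood graph $\mathbb H$ is det-extremal, so Lemma~\ref{bip:mixed-bits} rules out the uniform binary formula of Lemma~\ref{bip:colour-formula}. I would therefore give an explicit list of $14$ local states, one at each $A_i$ and $B_i$, and verify the criterion~\eqref{pre:hidden} of Lemma~\ref{pre:interface} on each of the $21$ edges.

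To find the table, I would start from the rooted assignment in the proof of Theorem~\ref{bip:rooted-simulator}. Take the matching $M=\{A_iB_i\}$ of type-$0$ edges and root at $v=B_0$. This gives a bit map $s$ on the $A_i$ for which every $N(B_j)$ with $j\ne0$ is mixed, while~\eqref{bip:root-bits} forces $s\equiv0$ on $N(B_0)=\{A_0,A_6,A_4\}$. Formulas \eqref{bip:external-formula}--\eqref{bip:sink-formula} then give valid local states at all $14$ vertices except $B_0$. They also verify every interface except the three edges at $B_0$. There the internal palettes $I_{A_i}$ and $I_{B_0}$ all equal $\{(j,1)\}$, so each intersection has three colours and~\eqref{pre:hidden} fails.

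The repair is local. I would keep the formula states on the vertices at distance at least $3$ from $B_0$. I would then recolour the external edges and the $h$-bijections on the ball of radius $2$ around $B_0$. This ball consists of $B_0$, its three $A$-neighbours, and the six further $B$-vertices adjacent to them. In the recolouring I would drop the product structure and let the external colours at $B_0$ use two bits of mixed types. The aim is that each internal palette at $B_0$ meets each neighbouring palette in at most two colours. The free choices of the $h$-bijections can then place the shared colours opposite the connecting edges, as in the majority-swap rule before Lemma~\ref{bip:colour-formula}. Every modified vertex must still see six distinct colours, and on each boundary edge of the ball the signature must agree with the untouched formula state. By Lemma~\ref{pre:interface} it suffices to match signatures $(c,P)$ there.

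The main obstacle is this finite search for the ball. Changing an external colour at $B_0$ propagates to $A_0,A_6,A_4$ and then to their other $B$-neighbours. I expect a consistent choice to exist, because the only failure of the formula is at a single vertex, but it must be found by hand or by a short computer search. The final proof would then consist of the $14$-row table of states, in the abbreviated notation $(c_v(e_0),c_v(e_1),c_v(e_2);h_v(e_0),h_v(e_1),h_v(e_2))$, followed by a check of~\eqref{pre:exposed} on all $21$ edges. The checks at ports untouched by the repair are covered by the proof of Lemma~\ref{bip:colour-formula}, so only the edges incident with the ball need to be checked explicitly.
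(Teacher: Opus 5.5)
\paragraph{Verdict.} There is a genuine gap. The lemma is a finite existence claim, so its proof is the certificate, and your proposal ends at ``I expect a consistent choice to exist''.

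\paragraph{The ``local'' repair is really global.} The heuristic that the formula fails only at $B_0$ does not support the expectation. Since $\mathbb H$ has diameter $3$, your radius-$2$ ball contains $10$ of the $14$ vertices: all of $B_0,\dots,B_6$ together with $A_0,A_4,A_6$. Every one of the $21$ edges meets it, so you would still have to check all of them.

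\paragraph{The rooted construction cannot be closed up at $B_0$.} By construction, the dangling ports of $\mathbb H-B_0$ carry the signatures of a single canonical triangle. Attaching $B_0$ compatibly to them is therefore the same as colouring $T(M_0)$ with six colours, which is impossible because $T(M_0)$ needs nine. Equivalently, substituting $\mathbb H-v$ for a vertex $x$ of some $K$ returns $\mathbb H$ only when $K=M_0$. So any repair must change the colouring inside the pole, and nothing proved earlier guarantees that this is possible.

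\paragraph{Your boundary condition is too strong.} Each $B_j$ with $j\neq0$ has exactly two edges to the frozen vertices $A_1,A_2,A_3,A_5$. Prescribing the signatures at two ports of a vertex determines its whole state: the two exposed pairs have union $I_v$, which fixes $h_v$ and the third external colour. So if the ball's boundary signatures must stay equal to the formula ones, then $B_1,\dots,B_6$ are frozen and only $B_0,A_0,A_4,A_6$ can change.

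For the arborescence $0\to1\to\cdots\to6$, that is $s(A_i)=i\bmod 2$, this forces the following:
\begin{itemize}
\item $c(A_0B_0)=(0,0)$ and $c(A_6B_0)=(1,0)$;
\item $I_{A_0}=I_{A_6}=\{(0,1),(1,1),(2,1)\}$, with $(2,1)$ already opposite $A_0B_3$ and $A_6B_2$.
\end{itemize}
Every admissible value of $c(A_4B_0)$ then violates~\eqref{pre:hidden} on one of the three edges at $B_0$.

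\paragraph{How the plan can be rescued.} You must require only compatibility with the frozen states: equal colour and disjoint exposed pairs, not equal signatures. With that relaxation a repair does exist. Keep the formula states everywhere except at $A_0,A_4,A_6,B_0,B_2$. Recolour $A_4B_0$, $A_6B_0$ and $A_6B_2$ with $(0,1)$, $(2,0)$ and $(2,1)$ respectively. Swap $h_{A_0}$ on the ports towards $B_0$ and $B_1$. All remaining $h$-values are then forced, and all $21$ interfaces check.

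Writing down such a table and verifying every interface is precisely the proof. The paper does exactly this, directly, in Tables~\ref{bip:seed-table} and~\ref{bip:seed-interfaces}, without passing through the formula.
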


\begin{proof}
Assign the states in Table~\ref{bip:seed-table}, using colours
$0,1,\ldots,5$ and the port orders above. The corresponding colouring
is shown in Figure~\ref{fig:heawood-six-colouring}.
\begin{table}[!p]
\centering
\begin{tabular}{c cc cc}
\toprule
$i$ & $c_{A_i}$ & $h_{A_i}$ & $c_{B_i}$ & $h_{B_i}$\\
\midrule
0 & $(0,2,4)$ & $(1,3,5)$ & $(0,3,5)$ & $(1,2,4)$\\
1 & $(1,2,4)$ & $(0,3,5)$ & $(1,2,5)$ & $(3,0,4)$\\
2 & $(0,3,4)$ & $(5,2,1)$ & $(0,2,4)$ & $(1,5,3)$\\
3 & $(1,3,5)$ & $(2,0,4)$ & $(1,3,4)$ & $(0,5,2)$\\
4 & $(1,2,5)$ & $(0,3,4)$ & $(1,3,4)$ & $(5,2,0)$\\
5 & $(0,2,5)$ & $(1,4,3)$ & $(0,2,4)$ & $(3,1,5)$\\
6 & $(1,3,4)$ & $(0,2,5)$ & $(1,2,5)$ & $(4,3,0)$\\
\bottomrule
\end{tabular}
\caption{A complete $6$-colour state assignment for the Heawood graph.}
\label{bip:seed-table}
\end{table}

Every state contains the six distinct colours, and the external
entries agree at the ends of every edge. To verify
\eqref{pre:hidden}, put $d_0=0,d_1=1,d_2=3$ and define
\[
 J_{i,k}=I_{A_i}\cap I_{B_{i+d_k}},\qquad
 K_{i,k}=\{h_{A_i}(k),h_{B_{i+d_k}}(k)\}.
\]
Table~\ref{bip:seed-interfaces} lists $J_{i,k};K_{i,k}$ for all
$21$ edges. Each first set is contained in the second, as required.
\begin{table}[!p]
\centering
\begin{tabular}{c ccc}
\toprule
$i$ & type $0$ & type $1$ & type $2$\\
\midrule
0 & $\{1\};\{1\}$ & $\{3\};\{0,3\}$ & $\{5\};\{2,5\}$\\
1 & $\{0,3\};\{0,3\}$ & $\{3,5\};\{3,5\}$ & $\{0,5\};\{0,5\}$\\
2 & $\{1,5\};\{1,5\}$ & $\{2,5\};\{2,5\}$ & $\{1,5\};\{1,5\}$\\
3 & $\{0,2\};\{0,2\}$ & $\{0,2\};\{0,2\}$ & $\{0,4\};\{0,4\}$\\
4 & $\{0\};\{0,5\}$ & $\{3\};\{1,3\}$ & $\{4\};\{4\}$\\
5 & $\{1,3\};\{1,3\}$ & $\{3,4\};\{3,4\}$ & $\{3,4\};\{3,4\}$\\
6 & $\{0\};\{0,4\}$ & $\{2\};\{2\}$ & $\{5\};\{3,5\}$\\
\bottomrule
\end{tabular}
\caption{Verification of every interface in the Heawood seed.}
\label{bip:seed-interfaces}
\end{table}

The result follows from Lemma~\ref{pre:interface}.
\end{proof}

\begin{figure}[!p]
\centering
\begin{tikzpicture}[x=.78cm,y=.78cm,line cap=round,line join=round]
\tikzset{hc/.style={text=red!75!black,fill=white,inner sep=.65pt,font=\footnotesize},he/.style={draw=black,line width=.65pt,preaction={draw=white,line width=2.5pt}},hi/.style={draw=black,line width=.75pt},hp/.style={circle,fill=black,inner sep=1.25pt}}
\coordinate (A0p0) at (-0.90933,5.92500);
\coordinate (A0p1) at (0.90933,5.92500);
\coordinate (A0p2) at (0.00000,4.35000);
\coordinate (B1p0) at (3.39003,4.94370);
\coordinate (B1p1) at (1.75149,5.73279);
\coordinate (B1p2) at (1.88739,3.91921);
\coordinate (A1p0) at (4.06539,4.40512);
\coordinate (A1p1) at (5.19930,2.98324);
\coordinate (A1p2) at (3.40097,2.71219);
\coordinate (B2p0) at (5.97879,0.43190);
\coordinate (B2p1) at (5.57409,2.20496);
\coordinate (B2p2) at (4.24094,0.96797);
\coordinate (A2p0) at (5.97879,-0.43190);
\coordinate (A2p1) at (5.57409,-2.20496);
\coordinate (A2p2) at (4.24094,-0.96797);
\coordinate (B3p0) at (4.06539,-4.40512);
\coordinate (B3p1) at (5.19930,-2.98324);
\coordinate (B3p2) at (3.40097,-2.71219);
\coordinate (A3p0) at (3.39003,-4.94370);
\coordinate (A3p1) at (1.75149,-5.73279);
\coordinate (A3p2) at (1.88739,-3.91921);
\coordinate (B4p0) at (-0.90933,-5.92500);
\coordinate (B4p1) at (0.90933,-5.92500);
\coordinate (B4p2) at (0.00000,-4.35000);
\coordinate (A4p0) at (-1.75149,-5.73279);
\coordinate (A4p1) at (-3.39003,-4.94370);
\coordinate (A4p2) at (-1.88739,-3.91921);
\coordinate (B5p0) at (-5.19930,-2.98324);
\coordinate (B5p1) at (-4.06539,-4.40512);
\coordinate (B5p2) at (-3.40097,-2.71219);
\coordinate (A5p0) at (-5.57409,-2.20496);
\coordinate (A5p1) at (-5.97879,-0.43190);
\coordinate (A5p2) at (-4.24094,-0.96797);
\coordinate (B6p0) at (-5.57409,2.20496);
\coordinate (B6p1) at (-5.97879,0.43190);
\coordinate (B6p2) at (-4.24094,0.96797);
\coordinate (A6p0) at (-5.19930,2.98324);
\coordinate (A6p1) at (-4.06539,4.40512);
\coordinate (A6p2) at (-3.40097,2.71219);
\coordinate (B0p0) at (-1.75149,5.73279);
\coordinate (B0p1) at (-3.39003,4.94370);
\coordinate (B0p2) at (-1.88739,3.91921);
\draw[he] (A0p2) -- (B3p2);
\draw[he] (A1p2) -- (B4p2);
\draw[he] (A2p2) -- (B5p2);
\draw[he] (A3p2) -- (B6p2);
\draw[he] (A4p2) -- (B0p2);
\draw[he] (A5p2) -- (B1p2);
\draw[he] (A6p2) -- (B2p2);
\draw[he] (A0p0) -- (B0p0);
\draw[he] (A0p1) -- (B1p1);
\draw[he] (A1p0) -- (B1p0);
\draw[he] (A1p1) -- (B2p1);
\draw[he] (A2p0) -- (B2p0);
\draw[he] (A2p1) -- (B3p1);
\draw[he] (A3p0) -- (B3p0);
\draw[he] (A3p1) -- (B4p1);
\draw[he] (A4p0) -- (B4p0);
\draw[he] (A4p1) -- (B5p1);
\draw[he] (A5p0) -- (B5p0);
\draw[he] (A5p1) -- (B6p1);
\draw[he] (A6p0) -- (B6p0);
\draw[he] (A6p1) -- (B0p1);
\draw[hi] (A0p1) -- node[hc] {1} (A0p2);
\draw[hi] (A0p0) -- node[hc] {3} (A0p2);
\draw[hi] (A0p0) -- node[hc] {5} (A0p1);
\node[font=\scriptsize,inner sep=0pt] at (0.00000,5.40000) {$A_{0}$};
\node[hp] at (A0p0) {};
\node[hp] at (A0p1) {};
\node[hp] at (A0p2) {};
\draw[hi] (B1p1) -- node[hc] {3} (B1p2);
\draw[hi] (B1p0) -- node[hc] {0} (B1p2);
\draw[hi] (B1p0) -- node[hc] {4} (B1p1);
\node[font=\scriptsize,inner sep=0pt] at (2.34297,4.86523) {$B_{1}$};
\node[hp] at (B1p0) {};
\node[hp] at (B1p1) {};
\node[hp] at (B1p2) {};
\draw[hi] (A1p1) -- node[hc] {0} (A1p2);
\draw[hi] (A1p0) -- node[hc] {3} (A1p2);
\draw[hi] (A1p0) -- node[hc] {5} (A1p1);
\node[font=\scriptsize,inner sep=0pt] at (4.22189,3.36685) {$A_{1}$};
\node[hp] at (A1p0) {};
\node[hp] at (A1p1) {};
\node[hp] at (A1p2) {};
\draw[hi] (B2p1) -- node[hc] {1} (B2p2);
\draw[hi] (B2p0) -- node[hc] {5} (B2p2);
\draw[hi] (B2p0) -- node[hc] {3} (B2p1);
\node[font=\scriptsize,inner sep=0pt] at (5.26461,1.20161) {$B_{2}$};
\node[hp] at (B2p0) {};
\node[hp] at (B2p1) {};
\node[hp] at (B2p2) {};
\draw[hi] (A2p1) -- node[hc] {5} (A2p2);
\draw[hi] (A2p0) -- node[hc] {2} (A2p2);
\draw[hi] (A2p0) -- node[hc] {1} (A2p1);
\node[font=\scriptsize,inner sep=0pt] at (5.26461,-1.20161) {$A_{2}$};
\node[hp] at (A2p0) {};
\node[hp] at (A2p1) {};
\node[hp] at (A2p2) {};
\draw[hi] (B3p1) -- node[hc] {0} (B3p2);
\draw[hi] (B3p0) -- node[hc] {5} (B3p2);
\draw[hi] (B3p0) -- node[hc] {2} (B3p1);
\node[font=\scriptsize,inner sep=0pt] at (4.22189,-3.36685) {$B_{3}$};
\node[hp] at (B3p0) {};
\node[hp] at (B3p1) {};
\node[hp] at (B3p2) {};
\draw[hi] (A3p1) -- node[hc] {2} (A3p2);
\draw[hi] (A3p0) -- node[hc] {0} (A3p2);
\draw[hi] (A3p0) -- node[hc] {4} (A3p1);
\node[font=\scriptsize,inner sep=0pt] at (2.34297,-4.86523) {$A_{3}$};
\node[hp] at (A3p0) {};
\node[hp] at (A3p1) {};
\node[hp] at (A3p2) {};
\draw[hi] (B4p1) -- node[hc] {5} (B4p2);
\draw[hi] (B4p0) -- node[hc] {2} (B4p2);
\draw[hi] (B4p0) -- node[hc] {0} (B4p1);
\node[font=\scriptsize,inner sep=0pt] at (0.00000,-5.40000) {$B_{4}$};
\node[hp] at (B4p0) {};
\node[hp] at (B4p1) {};
\node[hp] at (B4p2) {};
\draw[hi] (A4p1) -- node[hc] {0} (A4p2);
\draw[hi] (A4p0) -- node[hc] {3} (A4p2);
\draw[hi] (A4p0) -- node[hc] {4} (A4p1);
\node[font=\scriptsize,inner sep=0pt] at (-2.34297,-4.86523) {$A_{4}$};
\node[hp] at (A4p0) {};
\node[hp] at (A4p1) {};
\node[hp] at (A4p2) {};
\draw[hi] (B5p1) -- node[hc] {3} (B5p2);
\draw[hi] (B5p0) -- node[hc] {1} (B5p2);
\draw[hi] (B5p0) -- node[hc] {5} (B5p1);
\node[font=\scriptsize,inner sep=0pt] at (-4.22189,-3.36685) {$B_{5}$};
\node[hp] at (B5p0) {};
\node[hp] at (B5p1) {};
\node[hp] at (B5p2) {};
\draw[hi] (A5p1) -- node[hc] {1} (A5p2);
\draw[hi] (A5p0) -- node[hc] {4} (A5p2);
\draw[hi] (A5p0) -- node[hc] {3} (A5p1);
\node[font=\scriptsize,inner sep=0pt] at (-5.26461,-1.20161) {$A_{5}$};
\node[hp] at (A5p0) {};
\node[hp] at (A5p1) {};
\node[hp] at (A5p2) {};
\draw[hi] (B6p1) -- node[hc] {4} (B6p2);
\draw[hi] (B6p0) -- node[hc] {3} (B6p2);
\draw[hi] (B6p0) -- node[hc] {0} (B6p1);
\node[font=\scriptsize,inner sep=0pt] at (-5.26461,1.20161) {$B_{6}$};
\node[hp] at (B6p0) {};
\node[hp] at (B6p1) {};
\node[hp] at (B6p2) {};
\draw[hi] (A6p1) -- node[hc] {0} (A6p2);
\draw[hi] (A6p0) -- node[hc] {2} (A6p2);
\draw[hi] (A6p0) -- node[hc] {5} (A6p1);
\node[font=\scriptsize,inner sep=0pt] at (-4.22189,3.36685) {$A_{6}$};
\node[hp] at (A6p0) {};
\node[hp] at (A6p1) {};
\node[hp] at (A6p2) {};
\draw[hi] (B0p1) -- node[hc] {1} (B0p2);
\draw[hi] (B0p0) -- node[hc] {2} (B0p2);
\draw[hi] (B0p0) -- node[hc] {4} (B0p1);
\node[font=\scriptsize,inner sep=0pt] at (-2.34297,4.86523) {$B_{0}$};
\node[hp] at (B0p0) {};
\node[hp] at (B0p1) {};
\node[hp] at (B0p2) {};
\node[hc] at ($(A0p0)!0.5!(B0p0)$) {0};
\node[hc] at ($(A0p1)!0.5!(B1p1)$) {2};
\node[hc] at ($(A0p2)!0.11!(B3p2)$) {4};
\node[hc] at ($(A1p0)!0.5!(B1p0)$) {1};
\node[hc] at ($(A1p1)!0.5!(B2p1)$) {2};
\node[hc] at ($(A1p2)!0.11!(B4p2)$) {4};
\node[hc] at ($(A2p0)!0.5!(B2p0)$) {0};
\node[hc] at ($(A2p1)!0.5!(B3p1)$) {3};
\node[hc] at ($(A2p2)!0.11!(B5p2)$) {4};
\node[hc] at ($(A3p0)!0.5!(B3p0)$) {1};
\node[hc] at ($(A3p1)!0.5!(B4p1)$) {3};
\node[hc] at ($(A3p2)!0.11!(B6p2)$) {5};
\node[hc] at ($(A4p0)!0.5!(B4p0)$) {1};
\node[hc] at ($(A4p1)!0.5!(B5p1)$) {2};
\node[hc] at ($(A4p2)!0.11!(B0p2)$) {5};
\node[hc] at ($(A5p0)!0.5!(B5p0)$) {0};
\node[hc] at ($(A5p1)!0.5!(B6p1)$) {2};
\node[hc] at ($(A5p2)!0.11!(B1p2)$) {5};
\node[hc] at ($(A6p0)!0.5!(B6p0)$) {1};
\node[hc] at ($(A6p1)!0.5!(B0p1)$) {3};
\node[hc] at ($(A6p2)!0.11!(B2p2)$) {4};
\end{tikzpicture}
\caption{A strong $6$-edge-colouring of $T(\mathbb H)$ in the classical circular layout of the Heawood graph. Clockwise from the top, the canonical triangles follow the Hamiltonian cycle $A_0,B_1,A_1,B_2,\ldots,A_6,B_0$; the seven chords correspond to the edges $A_iB_{i+3}$. Red numerals are the edge colours $0,\ldots,5$ from Table~\ref{bip:seed-table}.}
\label{fig:heawood-six-colouring}
\end{figure}

To follow the vertex-sum expression supplied by
Theorem~\ref{bip:mccuaig} recursively, we need the intermediate
graphs to be det-extremal so that
Theorem~\ref{bip:rooted-simulator} applies. McCuaig's characterisation
gives this once their $3$-connectivity is established, as in the next
lemma.

\begin{lemma}\label{bip:vertex-sum-connectivity}
Let $X$ and $Y$ be disjoint simple $3$-connected cubic bipartite
graphs, and let $x\in V(X)$ and $y\in V(Y)$. Every vertex-sum of
$X$ and $Y$ at $x$ and $y$ is a simple $3$-connected cubic bipartite
graph.
\end{lemma}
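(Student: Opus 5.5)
Write $Z$ for the vertex-sum, with $N_X(x)=\{x_0,x_1,x_2\}$ and $N_Y(y)=\{y_0,y_1,y_2\}$ joined by the three new edges $x_iy_i$. Cubicity is immediate, since each $x_i$ loses its edge to $x$ and gains $x_iy_i$, and similarly for $y_i$. Bipartiteness follows from the chosen bipartitions: $x$ and $y$ lie in opposite parts, so each $x_i$ lies in the part opposite to $x$, which is the part of $y$, and each $y_i$ lies in the part of $x$. Thus each new edge joins opposite parts. For simplicity, no loops arise, and a new edge can only duplicate an existing one if it joins two vertices of the same graph, which never happens. The $x_i$ are distinct because $X$ is simple, and likewise the $y_i$, so the three new edges form a matching and are pairwise non-parallel.

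For $3$-connectivity, let $S\subseteq V(Z)$ with $|S|\le 2$; I will show $Z-S$ is connected. Since $X$ is $3$-connected, $X-x$ is $2$-connected. Hence $(X-x)-(S\cap V(X))$ is connected whenever $|S\cap V(X)|\le1$, and similarly for $Y$.

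If $S$ meets both sides in one vertex each, both remaining sides are connected. At most two of the three new edges have an endpoint in $S$, so some edge $x_iy_i$ survives and joins them.

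Otherwise $S$ lies on one side, say $S\subseteq V(X)\setminus\{x\}$, and $Y-y$ survives intact and connected. This is the main step. I need every vertex $w$ of $X-x-S$ to reach some $x_i\notin S$, since every surviving $x_i$ is attached to $Y-y$. Apply $3$-connectivity of $X$ to $S$: the graph $X-S$ is connected, so it contains a path from $w$ to $x$. The vertex just before $x$ on this path is some $x_i\notin S$, and the initial segment avoids $x$ and $S$. Note that $x\notin S$ here, since $S\subseteq V(X)\setminus\{x\}$. Hence every vertex of $X-x-S$ reaches $Y-y$, and $Z-S$ is connected.

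Finally, $|V(Z)|\ge 6>3$, so $Z$ is not too small to be $3$-connected. This completes the plan. The only point needing care is the one-sided case; the path-to-$x$ trick handles it cleanly, including when $|S|=2$ with both vertices among the $x_i$.
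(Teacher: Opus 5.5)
Your proof is correct and follows essentially the same route as the paper. You use the same case split: either $S$ has one vertex on each side, in which case some joining edge survives, or $S$ lies on one side, where your path-to-$x$ argument is exactly the paper's observation that every component of $X-(S\cup\{x\})$ contains a surviving neighbour of $x$ (you additionally spell out simplicity, bipartiteness and the order condition, which the paper treats as immediate).
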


\begin{proof}
Simplicity, cubicity and bipartiteness are immediate. Let $S$ contain
at most two vertices of the vertex-sum. If $S$ meets each side in at
most one vertex, both sides remain connected by the $3$-connectivity
of $X,Y$, and at least one joining edge survives. Otherwise, by
symmetry, $S\subseteq V(X)\setminus\{x\}$. The side $Y-y$ remains
connected. Every component of $X-(S\cup\{x\})$ contains a surviving
neighbour of $x$, since otherwise it would be separated from $x$ in
$X-S$. Each such component therefore has a joining edge to $Y-y$.
Thus deletion of $S$ leaves a connected graph.
\end{proof}

We can now combine the Heawood seed with the replacement construction.
At each vertex-sum, a colouring on one side supplies the boundary
state to be realised by the simulator on the other side.

\begin{lemma}\label{bip:three-connected-det}
Let $G$ be a simple $3$-connected cubic bipartite det-extremal graph.
Then $T(G)$ admits a strong $6$-edge-colouring, which can be
constructed from a vertex-sum expression for $G$ in terms of Heawood
graphs.
\end{lemma}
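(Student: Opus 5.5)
We argue by induction on the number $k$ of Heawood copies in a vertex-sum expression for $G$, which exists by Theorem~\ref{bip:mccuaig}. If $k=1$, then $G\cong\mathbb H$ and Lemma~\ref{bip:heawood-seed} gives the colouring directly. For $k\ge2$, we want to write the expression so that its last operation is a vertex-sum of a graph $X$ built from $k-1$ Heawood copies with one further Heawood copy $Y=\mathbb H$, at vertices $x\in V(X)$ and $y\in V(Y)$. If the given expression is a tree of vertex-sums rather than a linear sequence, we proceed instead by strong induction on the number of copies: $G$ is a vertex-sum of $X$ and $Y$, each built from fewer copies. Only one side needs a colouring; the other side only needs to be det-extremal.

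The key steps are as follows. First, by Lemma~\ref{bip:vertex-sum-connectivity} applied inductively (the Heawood graph is simple, cubic, bipartite and $3$-connected), every intermediate graph in the expression is simple, $3$-connected, cubic and bipartite. By Theorem~\ref{bip:mccuaig}, each such graph is det-extremal, since it is itself obtained from Heawood copies by vertex-sums. Second, by the induction hypothesis, $T(X)$ has a strong $6$-edge-colouring $\varphi$. Third, since $Y$ is a connected simple cubic bipartite det-extremal graph, Theorem~\ref{bip:rooted-simulator} shows that $Y-y$ is an exact vertex simulator. Fourth, $G$ is exactly the graph obtained from $X$ by replacing $x$ with the $3$-pole $Y-y$, where the bijection between the incidences at $x$ and the ports of $Y-y$ is the one defining the vertex-sum. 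Lemma~\ref{bip:vertex-substitution} then yields a strong $6$-edge-colouring of $T(G)$ that agrees with $\varphi$ outside the replacement.

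Constructivity follows because each step is explicit. The seed colouring is given by a table. The simulator colouring comes from an arborescence of $D_M$ together with the formulas \eqref{bip:external-formula}--\eqref{bip:sink-formula}, followed by a relabelling of colours. The substitution only copies signatures across the interface.

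The main obstacle is bookkeeping rather than mathematics. We must check that the vertex-sum convention, in which the new edges join opposite parts, matches the substitution of $Y-y$ for $x$ under an arbitrary bijection. The "every port order" clause in Definition~\ref{bip:simulator-definition} handles this. We must also ensure that the side treated as the simulator is det-extremal. This is why we establish $3$-connectivity of every intermediate graph before invoking Theorem~\ref{bip:mccuaig}, rather than assuming that det-extremality is inherited from $G$.
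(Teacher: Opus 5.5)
Your proposal is correct and follows essentially the same route as the paper. Both arguments take the Heawood vertex-sum expression from Theorem~\ref{bip:mccuaig} and use Lemma~\ref{bip:vertex-sum-connectivity} together with McCuaig's theorem to make every intermediate graph $3$-connected and det-extremal; Lemma~\ref{bip:heawood-seed} seeds the induction, and each vertex-sum is handled by colouring only the $X$ side and substituting the simulator $Y-y$ via Theorem~\ref{bip:rooted-simulator} and Lemma~\ref{bip:vertex-substitution}.
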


\begin{proof}
By Theorem~\ref{bip:mccuaig}, $G$ has an expression as repeated
vertex-sums of Heawood graphs. Every intermediate graph is simple and
$3$-connected by Lemma~\ref{bip:vertex-sum-connectivity}, and
det-extremal by Theorem~\ref{bip:mccuaig}. We induct on this
expression. Lemma~\ref{bip:heawood-seed} colours each initial copy.
If an intermediate graph is a vertex-sum of $X,Y$ at $x,y$, colour
$T(X)$ by induction and replace $x$ by $Y-y$. We need only the
colouring on the $X$ side: the pole $Y-y$ is assigned the signatures
of the removed triangle at $x$.
Theorem~\ref{bip:rooted-simulator} and
Lemma~\ref{bip:vertex-substitution} extend the colouring across this
replacement.
\end{proof}

\subsection{2-edge cuts and completion of the proof}

It remains to colour the det-extremal graphs of vertex connectivity
two. We shall choose a $2$-edge cut whose closures are simple, with
one closure $3$-connected and det-extremal and the other non-det-extremal.
The Heawood construction colours the truncation of the first closure,
and the binary formula colours that of the second. We first show how
to combine these colourings, then obtain the required cut from the
structural decomposition and an alternating-cycle argument.

\begin{lemma}\label{bip:edge-sum}
Let $P$ and $R$ be disjoint loopless cubic multigraphs, and suppose
that both $T(P)$ and $T(R)$ admit strong $6$-edge-colourings. Choose
edges $e=uv\in E(P)$ and $f=xy\in E(R)$, and let $G$ be obtained
from $P\sqcup R$ by deleting $e,f$ and adding $uy,xv$. Then $T(G)$
admits a strong $6$-edge-colouring.
\end{lemma}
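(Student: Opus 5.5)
The plan is to glue the two given colourings along the new edges, after renaming the colours on the $R$ side so that the new interfaces satisfy the criterion of Lemma~\ref{pre:interface}. Since renaming colours by a permutation of $\Omega$ preserves strong edge-colourings, we may adjust the colouring of $T(R)$ freely. All local states at vertices of $P$ and $R$ are kept, with ports reassigned in the obvious way: the port of $e$ at $u$ becomes the port of $uy$, and so on.

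First we read off the boundary data. In the colouring of $T(P)$, the edge $e=uv$ gives signatures $(c,U)$ at $u$ and $(c,V)$ at $v$. By Lemma~\ref{pre:interface}, $U\cap V=\varnothing$. Since both pairs consist of internal colours distinct from $c$, the set $\{c\}\cup U\cup V$ consists of five distinct colours. Similarly, $f=xy$ in $T(R)$ gives signatures $(c',X)$ at $x$ and $(c',Y)$ at $y$, with $\{c'\}\cup X\cup Y$ also of size five.

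Next we choose a permutation $\sigma$ of $\Omega$ with
\[
 \sigma(c')=c,\qquad \sigma(X)=U,\qquad \sigma(Y)=V.
\]
The sixth remaining colour is sent to the sixth remaining colour. This is possible because both five-element configurations have the same shape: one colour together with two disjoint pairs. We apply $\sigma$ to the whole colouring of $T(R)$; it is still strong. The point to get right is the pairing. Matching $X$ with $V$ instead would place $U$ opposite $U$ at the new edge $uy$.

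Finally we verify the interfaces in $G$. Every edge of $G$ other than $uy$ and $xv$ is an edge of $P$ or $R$ with unchanged local states, so \eqref{pre:exposed} still holds there. At $uy$ the external colours are $c$ and $\sigma(c')=c$, and the exposed pairs are $U$ and $\sigma(Y)=V$, which are disjoint. At $xv$ the external colours are $\sigma(c')=c$ and $c$, and the pairs are $\sigma(X)=U$ and $V$, which are again disjoint. Each local state still uses all six colours. Moreover, $G$ is a loopless cubic multigraph, since the new edges join distinct vertices from different pieces. Lemma~\ref{pre:interface} then yields a strong $6$-edge-colouring of $T(G)$.

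There is no real obstacle here beyond choosing the correct pairing of the exposed pairs in $\sigma$.
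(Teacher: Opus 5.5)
Your proposal is correct and follows essentially the same route as the paper's proof: read off the data $(c,U,V)$ at $e$ and $(c',X,Y)$ at $f$, permute the colours of $T(R)$ so that $c'\mapsto c$, $X\mapsto U$, $Y\mapsto V$, colour both new edges $c$, and check the two new interfaces with Lemma~\ref{pre:interface}. Your remark that the pairing $X\mapsto U$, $Y\mapsto V$ (rather than the reverse) is what makes the interface at $uy$ work is accurate, and the paper uses exactly this pairing.
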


\begin{proof}
Let $c$ be the external colour at $e$, and let $U,V$ be its
exposed pairs at $u,v$. These are disjoint $2$-element sets avoiding
$c$. Hence $\{c\},U,V$ and the remaining singleton partition the
six colours. Denote the corresponding data at $f$ by $d,X,Y$,
where $X$ is exposed at $x$ and $Y$ at $y$.

Permute the colours of $T(R)$ so that $d$ maps to $c$, $X$ to $U$,
$Y$ to $V$, and the remaining colour to the remaining colour.
Assign $c$ to both new external edges. On $uy$ the exposed pairs
are $U,V$, and on $xv$ they are again $U,V$. Hence both interfaces
are compatible, and all local states are preserved. Apply
Lemma~\ref{pre:interface}.
\end{proof}

For a $2$-edge cut $\delta(S)$, its \emph{shores} are $S$ and
$V(G)\setminus S$. When the two boundary endpoints on a shore are
distinct, adding an edge between them to the subgraph induced by that
shore gives its \emph{closure}; the added edge is its \emph{marker}.
The next lemma shows that both closures are connected cubic bipartite
multigraphs. A closure may initially have parallel edges; simplicity
will be established when the appropriate cut is chosen.

\begin{lemma}\label{bip:cuts}
Let $G$ be a connected simple cubic bipartite graph with partition
sets $A$ and $B$. Then the following statements hold.
\begin{enumerate}[label=(\roman*)]
\item $G$ is bridgeless and has no cut vertex.
\item $G$ has vertex connectivity two if and only if it has a
 $2$-edge cut.
\item For every $2$-edge cut $\delta_G(S)$, each shore induces a
 connected subgraph and contains equally many vertices of $A$ and
 $B$. The two boundary endpoints on each shore are distinct, with
 one in each partition set. Adding an edge between them produces
 a connected cubic bipartite multigraph, and every pair of parallel
 edges in this multigraph contains the added edge.
\end{enumerate}
\end{lemma}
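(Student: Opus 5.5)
The whole lemma rests on one counting identity: for a shore $S$ of a cubic bipartite graph,
\[
 3|S\cap A|-3|S\cap B|=|\delta(S)\cap\partial(A\cap S)|-|\delta(S)\cap\partial(B\cap S)|.
\]
It holds because every edge inside $S$ contributes one incidence to each side. The plan is to prove (iii) first and then deduce (i) and (ii) from it.

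For (iii), let $\delta_G(S)$ be a $2$-edge cut. The right-hand side of the identity lies in $\{-2,\dots,2\}$, while the left-hand side is a multiple of $3$. Hence both sides are $0$. So $|S\cap A|=|S\cap B|$, and the two cut edges meet $S$ once in $A$ and once in $B$. In particular the two boundary endpoints on $S$ are distinct and lie in opposite parts. The same argument applies to the complementary shore.

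Connectivity of $G[S]$ comes from applying the identity to components. Suppose $G[S]$ has a component $C$. Then $\delta(C)\subseteq\delta(S)$, and $\delta(C)$ is nonempty because $G$ is connected. So $|\delta(C)|\in\{1,2\}$, and the identity applied to $C$ rules out $|\delta(C)|=1$. Thus $|\delta(C)|=2$, which means $C$ absorbs both cut edges and is the only component.

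Adding the marker between the two boundary endpoints, which lie in opposite parts, restores degree three on both and keeps the graph bipartite. The closure is therefore a connected cubic bipartite multigraph. Since $G$ is simple, the only possible parallel pair is the marker together with an existing edge of $G[S]$ joining the two boundary endpoints. This gives the last claim of (iii).

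For (i), a bridge is a $1$-edge cut. The identity would then give $3(|S\cap A|-|S\cap B|)=\pm1$, which is impossible, so $G$ is bridgeless. For cut vertices, I would use the standard fact that in a cubic graph a cut vertex $w$ forces a bridge. Indeed, $w$ has three incident edges spread over at least two components of $G-w$, so some component $C$ receives exactly one edge from $w$. That edge is then a bridge, contradicting what we just proved.

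For (ii), one direction is the reverse of this. Suppose $\{x,y\}$ is a $2$-vertex cut, so by (i) there is no cut vertex. I would use the standard cubic fact that vertex and edge connectivity coincide for cubic graphs. If that is not allowed, I would argue directly. Let $C$ be a component of $G-\{x,y\}$. Both $x$ and $y$ send edges into $C$, since otherwise the other would be a cut vertex. Among the components, some receives at most one edge from $x$ or at most one from $y$. If it receives one edge from each, those two edges form a $2$-edge cut. If $x$ sends only one edge to $C$ but $y$ sends two, take the single $x$-edge together with the single edge at $y$ that leaves $C\cup\{y\}$; this again gives a $2$-edge cut.

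For the converse, take a $2$-edge cut $\{e_1,e_2\}$. The endpoints of $e_1,e_2$ on one shore give a $2$-vertex cut, provided that shore has a vertex besides them. By (iii) each shore is balanced and contains the closure of a cubic multigraph, which has at least two vertices. The only bad case is a shore consisting of exactly the two endpoints. A short check handles it: then the two endpoints would need at least four edges between them, impossible in a simple graph. So the connectivity is at most two, and by (i) it equals two.

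The main obstacle is this case bookkeeping in (ii), both the vertex-cut to edge-cut conversion and the degenerate small shores. Everything else is the parity identity.
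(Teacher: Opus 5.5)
Your proof is correct and follows essentially the paper's route. The same counting identity $3(|A\cap S|-|B\cap S|)=d_A(S)-d_B(S)$ drives (i) and (iii). Your conversion of a $2$-vertex cut into a $2$-edge cut, via a component $C$ or $C\cup\{y\}$ with two boundary edges, matches the paper's case analysis, and the converse uses the same observation that the shore must contain a third vertex.

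One small slip: a two-vertex shore would force two parallel edges between its endpoints (four incidences), not four edges. The contradiction with simplicity is unaffected.
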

\begin{proof}
For the bipartition $A,B$ and a vertex set $S$, put
$d_A(S)=|E(A\cap S,B\setminus S)|$ and
$d_B(S)=|E(B\cap S,A\setminus S)|$. Cubicity gives
\begin{equation}\label{bip:cut-balance}
 3\bigl(|A\cap S|-|B\cap S|\bigr)=d_A(S)-d_B(S).
\end{equation}
We first prove~(i). A $1$-edge cut would make the right-hand side
equal to $1$ or $-1$, so $G$ is bridgeless. A cut vertex would separate
at least two components, each requiring at least two edges to that
vertex, which is impossible at degree three. Thus $G$ has connectivity
two or three.

We next establish~(iii), which will also be used to prove~(ii).
If $|\delta(S)|=2$, equation~\eqref{bip:cut-balance} implies
$d_A(S)=d_B(S)=1$ and $|A\cap S|=|B\cap S|$. Thus the two terminals
on each side of the cut are distinct and lie in opposite partition
sets. Both shores are connected: if one had two components, each
would need at least two boundary edges because there are no bridges.
Adding an edge between the terminals on either shore produces a
connected cubic bipartite multigraph. Since the original graph is
simple, the only possible parallelism in either closure involves its
marker.

It remains to prove~(ii). Suppose that deleting $x,y$ disconnects $G$.
Every resulting component has at least two boundary edges, since
there are no bridges. If one has exactly two, the desired edge cut
has already been found. Otherwise there are exactly two components,
say $C,D$, each with three boundary edges, and $xy$ is not an edge.
Since neither $x$ nor $y$ is a cut vertex, both have neighbours in
both components. After interchanging $x,y$ if necessary, $C$ has one
edge to $x$ and two to $y$, while $D$ has two to $x$ and one to $y$.
Then $C\cup\{y\}$ has a $2$-edge boundary. Conversely, the two
terminals on one shore of a $2$-edge cut form a vertex cut: they are
distinct. That shore contains another vertex, since a $2$-vertex shore
would have at most one internal edge and hence at least four boundary
edges in a simple cubic graph. This proves~(ii), and also shows that a $3$-connected cubic graph
is $3$-edge-connected.

\end{proof}

An arbitrary $2$-edge cut need not give a $3$-connected closure.
We first recall the structural result that permits a suitable choice.
For a simple $2$-connected cubic bipartite graph $G$, declare two
vertices equivalent if they coincide or are joined by three
edge-disjoint paths.
Contracting the equivalence classes and deleting loops gives a cactus
$C$: a connected multigraph whose blocks are cycles, including
cycles of length two.

At a vertex $x$ of $C$, start with the subgraph induced by its class.
For each pair of edges incident with $x$ that forms a $2$-edge cut of
$C$, add a marker between their endpoints in that class. The resulting
\emph{piece} $G_x$ is either a $3$-bond (two vertices joined by three
parallel edges) or a simple $3$-connected cubic bipartite graph.
These are the decomposition and pieces of
\cite[Section~2]{FJLS2003}; see also
\cite[Definition~5.1 and Observation~5.2]{CKP2026}.
We use the following degree-$2$ specialisation of the structural theorem.

\begin{theorem}
\label{bip:end-piece-structure}
Let $G$ be a simple cubic bipartite det-extremal graph of connectivity
two, with decomposition cactus $C$ and pieces $G_x$ as above.
If $\deg_C(x)=2$, then $G_x$ is det-extremal.
\end{theorem}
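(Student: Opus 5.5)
The plan is to argue by contraposition using the alternating-cycle criterion of Lemma~\ref{bip:sign-cycle}. Suppose $\deg_C(x)=2$ and $G_x$ is not det-extremal. Since $x$ lies on exactly one cycle block of $C$, the piece $G_x$ has exactly one marker edge $m=pq$. It is obtained from the subgraph induced by the class of $x$, with $p,q$ the endpoints of the two cut edges $pp',qq'$ leaving the class. The complementary shore $W$ also has a closure $G_W$, with marker $p'q'$. We aim to produce a perfect matching of $G$ with an alternating cycle of length divisible by four, contradicting det-extremality of $G$. By Lemma~\ref{bip:cuts}(iii), $p$ and $q$ lie in opposite partition sets, and so do $p'$ and $q'$; hence the transfer of matchings and cycles below respects the bipartition.

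First observe that $G_x$ is not a $3$-bond. A $3$-bond has one marker but three parallel edges, which would force $\deg_C(x)=3$, or would give a multigraph that is det-extremal in any case. So $G_x$ is simple and $3$-connected. By Lemma~\ref{bip:sign-cycle}, $G_x$ has a perfect matching $N$ and an $N$-alternating cycle $D$ of length $\equiv0\pmod 4$.

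The transfer step splits into cases according to whether $m\in N$ and whether $m\in E(D)$.

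\emph{Case $m\notin N$ and $m\notin E(D)$.} Then $N$ and $D$ live inside the shore. Extend $N$ by any perfect matching of $G_W$ avoiding the marker $p'q'$. Such a matching exists because $G_W$ is a connected cubic bipartite multigraph, hence $2$-edge-connected, and every edge of such a graph avoids some perfect matching: the three classes of a $3$-edge-colouring are perfect matchings. The cycle $D$ stays alternating, with the same length.

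\emph{Case $m$ is used by $N$ or $D$.} Replace $m$ by the path $p\,p'\,\cdots\,q'\,q$ through $W$. Choose a perfect matching $L$ of $G_W$ containing $p'q'$, and a path in $G_W-p'q'$ from $p'$ to $q'$ that is $L$-alternating and starts and ends with non-$L$ edges. Equivalently, take an $L$-alternating cycle $D_W$ through $p'q'$, which exists since $G_W$ is connected bipartite cubic. In the subcase $m\in N$, the cut edges $pp',qq'$ become matching edges, and $p'q'\in L$ is dropped. This is where parity enters: replacing $m$ by a path of length $2+(\ell_W-1)$, where $\ell_W=|D_W|$, changes $|D|$ by $\ell_W$. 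So we need flexibility mod $4$. If the resulting cycle has length $\equiv2\pmod4$, we use instead an alternating cycle of the other residue through $p'q'$ in $G_W$, or swap $D$ for $D\triangle E(C')$ inside $G_x$.

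The main obstacle is exactly this parity control when $D$ must pass through $m$. I would first try to avoid it. Among all non-det-extremal witnesses $(N,D)$ in $G_x$, I would aim to show that one can be chosen with $m\notin N$ and $m\notin E(D)$, by symmetric-differencing with other alternating cycles of $G_x$ and using $3$-connectivity (via Lemma~\ref{bip:strong-digraph} on $D_N$, contracting $N$ with $m\in N$). If that fails, the fallback is that $G_W$ is itself det-extremal, since it is a minor-like reduction of $G$. Then all $L$-alternating cycles through the marker have length $\equiv2\pmod4$, so the length change $\ell_W$ has a fixed residue. The substitution then changes $|D|$ by $\ell_W-2\equiv0\pmod4$ relative to traversing $m$ directly, preserving divisibility by four. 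This yields the contradiction, so $G_x$ is det-extremal.
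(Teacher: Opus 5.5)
\textbf{The parity step cannot give a contradiction.} Write $m'=p'q'$ for the marker of $G_W$. Since $G$ is det-extremal, residues at this $2$-edge cut are rigid.

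\emph{Cycles avoiding $m$.} Any alternating cycle of $G_x$ avoiding $m$ lifts verbatim to $G$, so it has length $\equiv2\pmod4$. Hence every witness $(N,D)$ in $G_x$ must pass through $m$. Your first plan, to choose a witness avoiding $m$, just restates the contradiction you want, and you give no mechanism for it.

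\emph{Cycles through $m$.} If $m\in N\cap E(D)$ and $D_W$ is an $L$-alternating cycle through $m'\in L$, the lifted cycle has length exactly $|D|+|D_W|$. The case $m\in E(D)\setminus N$ reduces to this via $N\mathbin{\triangle}E(D)$. Det-extremality of $G$ then forces $|D|+|D_W|\equiv2\pmod4$ for \emph{every} such pair. So all alternating cycles through $m$ in $G_x$ share one residue $r_x$, all those through $m'$ in $G_W$ share one residue $r_W$, and $r_x+r_W\equiv2\pmod4$. In other words, exactly one of the two closures is det-extremal. There is no cycle of the ``other residue'' through $m'$ to switch to.

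\emph{The fallback fails twice.} First, its justification is false: when $G_x$ is det-extremal, which is the conclusion you want, the complementary closure never is. This is the argument of Lemma~\ref{bip:end-piece}; for instance, in the det-extremal cyclic chain of three copies of $\mathbb H-e$, the closure complementary to one copy is not det-extremal. Second, under your contrapositive hypothesis one does get $r_W=2$, but then the substitution changes the length by $\ell_W$, as you first computed, not by $\ell_W-2$. The lifted cycle has length $|D|+\ell_W\equiv2\pmod4$, which is entirely consistent with $G$ being det-extremal.

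\textbf{The missing ingredient is global.} The parity bookkeeping at one $2$-edge cut is symmetric in the two shores. Your argument uses $\deg_C(x)=2$ only to produce the cut, so it cannot decide \emph{which} closure is det-extremal, and that decision is the whole content of the theorem. Concretely, in your contrapositive setting you must rule out a simple $3$-connected cubic bipartite graph $G_x$ with an edge $m$ such that all alternating cycles avoiding $m$ have length $\equiv2\pmod4$ and all those through $m$ have length $\equiv0\pmod4$. Equivalently, negating the entry of $m$ in the biadjacency matrix gives a matrix $B'$ with $|\det B'|=\per B'$. If such a graph existed, its edge-sum with a Heawood graph would be a det-extremal graph whose cactus is a $2$-cycle with a non-det-extremal piece at a degree-$2$ vertex.

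Excluding such graphs is a genuinely structural statement about $3$-connected cubic bipartite graphs. It does not follow from the cut parity, and your sketch (``symmetric-differencing \dots\ using $3$-connectivity'') gives no argument for it. The paper gives no proof of its own either: it imports the statement as the degree-$2$ case of the structure theorem of Funk, Jackson, Labbate and Sheehan, which rests on the global structure theory of det-extremal graphs building on McCuaig's characterisation. To complete your route you would have to prove the nonexistence statement above.
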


This is the degree-$2$ case of the structural theorem of
Funk, Jackson, Labbate and Sheehan~\cite{FJLS2003}, stated in
\cite[Theorem~5.4]{CKP2026}. The next lemma gives both closures
needed for the colouring argument.

\begin{lemma}\label{bip:end-piece}
Let $G$ be a connected simple cubic bipartite det-extremal graph
with vertex connectivity two. Then $G$ has a $2$-edge cut with simple
closures $P,R$ such that $P$ is $3$-connected and det-extremal,
whereas $R$ is not det-extremal.
\end{lemma}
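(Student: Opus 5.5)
The plan is to take $P$ as the piece at a leaf-like vertex of the decomposition cactus and to obtain non-det-extremality of $R$ from a sign-reversing alternating cycle.

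\textbf{Choosing the cut.} Let $C$ be the decomposition cactus of $G$. Since $G$ has connectivity two, $C$ has at least two vertices. Every cactus with at least two vertices has a vertex $x$ of degree two that lies on exactly one cycle block; for instance, take a pendant cycle block and a non-articulation vertex on it. Such a cycle has length at least two, so $x$ exists.

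The two edges of $C$ at $x$ form a $2$-edge cut $\delta_G(S)$, where $S$ is the class of $x$. By Lemma~\ref{bip:cuts}(iii) its closure $P$ on the shore $S$ is a connected cubic bipartite multigraph, and it coincides with the piece $G_x$. The piece $G_x$ is a $3$-bond or a simple $3$-connected graph. It cannot be a $3$-bond: then the shore $S$ would have two vertices, which is excluded by the proof of Lemma~\ref{bip:cuts}(ii). Hence $P$ is simple and $3$-connected, and Theorem~\ref{bip:end-piece-structure} makes $P$ det-extremal. Parallel edges in the other closure $R$ must contain the marker. A parallel pair would mean the two terminals on the other shore are adjacent in $G$; then a smaller $2$-edge cut exists, and I would handle this either by re-choosing $x$ or by a short argument that the terminals are nonadjacent. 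I expect simplicity of $R$ to need this small case check.

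\textbf{Non-det-extremality of $R$.} This is the main obstacle, and I would argue by contradiction. Suppose $R$ is det-extremal. $G$ is the edge-sum, as in Lemma~\ref{bip:edge-sum}, of $P$ and $R$ along their markers $e=uv$ and $f=xy$. Every perfect matching of $G$ contains either both cut edges or neither, by the parity from \eqref{bip:cut-balance}, since each shore is balanced. Matchings containing neither restrict to perfect matchings of $P-e$ and $R-f$. Matchings containing both correspond to pairs of perfect matchings of $P$ and $R$ that use their markers. The sign of a matching of $G$ is then, up to a fixed global constant, the product of the signs of the two restricted matchings. There may also be an extra constant relating the "both" type to the "neither" type. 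Under det-extremality of $P$ and $R$, both types have constant signs. It then remains to show these two constants differ.

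To do this, I would find a perfect matching $M$ of $G$ containing both cut edges and an $M$-alternating cycle through the cut edges whose length is divisible by four. By Lemma~\ref{bip:sign-cycle}, switching along it reverses the sign relative to the other type. The cycle splits into an alternating $u$--$v$ path in $P$ and an alternating $x$--$y$ path in $R$. Adding each marker, each path closes to an alternating cycle in its closure; since $P$ and $R$ are det-extremal, both of these cycles have length $\equiv 2 \pmod 4$. Summing, the cycle in $G$ has length $(2\bmod 4)-1+(2\bmod 4)-1+2$, which is divisible by four. So $G$ has matchings of both signs, contradicting det-extremality of $G$. Hence $R$ is not det-extremal.

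\textbf{Supporting facts to verify.} Two facts need checking. First, each marker lies on some alternating cycle relative to a perfect matching containing it. This follows because cubic bipartite graphs are matching covered, so $P$ and $R$ each have a perfect matching containing the marker and one avoiding it, and their symmetric difference gives the cycle. Second, the lengths add correctly when the two paths are glued along the cut edges.
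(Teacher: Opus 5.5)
Your overall route is the paper's. You take $P=G_z$ for a degree-$2$ vertex $z$ on an end cycle of the cactus; excluding the $3$-bond via the two-vertex shore is a fine substitute for the paper's remark that its nonmarker edges would be parallel in $G$. You then glue an alternating cycle $C_P$ through $e$ to one $C_R$ through $f$ along $uy,xv$. Your contradiction argument for non-det-extremality of $R$ is valid as written, also for a multigraph $R$. The sign-product paragraph is not needed.

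The gap is simplicity of $R$, which the statement requires and which you leave open. Re-choosing the end vertex cannot work in general, because the cactus structure does not prevent the other shore's terminals $x,y$ from being adjacent for every end cycle at once. For instance, take two copies of a Heawood graph minus an edge, join $x$ to one terminal of each copy and $y$ to the other two, and add $xy$. Both end pieces then have parallel edges in the complementary closure. So this configuration has to be excluded using det-extremality.

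The missing idea is to run your gluing argument directly, not under the hypothesis that $R$ is det-extremal. Det-extremality of $P$ gives $|C_P|\equiv 2\pmod 4$. The glued $M$-alternating cycle has length $|C_P|+|C_R|$, and det-extremality of $G$ forces this to be $\equiv 2\pmod 4$. Hence $|C_R|\equiv 0\pmod 4$ for every choice of perfect matchings $r_0$ avoiding $f$ and $r_1$ containing $f$.

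If $f$ were parallel to an edge $g$ of $G$ joining $x,y$, the choice $r_0=(r_1\setminus\{f\})\cup\{g\}$ would make $C_R$ a $2$-cycle, a contradiction. Equivalently, $C_P-e$ together with $vx$, $g$, $yu$ is an $M$-alternating cycle of length $|C_P|+2\equiv 0\pmod 4$ in $G$. This direct form also gives non-det-extremality of $R$ at once, since $C_R$ is an $r_1$-alternating cycle of length divisible by four. That is how the paper concludes.
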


\begin{proof}
The decomposition cactus $C$ has at least two vertices, since $G$
has connectivity two. Choose an end cycle of $C$, that is, a cycle
containing at most one cut vertex, and let $z$ be a non-cut vertex on
it. Then $\deg_C(z)=2$.
Its class $S$ therefore has exactly two boundary edges, and the
piece $P=G_z$ is the closure of $G[S]$ with exactly one marker.
Theorem~\ref{bip:end-piece-structure} makes $P$ det-extremal.
If $P$ were a $3$-bond, its two nonmarker edges would be parallel
edges of $G$, contradicting simplicity. The description of the
pieces therefore makes $P$ simple and $3$-connected.

Let $R$ be the other closure. By Lemma~\ref{bip:cuts}, both closures
are connected cubic bipartite multigraphs. Write their markers as
$e=uv$ in $P$ and $f=xy$ in $R$, so that $G$ is obtained by deleting
$e,f$ and adding $uy,xv$.
Choose perfect matchings $p_0,p_1$ of $P$ avoiding and containing
$e$, respectively, and $r_0,r_1$ of $R$ avoiding and containing $f$.
Such matchings exist because each closure admits a proper
$3$-edge-colouring. Let $C_P$ and $C_R$ be the cycles containing
the markers in $p_0\mathbin{\triangle}p_1$ and
$r_0\mathbin{\triangle}r_1$, respectively. Cycles of length two are
allowed in $R$. Since $P$ is det-extremal,
Lemma~\ref{bip:sign-cycle} gives $|C_P|\equiv2\pmod4$.

Joining the paths $C_P-e$ and $C_R-f$ with $uy,xv$ gives a cycle
in $G$ of length $|C_P|+|C_R|$. It is alternating with respect to
the perfect matching
\[
 M=\bigl((p_1\cup r_1)\setminus\{e,f\}\bigr)\cup\{uy,xv\}.
\]
Indeed, the two joining edges belong to $M$, and each path alternates
with its first and last edges outside $M$. Since $G$ is det-extremal,
another application of Lemma~\ref{bip:sign-cycle} yields
\[
 |C_P|+|C_R|\equiv2\pmod4,
 \qquad\text{hence}\qquad |C_R|\equiv0\pmod4.
\]

This holds for every choice of $r_0,r_1$ as above. If $f$ were parallel
to an old edge $g$, choose $r_1$ containing $f$ and put
$r_0=(r_1\setminus\{f\})\cup\{g\}$. Then $C_R$ would have length
two, a contradiction. By Lemma~\ref{bip:cuts}, every possible pair
of parallel edges in $R$ contains its marker, so $R$ is simple.
Finally, $C_R$ is an $r_1$-alternating cycle of length divisible by
four. Lemma~\ref{bip:sign-cycle} therefore shows that $R$ is not
det-extremal.
\end{proof}

\begin{proof}[Proof of Theorem~\ref{bip:main}]
If $G$ is not det-extremal, Lemmas~\ref{bip:mixed-bits}
and~\ref{bip:colour-formula} give a strong $6$-edge-colouring of
$T(G)$. We may therefore suppose that $G$ is det-extremal. By
Lemma~\ref{bip:cuts}, its connectivity is two or three. In the
$3$-connected case, Lemma~\ref{bip:three-connected-det} supplies
the required colouring through a Heawood vertex-sum expression.

If the connectivity is two, take the closures $P,R$ supplied by
Lemma~\ref{bip:end-piece}. Lemma~\ref{bip:three-connected-det}
colours $T(P)$, while Lemmas~\ref{bip:mixed-bits}
and~\ref{bip:colour-formula} colour $T(R)$.
Lemma~\ref{bip:edge-sum} combines these colourings along the marker
edges to give a strong $6$-edge-colouring of $T(G)$. The lower bound
follows from~\eqref{pre:lowerbound}.
\end{proof}

\FloatBarrier
\section{Claw-free cubic graphs}\label{cf:section}

We now prove Theorem~\ref{cf:main}, using the palette
$\Omega=\{1,2,3,4,5,6\}$. Throughout the proof, $G$ is the original
simple bridgeless cubic claw-free base, and $T(G)$ is the graph to be
coloured.

When $G\ne K_4$ and a triangle block is present, suppressing the
diamond chains produces a reduced base $G_0$. Contracting the retained
triangle blocks gives a loopless cubic multigraph $Q$, called the
skeleton, and $G_0\cong T(Q)$.
We first justify this reduction, then construct a colouring of
$T(G_0)$ by working with the triangle blocks of $G_0$. Finally, we restore
the diamond chains to obtain the required colouring of $T(G)$.
The result for double truncations will follow as a corollary.
The cases $G=K_4$ and graphs consisting only of diamond blocks are
handled separately by the same diamond colouring.

\subsection{The block decomposition and the reduced base}

An induced diamond has tips $a,b$ and centres $u,v$, with edges
\[
 au,av,bu,bv,uv.
\]
The tips are nonadjacent. In a cubic supergraph each tip has precisely
one edge leaving the diamond, whereas its centres have no outside
neighbours.

\begin{lemma}\label{cf:blocks}
Let $G$ be a connected simple cubic claw-free graph with $G\ne K_4$.
Then the vertex sets of the induced diamonds of $G$, together with the
vertex sets of the triangles not contained in an induced diamond, form
a partition of $V(G)$. Each diamond block has two edges leaving it,
and each triangle block has three.
\end{lemma}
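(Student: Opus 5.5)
The plan is to start from the standard local fact for claw-free cubic graphs: every vertex lies in a triangle. Let $v$ have neighbours $x,y,z$. Since $\{v,x,y,z\}$ does not induce a claw, some pair of neighbours, say $xy$, is an edge, and then $vxy$ is a triangle. It follows that every vertex lies in at least one triangle. The rest of the argument classifies how triangles can overlap.

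Two distinct triangles sharing a vertex must share an edge. Indeed, if triangles $vxy$ and $vzw$ met only in $v$, then $v$ would have degree at least four. So two triangles are either disjoint or share exactly one edge $uv$. In the second case they are $uva$ and $uvb$ with $a\ne b$. Then $u$ and $v$ have all their neighbours inside $\{a,b,u,v\}$. If $ab$ were an edge, this set would induce $K_4$, which is cubic; by connectivity $G=K_4$, which is excluded. Hence $\{a,b,u,v\}$ induces a diamond with tips $a,b$ and centres $u,v$.

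The key steps, in order, are as follows.
\begin{enumerate}
\item Show that the only triangles meeting a diamond are its two triangles. A triangle through $u$ or $v$ uses two of that centre's neighbours, so it lies inside the diamond. A triangle through a tip $a$ but not through $u$ or $v$ would need the edge $uv$ excluded, so it uses $a$'s outside neighbour $a'$ together with $u$ or $v$. That makes it one of the diamond's triangles, a contradiction, since $u$ and $v$ have no outside neighbours.
\item Show that two distinct induced diamonds are vertex-disjoint. A shared vertex is a centre of one of them or a tip of one of them. By step~1, the triangles of the second diamond would then meet the first diamond and so equal its two triangles, forcing the diamonds to coincide. Tips are handled the same way, because every vertex of a diamond lies on one of its triangles.
\item Show that triangles not contained in a diamond are pairwise disjoint and disjoint from all diamonds. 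Two such triangles sharing a vertex would share an edge and hence form a diamond, and a triangle meeting a diamond is one of its triangles by step~1.
\end{enumerate}
Combining these with the covering fact gives the partition of $V(G)$.

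For the edge counts, each tip has exactly one edge leaving its diamond and the centres have none, so a diamond block has exactly two edges leaving it. For a triangle block $xyz$, suppose some vertex has its third neighbour inside the block. That is impossible, since all three vertices already have their two block-neighbours inside. So each vertex contributes exactly one leaving edge, giving three.

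The main obstacle is making step~1 airtight, in particular excluding a triangle through a tip that uses the tip's outside neighbour. I would settle this by noting that any triangle containing a tip $a$ must use two neighbours of $a$. These are $u$, $v$ or $a'$, and since $u$ and $v$ are not adjacent to $a'$, the triangle must be $auv$, which lies inside the diamond.
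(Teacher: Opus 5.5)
Your proof is correct and follows the paper's argument essentially step for step: every vertex lies on a triangle, overlapping triangles span a diamond or the excluded $K_4$, any triangle meeting an induced diamond lies inside it, two meeting diamonds coincide, the leftover triangles are pairwise disjoint, and cubicity gives the edge counts. The tip case in your step~1 is garbled as written, but your final paragraph settles it cleanly: the tip's outside neighbour is adjacent to neither centre, which is exactly the observation the paper uses.
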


\begin{proof}
By claw-freeness, every vertex belongs to a triangle.
If two distinct triangles meet at a vertex, cubicity forces them to
share an edge. The four vertices in their union induce either a diamond
or $K_4$. In the latter case all four vertices are saturated, so
connectedness gives $G=K_4$, contrary to the hypothesis.

Every triangle meeting an induced diamond $D$ is contained in $D$:
the centres of $D$ already have degree three within $D$, and the outside
neighbour of either tip cannot be adjacent to a centre.

Now suppose two induced diamonds meet. Of the two triangles forming
the second diamond, one contains a common vertex and therefore lies
inside the first diamond by the preceding observation. The other
triangle shares an edge with the first triangle, so it also meets
the first diamond and lies inside it. Thus the two diamonds have the same four vertices
and are equal.

Finally, a vertex outside all diamonds belongs to a triangle. This
triangle cannot meet a diamond, by the claim. It cannot meet another
triangle either, since their union would be a diamond or the excluded
$K_4$. Hence the triangles on the remaining vertices are pairwise
disjoint and unique. Cubicity gives the stated numbers of edges leaving
the blocks.
\end{proof}

Contracting each block of Lemma~\ref{cf:blocks} gives a connected
loopless multigraph $J$. A vertex representing a triangle block has
degree three, and one representing a diamond block has degree two;
we call these triangle vertices and diamond vertices, respectively.
Edges between distinct blocks and their attachment incidences are
retained individually.
This is the triangle-and-diamond
decomposition underlying \cite[Proposition~1]{Oum2011}.
The next lemma describes the chains formed by diamond vertices and
shows that, when triangle vertices are present, bridgelessness forces
each chain to have distinct end blocks, so that suppression creates
no loop.

\Needspace{11\baselineskip}
\begin{lemma}\label{cf:skeleton}
Let $G$ be a connected simple bridgeless cubic claw-free graph with
$G\ne K_4$, and let $J$ be the multigraph obtained by contracting
each block of the partition in Lemma~\ref{cf:blocks}. Then the
following statements hold.
\begin{enumerate}[label=(\roman*)]
\item If every block is a diamond, then $J$ is a cycle, where two
vertices joined by two parallel edges are regarded as a cycle of
length two.
\item If at least one block is a triangle, then $E(J)$ has a partition
into paths with distinct triangle vertices as endpoints and diamond
vertices as internal vertices. Paths with no internal vertices are
allowed. Replacing each such path by a distinct edge yields a connected
loopless cubic multigraph $Q$.
\end{enumerate}
In case (ii), the multigraph $Q$ is called the \emph{skeleton} of $G$.
Each edge of $Q$ records its corresponding diamond chain and the two
attachment ports in $G$.
\end{lemma}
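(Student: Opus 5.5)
The plan is to read off the structure of $J$ from the degrees of its vertices, and to use bridgelessness of $G$ only to rule out loops after suppression. By Lemma~\ref{cf:blocks}, $J$ is a connected loopless multigraph. It is loopless because every edge of $J$ joins two distinct blocks, the blocks being the parts of a partition and the edges inside a block being contracted. Each triangle vertex has degree three and each diamond vertex has degree two. Parallel edges of $J$ are kept, since edges between blocks are retained individually.

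For (i), suppose every block is a diamond. Then $J$ is a connected $2$-regular loopless multigraph, hence a cycle, possibly of length two when two diamonds are joined twice.

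For (ii), suppose at least one triangle vertex exists. Consider the maximal trails obtained by starting at a triangle vertex along one of its edges and continuing through diamond vertices. A diamond vertex has degree two, so each continuation is forced.
\begin{itemize}
\item Such a walk cannot return to a diamond vertex already visited. If it first repeated a diamond vertex $w$, then $w$ would be entered a second time along an edge not yet used, giving $w$ degree at least three, which is impossible.
\item By finiteness, the walk therefore ends at a triangle vertex.
\item Each edge of $J$ lies on exactly one such path. Every component of the diamond-vertex subgraph is a path or a cycle. A cycle of diamond vertices would be a whole component of $J$, which is excluded by connectivity and the presence of a triangle vertex. So every diamond component is a path whose two end edges go to triangle vertices, giving the partition of $E(J)$.
\end{itemize}
Replacing each path by an edge preserves degrees at the triangle vertices, so $Q$ is cubic. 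Connectivity passes from $J$ to $Q$ because each path is merely shortened.

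The main obstacle is to show that the two endpoints of each path are distinct triangle vertices, so that $Q$ is loopless. I would argue by contradiction. Suppose a path runs from triangle block $T$ back to $T$, leaving along external edges at ports $p\neq q$ of $T$; the ports differ because each triangle vertex carries one external edge. Let $r$ be the third vertex of $T$, with external edge $g$. Let $S$ be the vertex set of $T$ together with all diamonds on the chain. Inside $G$, the only edge leaving $S$ is $g$:
\begin{itemize}
\item the two edges at $p$ and $q$ lie on the chain, with both endpoints in $S$;
\item each diamond on the chain has both its outside edges on the chain;
\item each diamond's centres have no outside neighbours.
\end{itemize}
Thus $g$ is a bridge of $G$ unless $S=V(G)$. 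But $S=V(G)$ is impossible, because then $r$ would need an outside neighbour. This contradicts bridgelessness.

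The same argument covers a path with no internal vertices, namely an edge $pq$ inside $T$. That case cannot occur anyway, since $T$ is an induced block whose internal edges are contracted.

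Finally, each edge of $Q$ remembers its chain and the two end ports, which are the first and last external edges of the path. This gives the recorded data in the statement.
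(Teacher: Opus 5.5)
Your proposal is correct and follows essentially the same route as the paper: the degree-$2$ diamond vertices force a unique continuation, and a chain returning to its starting triangle would isolate that triangle's third incidence as a bridge. The only difference is that you exhibit the bridge $g$ directly in $G$ using the set $S$. The paper instead first observes that $J$ is bridgeless and locates the bridge in $J$; the two versions are equivalent.
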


\begin{proof}
Since the blocks are connected and $G$ is bridgeless, their contraction
$J$ is also bridgeless.

If all vertices of $J$ have degree two, $J$ is a cycle, possibly of
length two. Otherwise, since $J$ is connected, following the unique
continuation through each degree-$2$ diamond vertex partitions $E(J)$
into trails between triangle vertices, possibly with coincident endpoints.

Such a trail cannot return to its starting triangle vertex $t$. If it did,
the set consisting of $t$ and its internal diamond vertices would have
exactly one edge leaving it, namely the unused third incidence at $t$.
This would be a bridge of $J$. Thus these trails are paths with distinct
triangle endpoints. Their replacement by edges creates no loops;
parallel edges may occur. Each triangle supplies exactly three
incidences, so $Q$ is cubic, and connectedness follows from that of $J$.
\end{proof}

Each path in Lemma~\ref{cf:skeleton}(ii) represents a \emph{diamond
chain} in $G$: a sequence of diamond blocks joined between two triangle
blocks. The length of the chain is its number of diamonds, so a direct
edge between triangle blocks is a chain of length zero. Replacing each
non-empty chain by an edge joining its original attachment vertices,
while retaining the direct edges between triangle blocks, defines the
reduced base $G_0$; see Figure~\ref{fig:diamond-reduction}.

\begin{lemma}\label{cf:reduced-base}
Let $G$ be a connected simple bridgeless cubic claw-free graph with
$G\ne K_4$, and suppose that its block partition contains a triangle.
Let $G_0$ be its reduced base and $Q$ its skeleton.
Then $G_0$ is a connected simple cubic diamond-free
graph, and $G_0\cong T(Q)$.
Moreover, $G$ is obtained from $G_0$ by replacing the corresponding edges
by their recorded diamond chains.
\end{lemma}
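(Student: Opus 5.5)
We prove Lemma~\ref{cf:reduced-base} by first describing $G_0$ block by block and then reading off $T(Q)$. By Lemma~\ref{cf:skeleton}(ii), $E(J)$ splits into paths between distinct triangle vertices whose internal vertices are diamond vertices. In $G$, each such path of positive length is a diamond chain. It starts at a port $p$ of one triangle block $\Delta$ and ends at a port $q$ of a different triangle block $\Delta'$. The reduced base $G_0$ deletes the diamonds of the chain and joins $p$ to $q$ directly; direct edges between triangle blocks are kept. Hence $V(G_0)$ is the union of the vertex sets of the triangle blocks, and each triangle keeps its three internal edges. Each port of a triangle block is the endpoint of exactly one edge leaving the block, and this edge belongs to exactly one path of the partition. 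So in $G_0$ every vertex has its two triangle neighbours and exactly one further neighbour. Thus $G_0$ is cubic, and it is connected because $Q$ is connected.

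Next I would build the isomorphism $G_0\cong T(Q)$ explicitly. The vertices of $Q$ are the triangle blocks, and the edges of $Q$ are the paths, each recording its two attachment ports. Send the port of $\Delta$ used by the path $e$ to the vertex $(\Delta,e)$ of $T(Q)$. This map is a bijection, since the three ports of $\Delta$ correspond to the three incidences of $\Delta$ in $Q$. Internal triangle edges go to the edges of the canonical triangles. The edge of $G_0$ that replaces path $e$ joins the two recorded ports, so it goes to the external edge $(\Delta,e)(\Delta',e)$. Because $Q$ is loopless, $\Delta\ne\Delta'$, so this edge really joins different canonical triangles.

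Simplicity and diamond-freeness then follow from the general facts about truncations stated in the introduction. $T(Q)$ is a simple cubic graph that is claw-free and diamond-free whenever $Q$ is a loopless cubic multigraph. Looplessness of $Q$ is exactly what bridgelessness provided in Lemma~\ref{cf:skeleton}. Parallel edges of $Q$ are harmless, because they use distinct ports. If one prefers a direct argument: two edges of $G_0$ joining the same pair of vertices would have to be external edges at the same port, which is impossible since each port has exactly one external edge. An induced diamond in $G_0$ would need two triangles sharing an edge, but the only triangles in $T(Q)$ are the canonical ones, and these are vertex-disjoint.

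The final assertion holds by construction. Each chain was recorded together with its attachment ports, so substituting it back for its edge $pq$ recovers $G$ exactly. I expect the main obstacle to be stating this bookkeeping cleanly. In particular, one must check that when $Q$ has parallel edges, distinct chains still attach at distinct ports. This holds because the block partition of Lemma~\ref{cf:blocks} assigns each leaving edge to a unique path.
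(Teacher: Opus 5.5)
Your proposal is correct and follows essentially the same route as the paper. Both identify the three attachment vertices of each triangle block with the three incidences at the corresponding vertex of $Q$, read off $G_0\cong T(Q)$ from the incidence definition using looplessness from Lemma~\ref{cf:skeleton}, and take simplicity and diamond-freeness from the general properties of truncations. Your extra direct checks of cubicity, simplicity and diamond-freeness are sound but not needed once the isomorphism is in place.
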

\begin{proof}
By Lemma~\ref{cf:skeleton}, the skeleton $Q$ is connected, loopless
and cubic. At each retained triangle block, the three attachment
vertices correspond bijectively to the three incidences at the
associated vertex of $Q$. Under this correspondence, the block edges
are the canonical triangle edges, and each joining edge represents
the corresponding edge of $Q$. This is the incidence definition of
$T(Q)$, so $G_0\cong T(Q)$, including when $Q$ has parallel edges.

The properties of truncations recalled in Section~\ref{intro:section}
therefore make $G_0$ simple, cubic and diamond-free; it is connected
because $Q$ is connected. The recorded attachment vertices and chain
lengths determine the restoration of $G$.
\end{proof}

\begin{figure}[!htbp]
\centering
\begin{tikzpicture}[x=1.3cm,y=1cm,line width=.65pt,
 vertex/.style={circle,fill=black,inner sep=1.4pt},
 every node/.style={font=\small}]
\begin{scope}[yshift=1.35cm]
 \coordinate (x) at (.55,0);
 \coordinate (l1) at (0,.45); \coordinate (l2) at (0,-.45);
 \coordinate (y) at (7.45,0);
 \coordinate (r1) at (8,.45); \coordinate (r2) at (8,-.45);
 \draw (x)--(l1)--(l2)--cycle;
 \draw (y)--(r1)--(r2)--cycle;
 \draw (l1)--(-.4,.75) (l2)--(-.4,-.75)
       (r1)--(8.4,.75) (r2)--(8.4,-.75);
 \foreach \j/\t in {1/2,2/6} {
  \coordinate (a\j) at (\t-.6,0);
  \coordinate (b\j) at (\t+.6,0);
  \coordinate (u\j) at (\t,.48);
  \coordinate (v\j) at (\t,-.48);
  \draw (a\j)--(u\j)--(b\j)--(v\j)--cycle (u\j)--(v\j);
  \foreach \p in {a\j,b\j,u\j,v\j} \node[vertex] at (\p) {};
 }
 \draw (x)--(a1) (b1)--(3.45,0) (4.55,0)--(a2) (b2)--(y);
 \node at (4,0) {$\cdots$};
 \foreach \p in {x,l1,l2,y,r1,r2} \node[vertex] at (\p) {};
 \node[below=3pt] at (x) {$x$}; \node[below=3pt] at (y) {$y$};
 \node at (2,.88) {$D_1$}; \node at (6,.88) {$D_k$};
 \node[anchor=east] at (-.7,0) {$G:$};
\end{scope}
\begin{scope}[yshift=-1.2cm]
 \coordinate (x) at (.55,0);
 \coordinate (l1) at (0,.45); \coordinate (l2) at (0,-.45);
 \coordinate (y) at (7.45,0);
 \coordinate (r1) at (8,.45); \coordinate (r2) at (8,-.45);
 \draw (x)--(l1)--(l2)--cycle;
 \draw (y)--(r1)--(r2)--cycle;
 \draw (l1)--(-.4,.75) (l2)--(-.4,-.75)
       (r1)--(8.4,.75) (r2)--(8.4,-.75);
 \draw (x)--(y) node[midway,above=3pt] {$f=xy$};
 \foreach \p in {x,l1,l2,y,r1,r2} \node[vertex] at (\p) {};
 \node[below=3pt] at (x) {$x$}; \node[below=3pt] at (y) {$y$};
 \node[anchor=east] at (-.7,0) {$G_0:$};
\end{scope}
\draw[-{Stealth[length=2mm]}] (4,.55)--(4,-.45)
 node[midway,right=5pt] {replace the chain by one edge};
\end{tikzpicture}
\caption{Reduction in the base graph. The two triangle blocks and their
attachment vertices $x,y$ are retained. The diamond chain is replaced
by an edge $f=xy$, whose endpoints and chain length are recorded.
The initial colouring is constructed on $T(G_0)$; the diamonds are then
restored to obtain a colouring of $T(G)$. Unfinished edges continue to
the rest of the base.}
\label{fig:diamond-reduction}
\end{figure}

\FloatBarrier
\subsection{Colouring the truncation of the reduced base}\label{dt:section}

To colour the truncation of the reduced base from
Lemma~\ref{cf:reduced-base}, we work in the following more general
setting. Let $G_0$ be a simple cubic graph with a fixed partition
of its vertex set into triangles, and let $Q$ be obtained by
contracting them. Edges outside the triangles form a matching, so
$Q$ is loopless and cubic, and $G_0\cong T(Q)$. Parallel edges and
bridges in $Q$ are allowed. Brooks' theorem reduces each connected
component of $Q$ to the $3$-vertex-colourable case or the exceptional
case $K_4$; see the proof of Lemma~\ref{cf:reduced-colouring}.
We treat these cases separately, coordinating the boundary signatures
of the $9$-vertex pieces associated with the triangle blocks of $G_0$.

For each vertex $v$ of $Q$, let $X_v$ be the $9$-vertex piece of
$T(G_0)$ formed by truncating the vertices of the corresponding
triangle block of $G_0$. It contains three canonical triangles and
has three ports, one for each incidence at $v$ in $Q$. For an edge
$e=uv$ of $Q$, let $\iota(e)$ be the external edge of $T(G_0)$ arising
from the edge of $G_0$ corresponding to $e$. It joins a port
of $X_u$ to a port of $X_v$ and is called an \emph{interblock edge}.

Label the vertices of a model piece $X$ by $p_0,\ldots,p_8$, with
ports $p_0,p_5,p_8$ and edge set
\begin{equation}
\label{dt:gadget-edges}
\begin{split}
 &p_0p_1,\ p_0p_2,\ p_1p_2,\ p_1p_3,\ p_2p_4,\ p_3p_5,\\
 &p_3p_6,\ p_5p_6,\ p_4p_7,\ p_4p_8,\ p_7p_8,\ p_6p_7.
\end{split}
\end{equation}
The three canonical triangles are $p_0p_1p_2$, $p_3p_5p_6$ and
$p_4p_7p_8$. Attaching one pendant edge at each port gives the
\emph{augmented piece}; the pendant edges represent the interblock
edges before gluing. See Figure~\ref{dt:gadget-figure}.
Every permutation of the ports is induced by an automorphism of $X$,
obtained by permuting the vertices of the base triangle.
The pendant edges are pairwise nonconflicting, since their ports lie
in distinct canonical triangles and no edge of $X$ joins two ports.

\begin{figure}[htbp]
\centering
\begin{tikzpicture}[scale=.9,line width=.6pt,
  vertex/.style={circle,fill=black,inner sep=1.5pt},
  port/.style={circle,draw=black,fill=white,inner sep=2.1pt},
  every label/.style={font=\small}]
\coordinate (p0) at (0,3.2);
\coordinate (p1) at (-.55,2.25);
\coordinate (p2) at (.55,2.25);
\coordinate (p3) at (-1.35,.9);
\coordinate (p4) at (1.35,.9);
\coordinate (p5) at (-2.4,.9);
\coordinate (p6) at (-1.85,0);
\coordinate (p7) at (1.85,0);
\coordinate (p8) at (2.4,.9);
\draw (p0)--(p1)--(p2)--cycle;
\draw (p3)--(p5)--(p6)--cycle;
\draw (p4)--(p7)--(p8)--cycle;
\draw (p1)--(p3) (p2)--(p4) (p6)--(p7);
\draw[dashed] (p0)--(0,4) (p5)--(-3.2,.9) (p8)--(3.2,.9);
\node[port,label=above left:$p_0$] at (p0) {};
\node[vertex,label=left:$p_1$] at (p1) {};
\node[vertex,label=right:$p_2$] at (p2) {};
\node[vertex,label=right:$p_3$] at (p3) {};
\node[vertex,label=left:$p_4$] at (p4) {};
\node[port,label=above:$p_5$] at (p5) {};
\node[vertex,label=below:$p_6$] at (p6) {};
\node[vertex,label=below:$p_7$] at (p7) {};
\node[port,label=above:$p_8$] at (p8) {};
\end{tikzpicture}
\caption{The $9$-vertex piece $X$, with ports $p_0,p_5,p_8$ shown as open
circles. Dashed segments represent the three interblock edges.}
\label{dt:gadget-figure}
\end{figure}

We use the signature notation of Section~\ref{pre:section} at the
level of pieces. For an incidence $(v,e)$ of $Q$, denote by $P_v(e)$
the exposed pair at the corresponding port of $X_v$, where the
subscript $v$ now identifies the piece. The pair consists of the colours
of the two internal edges incident with that port. It is attached to
the incidence, rather than to the colour of the interblock edge.
This leaves two tasks: realise suitable boundary prescriptions within
each piece, and coordinate those prescriptions across the edges of $Q$.
The following gluing statement is the local criterion of
Lemma~\ref{pre:interface} applied to whole pieces.

\Needspace{8\baselineskip}
\begin{lemma}
\label{dt:gluing}
Let $G_0$ be a simple cubic graph with a fixed partition
of its vertex set into triangles, and let $Q$ be obtained by
contracting these triangles. Let $X_v$ be the $9$-vertex piece
of $T(G_0)$ associated with $v\in V(Q)$. For each $v$, let $\varphi_v$ be a strong edge-colouring
of the augmented piece $X_v$ with colours in $\Omega$. Suppose that,
for every edge $e=uv$ of $Q$, the two pendant representatives of
$\iota(e)$ receive the same colour and their exposed pairs satisfy
\[
 P_u(e)\cap P_v(e)=\varnothing.
\]
Then the colourings $\varphi_v$ induce a strong edge-colouring of
$T(G_0)$ with colours in $\Omega$.
\end{lemma}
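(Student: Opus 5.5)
The plan is to reduce the statement to Lemma~\ref{pre:interface}, applied to the loopless cubic multigraph $G_0$ itself. Each canonical triangle of $T(G_0)$ lies in exactly one piece $X_v$. Each external edge of $T(G_0)$ is of one of two kinds: it is internal to some piece, coming from an edge of a triangle block of $G_0$, or it is an interblock edge $\iota(e)$. So the colourings $\varphi_v$ assign a colour to every edge of $T(G_0)$. The only possible ambiguity is the colour of $\iota(e)$, which is represented by two pendant edges, and the hypothesis makes these two colours equal. Hence the $\varphi_v$ induce a well-defined map $\varphi\colon E(T(G_0))\to\Omega$.

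Next I will read off the local states. For every vertex $w$ of $G_0$, the six edges around $\Delta_w$ all lie in the augmented piece $X_v$ containing $w$. If $w$ is a port, one of these six edges is a pendant edge standing for an interblock edge. Because $\varphi_v$ is strong on the augmented piece, these six edges receive six distinct colours, so $(c_w,h_w)$ is a genuine local state. It then remains to verify~\eqref{pre:exposed} for every edge $g=ww'$ of $G_0$. There are two cases.
\begin{itemize}
\item If $g$ lies inside a triangle block, then both $\Delta_w$ and $\Delta_{w'}$ lie in the same piece $X_v$. The condition holds by the necessity direction of Lemma~\ref{pre:interface}, applied to the ordinary edges of the augmented piece, which is the augmented truncation of a cubic pole.
\item If $g$ corresponds to an edge $e=uv$ of $Q$, then the two external colours agree by hypothesis. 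The exposed pairs at the two ports are exactly $P_u(e)$ and $P_v(e)$, and these are disjoint by assumption.
\end{itemize}
The sufficiency direction of Lemma~\ref{pre:interface} then yields a strong edge-colouring of $T(G_0)$.

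The only point needing care is identification. One must check that the exposed pair measured at a port of $X_v$ in the augmented piece coincides with $P_w(g)$ in $T(G_0)$, and that the interblock pendant edges correspond bijectively to the edges of $Q$. Parallel edges of $Q$ are harmless here, since they give distinct edges of $G_0$ with distinct ports, and $G_0$ is simple. Both identifications are immediate from the definitions of the augmented truncation of a pole and of $\iota$. I therefore expect no real obstacle; the main work is simply bookkeeping.
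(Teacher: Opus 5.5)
Your proposal is correct and follows the paper's own proof. Both arguments take the valid local states and intra-piece interfaces from the strong colourings of the augmented pieces, observe that the only new interfaces are the interblock edges (equal colours, disjoint exposed pairs by hypothesis), and conclude by the sufficiency direction of Lemma~\ref{pre:interface}; you simply spell out the well-definedness and the identification of exposed pairs that the paper leaves implicit.
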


\begin{proof}
Each augmented piece has valid $6$-colour states and compatible
interfaces within the piece. Joining pendant representatives with the
same colour preserves all these local states. The only new interfaces
are the interblock edges, whose exposed pairs are disjoint by
hypothesis. Lemma~\ref{pre:interface} therefore gives the strong
colouring of $T(G_0)$.
\end{proof}

We only need pieces whose three interblock colours are all equal or
use exactly two colours. The following lemma gives boundary
prescriptions for these two patterns, denoted by $(a,a,a)$ and
$(a,a,b)$. Its colour-permutation argument allows the exposed pairs
to be chosen when the pieces are assembled.

\Needspace{10\baselineskip}
\begin{lemma}
\label{dt:realisability}
Let $X$ be the $9$-vertex piece with edge set~\eqref{dt:gadget-edges}
and ports $p_0,p_5,p_8$. Each of the following boundary prescriptions
extends to a strong edge-colouring of its augmented piece with
colours in $\Omega$.
\begin{enumerate}[label=(\roman*)]
\item For $a\in\{1,2,3\}$ and a $2$-element set
$P\subseteq\Omega\setminus\{a\}$, all three pendant edges have
colour $a$ and all three exposed pairs equal $P$.
\item For distinct $a,b\in\{1,2,3\}$ and
$\gamma\in\Omega\setminus\{a,b\}$, any prescribed two pendant
edges have colour $a$ and exposed pair $\{b,\gamma\}$, while the
remaining pendant edge has colour $b$ and exposed pair
$\{a,\gamma\}$.
\end{enumerate}
\end{lemma}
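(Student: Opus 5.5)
The statement is invariant under two kinds of symmetry, so I would fix one normalised instance of each prescription, colour it explicitly, and transfer it. First, any permutation of $\Omega$ applied to a strong edge-colouring of the augmented piece gives another strong edge-colouring. It transforms pendant colours and exposed pairs accordingly. Second, as noted before the statement, every permutation of the ports $p_0,p_5,p_8$ is induced by an automorphism of $X$. It therefore suffices to realise
\begin{itemize}
\item[(i)] $a=1$ and $P=\{2,3\}$ at all three ports, and
\item[(ii)] $a=1$, $b=2$ and $\gamma=3$, with the two $a$-ports being $p_0,p_5$ and the $b$-port being $p_8$.
\end{itemize}
A permutation of $\Omega$ sending $1,2,3$ to $a,b,\gamma$ (respectively sending $1$ to $a$ and $\{2,3\}$ to $P$) then gives the general case, followed by a suitable automorphism.

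Write $T_0=p_0p_1p_2$, $T_1=p_3p_5p_6$ and $T_2=p_4p_7p_8$. The ordinary edges joining them are $p_1p_3$, $p_2p_4$ and $p_6p_7$. By Lemma~\ref{pre:interface}, I only need a local $6$-colour state at each $T_i$ and condition \eqref{pre:hidden} on these three edges. If a port has exposed pair $P$, then $P\subseteq I_{T_i}$, and the hidden colour opposite the port is the third internal colour.

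For (i), each $I_{T_i}$ has the form $\{2,3,h_i\}$ with $h_i\in\{4,5,6\}$. Two adjacent triangles must have $h_i\ne h_j$: otherwise $|I_{T_i}\cap I_{T_j}|=3$, which \eqref{pre:hidden} forbids. So I take $h_0,h_1,h_2=4,5,6$. The edge $T_iT_j$ gets the remaining colour of $\{4,5,6\}$, which makes the external colours at each triangle exactly $1$ together with the two colours other than $h_i$. On edge $T_iT_j$ the intersection is $\{2,3\}$, so the hidden colours opposite it must be $2$ at one end and $3$ at the other. Orient the triangle $T_0T_1T_2$ cyclically, and at each $T_i$ place $2$ opposite the outgoing edge and $3$ opposite the incoming one. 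Every edge then receives $\{2,3\}$.

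For (ii), the internal palettes are $I_{T_0}=\{2,3,h_0\}$, $I_{T_1}=\{2,3,h_1\}$ with $h_0\ne h_1$ in $\{4,5,6\}$, and $I_{T_2}=\{1,3,c\}$, where $c$ is the third colour of $\{4,5,6\}$. I colour $T_0T_1$ with $c$, $T_0T_2$ with $h_1$, and $T_1T_2$ with $h_0$. This makes the external sets $\{1,c,h_1\}$, $\{1,c,h_0\}$ and $\{2,h_0,h_1\}$, each complementary to its palette. The intersections are:
\begin{itemize}
\item on $T_0T_1$, $\{2,3\}$;
\item on $T_0T_2$ and on $T_1T_2$, $\{3\}$.
\end{itemize}
Put the hidden colour $c$ opposite $p_8$, put $3$ at $T_2$ opposite the edge to $T_0$, and $1$ at $T_2$ opposite the edge to $T_1$. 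Then $T_1$ has $3$ opposite $T_2$ and $2$ opposite $T_0$, and $T_0$ has $3$ opposite $T_1$ and $2$ opposite $T_2$. Each intersection is then covered by the hidden colours, and the hidden colours opposite $p_0$ and $p_5$ are $h_0$ and $h_1$, giving exposed pair $\{2,3\}$ as required.

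The only real obstacle is the bookkeeping in (ii): choosing the hidden bijections so that all three interfaces are covered simultaneously. The palette analysis above forces the external colours and reduces this to the short consistent assignment just described. The final check is a routine verification of \eqref{pre:hidden}, after which Lemma~\ref{pre:interface} finishes the proof.
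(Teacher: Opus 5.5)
Your proof is correct and follows essentially the paper's route: it realises one normalised instance of each pattern explicitly, then transfers it to the general case by a colour permutation and a port automorphism of $X$. The only difference is that you derive the two base colourings (with $P=\{2,3\}$, and with the $b$-port at $p_8$) from the criterion~\eqref{pre:hidden} rather than listing them in a table, which also explains why the internal palettes and interblock colours must take the form they do.
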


\begin{proof}
Table~\ref{dt:local-table} gives two base colourings.  The first has
interblock colours $(1,1,1)$ and exposed pair $\{4,5\}$ at every port.
The second has interblock colours $(2,1,1)$ at $(p_0,p_5,p_8)$, respectively;
its exposed pairs are $\{1,4\}$ at $p_0$ and $\{2,4\}$ at $p_5,p_8$.

\begin{table}[!htbp]
\centering
\begin{tabular}{ccc}
\toprule
Edge & Base $(1,1,1)$ & Base $(2,1,1)$ \\
\midrule
$p_0p_1$ & 4 & 1 \\
$p_0p_2$ & 5 & 4 \\
$p_1p_2$ & 6 & 6 \\
$p_1p_3$ & 2 & 3 \\
$p_2p_4$ & 3 & 5 \\
$p_3p_5$ & 5 & 4 \\
$p_3p_6$ & 3 & 5 \\
$p_5p_6$ & 4 & 2 \\
$p_4p_7$ & 2 & 3 \\
$p_4p_8$ & 4 & 2 \\
$p_7p_8$ & 5 & 4 \\
$p_6p_7$ & 6 & 6 \\
\midrule
Interblock edge at $p_0$ & 1 & 2 \\
Interblock edge at $p_5$ & 1 & 1 \\
Interblock edge at $p_8$ & 1 & 1 \\
\bottomrule
\end{tabular}
\caption{Two explicit strong colourings of the augmented $9$-vertex piece.}
\label{dt:local-table}
\end{table}

In either column, each canonical triangle and its three incident
external edges use all six colours.  Across the three edges joining different triangles,
the exposed pairs are, in the order $p_1p_3,p_2p_4,p_6p_7$,
\[
\begin{array}{c|ccc}
 &p_1p_3&p_2p_4&p_6p_7\\ \hline
(1,1,1)&\{4,6\},\{3,5\}&\{5,6\},\{2,4\}&\{3,4\},\{2,5\}\\
(2,1,1)&\{1,6\},\{4,5\}&\{4,6\},\{2,3\}&\{2,5\},\{3,4\}.
\end{array}
\]
Each displayed pair of sets is disjoint.  These checks account for
all strong conflicts in the augmented piece.

For a prescribed $(a,a,a)$ state, apply a colour permutation sending
$1$ to $a$ and $\{4,5\}$ to $P$ in the first column of
Table~\ref{dt:local-table}.
For a prescribed $(a,a,b)$ state, apply a permutation sending
$1$ to $a$, $2$ to $b$ and $4$ to $\gamma$ in the second column.
A port automorphism places its unique $b$-port at the required
incidence. Colour permutations and graph automorphisms preserve
the strong edge-colouring property, proving both assertions.
\end{proof}

Suppose first that $Q$ admits a proper $3$-vertex-colouring.
We use its vertex colours to choose both the interblock colours and
the exposed pairs. At each vertex, at most two interblock colours
occur, so every resulting prescription has one of the two patterns
covered by Lemma~\ref{dt:realisability}.

\begin{proposition}
\label{dt:three-vertex-colouring}
Let $G_0$ be a simple cubic graph with a fixed partition
of its vertex set into triangles, and let $Q$ be obtained by
contracting these triangles. If $Q$ admits a proper vertex-colouring
with three colours, then
\[
 \sidx(T(G_0))=6.
\]
\end{proposition}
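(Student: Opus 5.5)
The plan is to fix a proper vertex-colouring $\kappa:V(Q)\to\{1,2,3\}$ and use it to write down, for every incidence of $Q$, both the interblock colour and the exposed pair. Each piece will then receive a boundary prescription covered by Lemma~\ref{dt:realisability}, and Lemma~\ref{dt:gluing} will assemble the pieces. The lower bound $\sidx(T(G_0))\ge 6$ is \eqref{pre:lowerbound}, so only the colouring needs to be constructed.

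\emph{Interblock colours.} For an edge $e=uv$ of $Q$ (parallel edges treated separately), let $c(\iota(e))$ be the unique element of $\{1,2,3\}\setminus\{\kappa(u),\kappa(v)\}$. This is well defined because $\kappa(u)\ne\kappa(v)$, and both pendant representatives of $\iota(e)$ receive it. At a vertex $v$ with $\kappa(v)=k$, every incident interblock colour lies in $\{1,2,3\}\setminus\{k\}$. Hence the three colours at $X_v$ follow pattern $(a,a,a)$ or $(a,a,b)$ with $a,b\ne k$, and these are exactly the patterns of Lemma~\ref{dt:realisability}.

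\emph{Exposed pairs.} Put $\gamma_k=k+3$ and aim for
\[
P_v(e)=\{\kappa(u),\gamma_{\kappa(v)}\}\qquad\text{for every edge }e=uv .
\]
The key observation is that $\kappa(u)$ equals the element of $\{1,2,3\}\setminus\{\kappa(v),c(\iota(e))\}$.
\begin{itemize}
\item In pattern $(a,a,b)$ at $v$, the $a$-ports have neighbours coloured $b$, and the $b$-port has a neighbour coloured $a$. The target pairs are therefore $\{b,\gamma\}$ and $\{a,\gamma\}$ with $\gamma=\gamma_{\kappa(v)}\notin\{a,b\}$. This is precisely prescription (ii), with the $b$-port placed by the port automorphism.
\item In pattern $(a,a,a)$, all three neighbours share the colour $\kappa(u)$, the one distinct from $k$ and $a$. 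The target pair $P=\{\kappa(u),\gamma_k\}$ avoids $a$, so prescription (i) applies.
\end{itemize}
Realise these prescriptions in every $X_v$.

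\emph{Compatibility.} For $e=uv$ we have $P_u(e)=\{\kappa(v),\gamma_{\kappa(u)}\}$ and $P_v(e)=\{\kappa(u),\gamma_{\kappa(v)}\}$. These are disjoint because $\kappa(u)\ne\kappa(v)$, the $\gamma$'s lie in $\{4,5,6\}$, and the $\kappa$'s lie in $\{1,2,3\}$. Lemma~\ref{dt:gluing} then gives a strong $6$-edge-colouring of $T(G_0)$.

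The main obstacle is the identification of the exposed pairs with the neighbour colours, in particular checking that the monochromatic pattern is consistent with the same rule. Parallel edges, and ensuring that each realisation treats the incidences individually, need only the observation that everything above is attached to incidences, not to neighbours.
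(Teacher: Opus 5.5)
Your proof is correct and is essentially the paper's argument: your edge colour $\{1,2,3\}\setminus\{\kappa(u),\kappa(v)\}$ is the paper's type map $RB\mapsto1$, $RV\mapsto2$, $BV\mapsto3$ under the relabelling $V\leftrightarrow1$, $B\leftrightarrow2$, $R\leftrightarrow3$. Your exposed pair $\{\kappa(u),\gamma_{\kappa(v)}\}$ is the paper's $(S_C\setminus\{c\})\cup\{\eta_C\}$ up to renaming the reserved colours, because the other available type at $v$ is exactly the neighbour's colour. Writing the pair in terms of the neighbour's colour reduces the disjointness check to one line, where the paper instead lists the three edge types in a table.
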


\begin{proof}
Write the vertex colours as $R,B,V$. We first assign the interblock
colours, and then choose exposed pairs that make every join compatible.
Assign each edge of $Q$ its \emph{type} by
\[
 RB\longmapsto1,\qquad RV\longmapsto2,\qquad BV\longmapsto3.
\]
Give $\iota(e)$ the type of $e$. Although this need not be a proper
edge-colouring of $Q$, the interblock edges at any augmented piece are
pairwise nonconflicting.

For a vertex colour $C\in\{R,B,V\}$, the available interblock
types form the $2$-element set
\[
 S_R=\{1,2\},\qquad S_B=\{1,3\},\qquad S_V=\{2,3\}.
\]
The multiset of incident types is therefore either $(a,a,a)$ or
$(a,a,b)$.

For the exposed pairs, reserve one additional colour for each vertex
colour: set $\eta_R=4$, $\eta_B=5$, and $\eta_V=6$. Each pair will
contain this reserved colour and the other available interblock type.
Thus, if an incidence at a vertex of colour $C$ has type $c\in S_C$,
prescribe its exposed pair as $(S_C\setminus\{c\})\cup\{\eta_C\}$.
For pattern $(a,a,a)$ this is one common pair avoiding $a$.
For pattern $(a,a,b)$ it prescribes $\{b,\eta_C\}$ at the two
$a$-ports and $\{a,\eta_C\}$ at the $b$-port.  Thus every
prescription is realised by Lemma~\ref{dt:realisability}.

For each edge type, the prescriptions at its two endpoints are
\[
\begin{array}{c|c|c|c}
\text{type}&\text{endpoint colours}&\text{first exposed pair}&\text{second exposed pair}\\ \hline
1&R,B&\{2,4\}&\{3,5\}\\
2&R,V&\{1,4\}&\{3,6\}\\
3&B,V&\{1,5\}&\{2,6\}.
\end{array}
\]
The exposed pairs in each row are disjoint, so
Lemma~\ref{dt:gluing} gives the required colouring.

The lower bound follows from \eqref{pre:lowerbound}, applied to $G_0$.
\end{proof}

We now handle the remaining connected skeleton $Q=K_4$, for which
$G_0\cong T(K_4)$.

\begin{proposition}
\label{dt:k4-certificate}
Let $G_0=T(K_4)$. Then $T(G_0)$ admits a strong $6$-edge-colouring.
\end{proposition}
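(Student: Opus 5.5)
We plan to reuse the piece-based framework of Lemma~\ref{dt:gluing}. The pieces are the four $9$-vertex pieces $X_v$, one for each $v\in V(K_4)$, and they are joined along the six interblock edges $\iota(e)$ with $e\in E(K_4)$. Since $K_4$ has no proper $3$-vertex-colouring, Proposition~\ref{dt:three-vertex-colouring} does not apply directly. We therefore look for an assignment of interblock colours and exposed pairs under which every piece is realisable and every interblock edge satisfies $P_u(e)\cap P_v(e)=\varnothing$.

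\textbf{Attempt 1: pieces realised by Lemma~\ref{dt:realisability}.} First we try to assign boundary data so that every piece has pattern $(a,a,a)$ or $(a,a,b)$; each piece is then realised by Lemma~\ref{dt:realisability}. Take the vertices of $K_4$ to be $1,2,3,4$. Assigning interblock colours to the six edges so that each vertex sees at most two colours amounts to a colouring of $E(K_4)$ in which every vertex star uses at most two colours. An example is colour $a$ on the triangle $123$ and colour $b$ on the three edges at $4$. Vertex $4$ then has pattern $(b,b,b)$ with a free pair $P$, and vertices $1,2,3$ have pattern $(a,a,b)$ with pairs $\{b,\gamma_i\}$ at the $a$-ports and $\{a,\gamma_i\}$ at the $b$-port.

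The difficulty is on the triangle. The edge $12$ joins two $a$-ports, with exposed pairs $\{b,\gamma_1\}$ and $\{b,\gamma_2\}$, and these share $b$. So this choice fails. Other distributions run into the same issue: in the $(a,a,b)$ realisation the two $a$-ports share the colour $b$ in their exposed pairs. We will therefore first check whether some edge colouring of $K_4$ using three colours avoids this conflict. One candidate is a perfect matching in colour $1$ with the remaining $4$-cycle coloured $2,3,2,3$; this gives three colours at every vertex. If no two-colour-per-vertex assignment works, we enlarge the local catalogue as follows.

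\textbf{Attempt 2: a proper $3$-edge-colouring with a new local realisation.} We take a proper $3$-edge-colouring of $K_4$, in which each vertex sees interblock colours $1,2,3$. We then build one further explicit strong colouring of the augmented piece $X$ with interblock colours $(1,2,3)$ at $(p_0,p_5,p_8)$. By symmetry, every vertex uses the same pattern up to port automorphisms. In a rainbow assignment, each exposed pair at the $c$-port lies in $\Omega\setminus\{c\}$, and an edge of colour $c$ requires its two pairs to be disjoint in $\{1,\dots,6\}\setminus\{c\}$. A natural target is to put the pairs at the $c$-ports inside $\{4,5,6\}\cup\{1,2,3\}\setminus\{c\}$. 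Colour permutations fixing $1,2,3$ setwise, combined with port automorphisms, then give enough freedom to make the two ends of each edge disjoint.

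\textbf{The fallback: a direct certificate.} If this symmetric approach does not close, the fallback is a direct certificate. We give an explicit table of local states for all $12$ canonical triangles of $T(T(K_4))$, in the style of Table~\ref{bip:seed-table}. We list, for each of the $18$ external edges, the pair of sets from \eqref{pre:hidden} and verify the containment for each, exactly as in Lemma~\ref{bip:heawood-seed}. Lemma~\ref{pre:interface} then finishes the proof. The graph is small, with $36$ vertices, so such a certificate can be found by a short search and checked by hand.

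\textbf{Expected main obstacle.} We expect the main difficulty to be the design step, that is, finding the local data, not the verification. The shared colour in exposed pairs under pattern $(a,a,b)$ shows that the existing two patterns cannot suffice on the triangles of $K_4$. At least one genuinely new piece colouring, or a global certificate, is therefore needed.
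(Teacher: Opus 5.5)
There is a genuine gap: the proposal never produces a colouring. Attempt~1 is rightly discarded, Attempt~2 is left as a plan, and the fallback only asserts that a certificate ``can be found by a short search''. For a statement about one fixed finite graph, exhibiting and checking that certificate \emph{is} the proof, and it is exactly what the paper does. It lists local states $(c_q,h_q)$ for the twelve vertices of $G_0=T(K_4)$, with ports ordered by neighbours. It observes that the external colours agree on all eighteen edges of $G_0$, tabulates the eighteen pairs of exposed pairs, checks that each pair is disjoint, and invokes Lemma~\ref{pre:interface}. Your fallback names this route but supplies none of its content, so nothing has yet been proved.

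Moreover, the mechanism of Attempt~2 fails as stated. Consider a piece with interblock colours $1,2,3$. The connecting edge between the triangles of the $a$- and $b$-ports avoids $a$ and $b$. At most one connecting edge can take a colour in $\{1,2,3\}$, since two such edges would give two triangles of the piece the same internal palette.

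\emph{First kind.} Suppose all three connecting colours lie in $\{4,5,6\}$. Then the triangle at the $c$-port has palette $\{a,b,w_c\}$, where $\{a,b,c\}=\{1,2,3\}$. Every partner port in this family also meets $\{a,b\}$, so the exposed pair must be $\{a,w_c\}$ or $\{b,w_c\}$. The three internal interfaces then force a cyclic choice: the colour from $\{1,2,3\}$ in the pair at the $c$-port is $c+1$ for every $c$, or $c-1$ for every $c$ (cyclically). Disjointness across a colour-$c$ interblock edge requires opposite choices at its two ends, which would be a proper $2$-colouring of $K_4$.

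\emph{Second kind.} Suppose instead that one connecting edge carries a colour from $\{1,2,3\}$. Then the internal interfaces force all three exposed pairs of the piece to contain the colour $t\in\{4,5,6\}$ missing from the connecting edges. The four pieces are pairwise adjacent, so they would need four distinct values of $t$.

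Colour permutations preserving $\{1,2,3\}$ and port automorphisms keep a piece within its kind. Hence one new piece colouring plus symmetry can never close $K_4$. The idea can be rescued by mixing kinds: two pieces of the first kind with opposite orientations and two of the second kind with different values of $t$ can be made compatible. That, however, again means writing down and verifying explicit local data, which is the step missing from your proposal.
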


\begin{proof}
Label the vertices of $G_0$ by
$0,\ldots,11$ so that its four triangle blocks have vertex sets
\[
 \{0,1,2\},\quad\{3,4,5\},\quad\{6,7,8\},\quad\{9,10,11\},
\]
and the six edges between these triangles are
\[
 03,\quad16,\quad29,\quad47,\quad5\,10,\quad8\,11.
\]
For a vertex $q$ of $G_0$, list its neighbours in increasing order as
$(q_1,q_2,q_3)$. In Table~\ref{dt:k4-table}, the external and opposite
internal triples list the values of $c_q(qq_i)$ and $h_q(qq_i)$,
respectively, for $i=1,2,3$. This is the local-state notation of
Section~\ref{pre:section}, with the ports ordered by their neighbours.
Use these triples to colour $T(G_0)$.

\begin{table}[!htbp]
\centering
\begin{tabular}{c|c|c|c}
\hline
$q$ & $(q_1,q_2,q_3)$ & External & Opposite internal\\
\hline
0 & $(1,2,3)$ & $(2,3,1)$ & $(5,4,6)$\\
1 & $(0,2,6)$ & $(2,5,4)$ & $(6,1,3)$\\
2 & $(0,1,9)$ & $(3,5,6)$ & $(2,1,4)$\\
3 & $(0,4,5)$ & $(1,4,5)$ & $(6,3,2)$\\
4 & $(3,5,7)$ & $(4,2,6)$ & $(5,1,3)$\\
5 & $(3,4,10)$ & $(5,2,3)$ & $(6,1,4)$\\
6 & $(1,7,8)$ & $(4,1,5)$ & $(6,3,2)$\\
7 & $(4,6,8)$ & $(6,1,2)$ & $(5,4,3)$\\
8 & $(6,7,11)$ & $(5,2,6)$ & $(3,4,1)$\\
9 & $(2,10,11)$ & $(6,1,2)$ & $(4,5,3)$\\
10 & $(5,9,11)$ & $(3,1,4)$ & $(6,2,5)$\\
11 & $(8,9,10)$ & $(6,2,4)$ & $(3,5,1)$\\
\hline
\end{tabular}
\caption{A complete local-state certificate for $T(G_0)$ when $G_0=T(K_4)$.}
\label{dt:k4-table}
\end{table}

In every row, the two colour triples together contain exactly
$1,\ldots,6$. The external colours agree at both endpoints of every
edge of $G_0$, so the colouring is well-defined. Each exposed pair
$P_q(qq_i)$ is obtained by deleting $h_q(qq_i)$ from the opposite
internal triple.
All eighteen interfaces are listed below. A two-digit string $ij$
denotes the set $\{i,j\}$; an edge $q r$ refers to its two vertex labels.
\[
\begin{array}{c|cc@{\qquad}c|cc}
 e&P_q(e)&P_r(e)&e&P_q(e)&P_r(e)\\ \hline
 0\,1&46&13&0\,2&56&14\\
 0\,3&45&23&1\,2&36&24\\
 1\,6&16&23&2\,9&12&35\\
 3\,4&26&13&3\,5&36&14\\
 4\,5&35&46&4\,7&15&34\\
 5\,10&16&25&6\,7&26&35\\
 6\,8&36&14&7\,8&45&13\\
 8\,11&34&15&9\,10&34&56\\
 9\,11&45&13&10\,11&26&35
\end{array}
\]
Every displayed pair of sets is disjoint, so Lemma~\ref{pre:interface}
gives the required colouring.
\end{proof}

These two constructions give the following result, with no
connectivity or bridgelessness assumption on $G_0$.

\begin{lemma}\label{cf:reduced-colouring}
Let $G_0$ be a simple cubic graph whose vertex set has
a partition into triangles. Then
\[
 \sidx(T(G_0))=6.
\]
\end{lemma}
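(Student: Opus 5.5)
The proof reduces to the two propositions just established, applied componentwise. The lower bound $\sidx(T(G_0))\ge 6$ is \eqref{pre:lowerbound}, applied to any canonical triangle of $T(G_0)$, so only the upper bound needs an argument. Contracting the triangles of the given partition yields a loopless cubic multigraph $Q$ with $G_0\cong T(Q)$. The edges outside the triangles form a perfect matching of $G_0$, which is why $Q$ has no loops; this is the setting described at the start of Subsection~\ref{dt:section}. The graph $G_0$ need not be connected. However, $T(G_0)$ is the disjoint union of the truncations of the components of $G_0$, and each component of $G_0$ is the truncation of a component of $Q$. So it suffices to colour each $T(G_0')$ with the six colours of $\Omega$, where $G_0'$ is a component of $G_0$ and $Q'$ is the corresponding connected component of $Q$. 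The union of these colourings is then a strong colouring, because edges in different components are never in strong conflict.

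For a connected component $Q'$, we split into cases according to whether $Q'$ is $3$-vertex-colourable, using Brooks' theorem. Parallel edges do not affect vertex colouring, so we apply Brooks' theorem to the underlying simple graph of $Q'$. That graph is connected with maximum degree at most $3$. It is not an odd cycle of length at least $5$, because it would then be $2$-regular, forcing every vertex of $Q'$ to have multiplicity pattern $2+1$ toward its two neighbours, which is impossible along an odd cycle. An odd cycle of length $3$ is still $3$-colourable, so it causes no trouble. Brooks' theorem therefore shows that the underlying graph is either $3$-colourable or isomorphic to $K_4$. In the second case $Q'$ is cubic with $K_4$ as its underlying graph, and degree counting forces $Q'=K_4$ exactly.

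In the $3$-colourable case, Proposition~\ref{dt:three-vertex-colouring} applied to $G_0'$ and $Q'$ gives $\sidx(T(G_0'))=6$. Note that this proposition needs no connectivity or bridgelessness assumption, since it uses only the vertex colouring. In the case $Q'=K_4$ we have $G_0'\cong T(K_4)$, and Proposition~\ref{dt:k4-certificate} supplies a strong $6$-edge-colouring of $T(G_0')$.

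The only step requiring care is the Brooks reduction for multigraphs. Here one must check that the odd-cycle exception cannot arise except in the harmless $3$-cycle case, and that the underlying graph $K_4$ forces $Q'=K_4$. Both checks are short degree arguments, so I expect no real obstacle.
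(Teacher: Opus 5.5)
Your proposal is correct and follows the paper's proof essentially step for step. You contract the triangles to obtain $Q$ with $G_0\cong T(Q)$, apply Brooks' theorem to the underlying simple graph of each component, invoke Proposition~\ref{dt:three-vertex-colouring} or Proposition~\ref{dt:k4-certificate}, and use \eqref{pre:lowerbound} for the lower bound. The only difference is your odd-cycle check: it is correct but unnecessary, since any graph of maximum degree at most two is trivially $3$-colourable, and the paper simply cites this elementary case.
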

\begin{proof}
Contract the triangles to obtain a loopless cubic multigraph $Q$,
so that $G_0\cong T(Q)$. Consider one connected component of $Q$.
Its underlying simple graph $S$ is connected and has maximum degree
at most three. If $S\ne K_4$, Brooks' theorem \cite{Brooks1941},
together with the elementary case of maximum degree at most two,
gives a proper
$3$-vertex-colouring of $S$ and hence of the component of $Q$;
apply Proposition~\ref{dt:three-vertex-colouring}.
If $S=K_4$, cubicity forces the component of $Q$ itself to be $K_4$,
so apply Proposition~\ref{dt:k4-certificate}.
Reuse the same palette on all components. The lower bound in
\eqref{pre:lowerbound}, applied to $G_0$, gives equality.
\end{proof}

\FloatBarrier
\subsection{Inserting diamonds and completing the proof}

Return now to the reduced base $G_0$ of Lemma~\ref{cf:reduced-base}, and let
$\varphi$ be the colouring of $T(G_0)$ supplied by
Lemma~\ref{cf:reduced-colouring}. If $f=xy$ replaced a non-empty
diamond chain, its endpoint signatures have the form
\[
 (c_f,U_f),\qquad(c_f,V_f),\qquad
 U_f\cap V_f=\varnothing,
 \qquad c_f\notin U_f\cup V_f.
\]
The signatures are measured at the retained ports corresponding to
$x,y$. The new edges incident with these ports must keep colour $c_f$.
The diamond port attached to $x$ must expose a pair disjoint from
$U_f$, and similarly at $y$.

Regard a diamond with tips $a,b$ and centres $u,v$ as a cubic $2$-pole
by adjoining one dangling half-edge at each tip. Its \emph{$2$-port
truncation} is the augmented truncation of Section~\ref{pre:section}.
It consists of the four canonical triangles at $a,b,u,v$,
the five external edges corresponding to $au,av,bu,bv,uv$, and a
pendant edge at the outside port of each tip triangle. The pendant
endpoints are auxiliary and are not truncated. Boundary signatures
are measured at the outside ports of $\Delta_a$ and $\Delta_b$.

\begin{lemma}
\label{cf:diamond-boundary}
Let $D$ be a diamond with tips $a,b$ and centres $u,v$. Let
$c\in\Omega$, and let $U,V$ be disjoint $2$-element subsets of
$\Omega\setminus\{c\}$. Then the $2$-port truncation of $D$ admits
a strong edge-colouring with colours in $\Omega$ whose boundary signatures
are
\[
 (c,V)\quad\text{at }a,
 \qquad (c,U)\quad\text{at }b.
\]
\end{lemma}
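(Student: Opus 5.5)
The plan is to reduce the statement to a single explicit colouring by a colour permutation, and then to verify that colouring with the criterion~\eqref{pre:hidden} of Lemma~\ref{pre:interface}. The sets $\{c\}$, $U$, $V$ are pairwise disjoint and have sizes $1,2,2$, so the remaining colour set is a $1$-element set. Some permutation of $\Omega$ therefore maps $c\mapsto1$, $U\mapsto\{2,3\}$ and $V\mapsto\{4,5\}$, and the complement to $\{6\}$. Colour permutations preserve strong colourings and transport signatures. It therefore suffices to realise the signatures $(1,\{4,5\})$ at $a$ and $(1,\{2,3\})$ at $b$.

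For this normalised case I will assign the five ordinary external edges as
\[
 c(au)=2,\quad c(av)=3,\quad c(bu)=4,\quad c(bv)=5,\quad c(uv)=6,
\]
and give both pendant edges colour $1$. The external colours at each vertex are distinct, and the internal palettes are
\[
 I_a=\{4,5,6\},\quad I_b=\{2,3,6\},\quad I_u=\{1,3,5\},\quad I_v=\{1,2,4\}.
\]
The outside exposed pair at $a$ must be $\{4,5\}$, which forces $h_a(\mathrm{out})=6$. Likewise the pair $\{2,3\}$ at $b$ forces $h_b(\mathrm{out})=6$. The intersections along the five ordinary edges are singletons:
\[
 I_a\cap I_u=\{5\},\quad I_a\cap I_v=\{4\},\quad I_b\cap I_u=\{3\},\quad I_b\cap I_v=\{2\},\quad I_u\cap I_v=\{1\}.
\]
For~\eqref{pre:hidden} I will place each shared colour opposite the corresponding port at one endpoint. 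This means setting $h_a(au)=5$, $h_a(av)=4$, $h_b(bu)=3$, $h_b(bv)=2$, and $h_u(uv)=h_v(uv)=1$. The remaining values of $h_u$ and $h_v$ are filled arbitrarily from $\{3,5\}$ and $\{2,4\}$, respectively. Each $h_x$ is then a bijection onto $I_x$.

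Lemma~\ref{pre:interface}, in its pole version, then shows that this defines a strong colouring of the $2$-port truncation. Its hypotheses are that every local state uses six distinct colours and that every ordinary edge satisfies~\eqref{pre:hidden}. By the last sentence of that lemma, the dangling edges impose only the local six-colour condition, which holds here. In particular, the two pendant edges of colour $1$ cause no conflict, because the tips $a,b$ are nonadjacent and their outside ports lie in distinct canonical triangles.

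The only real obstacle is finding a consistent choice of the five external colours. Once the pendant colour and the two outside exposed pairs are fixed, the external colours of $au,av$ are confined to $\{2,3,6\}$, and those of $bu,bv$ to $\{4,5,6\}$. The choice above leaves every pairwise palette intersection a singleton, so the hidden-colour conditions never compete at a vertex. I expect this step to be resolved by the explicit table rather than by any structural argument.
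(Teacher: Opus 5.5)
Your proposal is correct and follows essentially the paper's route: write down explicit local states on the four canonical triangles, verify them with Lemma~\ref{pre:interface}, and reach arbitrary $c,U,V$ by a colour permutation. Your external colours are those of the paper's table with the pairs $\{2,3\}$ and $\{4,5\}$ interchanged. On each tip--centre edge you place the shared internal colour opposite the port at the tip, whereas the paper places it at the centre; this difference is immaterial.
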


\begin{proof}
Use the colouring in Table~\ref{cf:diamond-table}. In each row, the
external triple gives the colours at the ordered ports, and the
internal triple gives the colours of the opposite triangle edges.
The first ports at $a,b$ are the boundary ports.

\begin{table}[!htbp]
\centering
\begin{tabular}{c c c c}
\toprule
Vertex & Ordered ports & External & Opposite internal\\
\midrule
$a$ & $(\mathrm{out},u,v)$ & $(1,4,5)$ & $(6,2,3)$\\
$b$ & $(\mathrm{out},u,v)$ & $(1,2,3)$ & $(6,4,5)$\\
$u$ & $(a,b,v)$ & $(4,2,6)$ & $(3,5,1)$\\
$v$ & $(a,b,u)$ & $(5,3,6)$ & $(2,4,1)$\\
\bottomrule
\end{tabular}
\caption{A strong $6$-edge-colouring of the $2$-port truncation of a
diamond. The signatures at its two outside ports are
$(1,\{2,3\})$ and $(1,\{4,5\})$.}
\label{cf:diamond-table}
\end{table}

Each row contains all six colours. The five internal interfaces of
the $2$-port piece are checked explicitly below:
\[
\begin{array}{c|c|c|c}
\text{base edge}&\text{external colour}&\text{first exposed pair}&\text{second exposed pair}\\ \hline
au&4&\{6,3\}&\{5,1\}\\
av&5&\{6,2\}&\{4,1\}\\
bu&2&\{6,5\}&\{3,1\}\\
bv&3&\{6,4\}&\{2,1\}\\
uv&6&\{3,5\}&\{2,4\}
\end{array}
\]
The external colours agree and the exposed pairs are disjoint at
every interface, so Lemma~\ref{pre:interface} applies. The boundary
signatures are $(1,\{2,3\})$ at $a$ and $(1,\{4,5\})$ at $b$.

Choose a permutation of $\Omega$ taking $1$ to $c$, $\{2,3\}$ to $V$,
and $\{4,5\}$ to $U$, and apply it to the table. The resulting
colouring is strong and has the required signatures.
\end{proof}

The reversal of $U,V$ in the boundary prescription is what permits
iteration. A diamond ends with exposed pair $U$, while the next one
begins with $V$, so the same colouring can be repeated throughout the
chain. No alternation between different boundary states is needed.

\begin{lemma}\label{cf:diamond-chains}
Let $K$ be a loopless cubic multigraph, let $\varphi$ be a strong
edge-colouring of $T(K)$ with colours in $\Omega$, and let
$F\subseteq E(K)$. For each $e\in F$, choose a positive integer
$k_e$, and form $K'$ by replacing $e$ with a chain of $k_e$ diamonds.
Then $T(K')$ admits a strong edge-colouring with colours in $\Omega$
that agrees with $\varphi$ on every retained canonical triangle and
on every external edge corresponding to an edge of $E(K)\setminus F$.
\end{lemma}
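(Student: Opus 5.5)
The plan is to treat each edge $e=xy\in F$ independently, since the replacements for distinct edges of $F$ touch disjoint parts of $T(K')$ apart from the retained ports. Fix $e\in F$ and read off from $\varphi$ its external colour $c=c_x(e)=c_y(e)$ and its exposed pairs $U=P_x(e)$, $V=P_y(e)$. By Lemma~\ref{pre:interface} these satisfy $U\cap V=\varnothing$ and $c\notin U\cup V$. Write the chain replacing $e$ as $D_1,\dots,D_{k}$ with $k=k_e$, where $D_j$ has tips $a_j,b_j$. In $K'$ the edges of the chain are $xa_1$, $b_ja_{j+1}$ for $1\le j<k$, and $b_ky$.

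Next, colour every diamond $D_j$ by Lemma~\ref{cf:diamond-boundary} with the \emph{same} data $(c,U,V)$. This gives each $D_j$ signature $(c,V)$ at $a_j$ and $(c,U)$ at $b_j$. Assign colour $c$ to every new external edge of the chain. Keep the local states of $\varphi$ at every vertex of $K$, and note that the signatures at $x$ and $y$ remain $(c,U)$ and $(c,V)$, attached now to the new incidences $xa_1$ and $b_ky$.

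The verification then runs through the interface criterion. Every vertex of $K'$ carries a $6$-colour local state, from $\varphi$ or from the diamond colourings. The interfaces inside a diamond are valid by Lemma~\ref{cf:diamond-boundary}, and the edges of $K$ outside $F$ keep their original states at both ends. The new edges are checked as follows:
\begin{itemize}
\item On $xa_1$ the signatures are $(c,U)$ and $(c,V)$, which are compatible.
\item On $b_ja_{j+1}$ they are $(c,U)$ and $(c,V)$, again compatible. This is exactly the reversal remarked after Lemma~\ref{cf:diamond-boundary}.
\item On $b_ky$ they are $(c,U)$ and $(c,V)$, also compatible.
\end{itemize}
Lemma~\ref{pre:interface} applied to the loopless cubic multigraph $K'$ then yields a strong colouring. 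By construction it agrees with $\varphi$ on retained triangles and on the external edges outside $F$.

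The only point needing care, rather than a real obstacle, is that Lemma~\ref{pre:interface} is applied globally to $K'$ rather than by gluing poles. One must also confirm that equal colours $c$ on consecutive chain edges cause no conflict. This holds because such edges have ports in distinct canonical triangles not joined by an edge: the two outside edges of a diamond meet $\Delta_{a_j}$ and $\Delta_{b_j}$, which are at distance at least two. This is covered automatically by the local-state criterion, since Lemma~\ref{pre:interface} already accounts for all strong conflicts. Beyond that, the argument is a direct iteration.
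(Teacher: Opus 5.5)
Your proposal is correct and matches the paper's proof: you reuse the same data $(c,U,V)$ from Lemma~\ref{cf:diamond-boundary} on every diamond of the chain and colour every joining edge $c$. As a result, each join compares $U$ with $V$ and the retained local states at $x$ and $y$ are unchanged, so Lemma~\ref{pre:interface} applied to $K'$ finishes the argument, simultaneously for all edges of $F$.
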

\begin{proof}
For an edge $e=xy$ to be replaced, let $(c,U)$ and $(c,V)$ be its
signatures at $x$ and $y$ in $\varphi$. The interface criterion gives
$U\cap V=\varnothing$, with $U,V$ $2$-element subsets of
$\Omega\setminus\{c\}$.

Orient the replacing chain from $x$ to $y$. In each diamond, name its
entering tip $a$ and its leaving tip $b$, and apply
Lemma~\ref{cf:diamond-boundary} with the same data $c,U,V$. Every
diamond then has entering signature $(c,V)$ and leaving signature
$(c,U)$. All joining edges receive colour $c$.

At the first join the exposed pairs are $U$ at $x$ and $V$ at the
first diamond. Between consecutive diamonds the exposed pairs are
$U$ and $V$. At the last join they are $U$ at the final diamond and
$V$ at $y$. Thus all new interfaces are compatible. At either old
endpoint, its external edge still has colour $c$, so its local state
is unchanged.

The construction applies simultaneously to all replaced edges, even
when they share endpoints: preserving each old external colour keeps
every retained local state unchanged. Lemma~\ref{pre:interface} then
gives a strong colouring of $T(K')$. No condition on the length or
parity of any chain is used.
\end{proof}

Figure~\ref{fig:diamond-construction} shows the local colouring and its
repetition along a chain. Chains of length zero correspond to
unreplaced edges.

\begin{figure}[!htbp]
\centering
\begin{minipage}[t]{.51\textwidth}
\centering
\begin{minipage}[c][4.8cm][c]{\linewidth}
\centering
\begin{tikzpicture}[x=.8cm,y=.8cm,
  line width=.65pt,
  vertex/.style={circle,fill=black,inner sep=1.45pt},
  auxiliary/.style={circle,draw=black,fill=white,inner sep=1.45pt},
  colour/.style={font=\small,text=red!70!black,fill=white,inner sep=1.5pt},
  every node/.style={font=\small}]
  \coordinate (ao) at (-3.55,0);
  \coordinate (au) at (-2.5,.45);
  \coordinate (av) at (-2.5,-.45);
  \coordinate (bo) at (3.55,0);
  \coordinate (bu) at (2.5,.45);
  \coordinate (bv) at (2.5,-.45);
  \coordinate (ua) at (-.55,1.8);
  \coordinate (ub) at (.55,1.8);
  \coordinate (uv) at (0,1.2);
  \coordinate (va) at (-.55,-1.8);
  \coordinate (vb) at (.55,-1.8);
  \coordinate (vu) at (0,-1.2);
  \draw (au)--(av) node[midway,right=3pt,colour] {$6$};
  \draw (ao)--(av) node[midway,below=2pt,colour] {$2$};
  \draw (ao)--(au) node[midway,above=2pt,colour] {$3$};
  \draw (bu)--(bv) node[midway,left=3pt,colour] {$6$};
  \draw (bo)--(bv) node[midway,below=2pt,colour] {$4$};
  \draw (bo)--(bu) node[midway,above=2pt,colour] {$5$};
  \draw (ub)--(uv) node[midway,right=2pt,colour] {$3$};
  \draw (ua)--(uv) node[midway,left=2pt,colour] {$5$};
  \draw (ua)--(ub) node[midway,above=2pt,colour] {$1$};
  \draw (vb)--(vu) node[midway,right=2pt,colour] {$2$};
  \draw (va)--(vu) node[midway,left=2pt,colour] {$4$};
  \draw (va)--(vb) node[midway,below=2pt,colour] {$1$};
  \draw (au)--(ua) node[midway,above left,colour] {$4$};
  \draw (av)--(va) node[midway,below left,colour] {$5$};
  \draw (bu)--(ub) node[midway,above right,colour] {$2$};
  \draw (bv)--(vb) node[midway,below right,colour] {$3$};
  \draw (uv)--(vu) node[midway,right=3pt,colour] {$6$};
  \draw (-4.45,0)--(ao) node[midway,above=2pt,colour] {$1$};
  \draw (bo)--(4.45,0) node[midway,above=2pt,colour] {$1$};
  \foreach \p in {ao,au,av,bo,bu,bv,ua,ub,uv,va,vb,vu}
    \node[vertex] at (\p) {};
  \node[auxiliary] at (-4.45,0) {};
  \node[auxiliary] at (4.45,0) {};
  \node at (-2.99,0) {$\Delta_a$};
  \node at (2.99,0) {$\Delta_b$};
  \node at (0,2.65) {$\Delta_u$};
  \node at (0,-2.65) {$\Delta_v$};
  \node[anchor=north] at (-3.7,-.65) {$(1,\{2,3\})$};
  \node[anchor=north] at (3.7,-.65) {$(1,\{4,5\})$};
\end{tikzpicture}
\end{minipage}
\par\smallskip
{\footnotesize (a) A $2$-port diamond colouring\par}
\end{minipage}\hfill
\begin{minipage}[t]{.47\textwidth}
\centering
\begin{minipage}[c][4.8cm][c]{\linewidth}
\centering
\begin{tikzpicture}[x=1cm,y=1cm,
  line width=.65pt,
  block/.style={draw,rounded corners=2pt,minimum height=.7cm,
    minimum width=1.05cm,fill=white},
  endblock/.style={draw,rounded corners=2pt,minimum height=.7cm,
    minimum width=1.05cm,inner sep=2pt,fill=white},
  every node/.style={font=\footnotesize}]
  \node[endblock,align=center] (L) at (0,0) {retained\\piece};
  \node[block] (D1) at (1.7,0) {$D_1$};
  \node[block] (Dk) at (4.1,0) {$D_k$};
  \node[endblock,align=center] (R) at (5.8,0) {retained\\piece};
  \draw (L.east)--(D1.west) node[midway,above=4pt] {$c$};
  \draw (D1.east)--(2.5,0) node[midway,above=4pt] {$c$};
  \node at (2.9,0) {$\cdots$};
  \draw (3.3,0)--(Dk.west) node[midway,above=4pt] {$c$};
  \draw (Dk.east)--(R.west) node[midway,above=4pt] {$c$};
  \node[below=7pt,xshift=.10cm] at (L.east) {$U$};
  \node[below=7pt,xshift=-.10cm] at (D1.west) {$V$};
  \node[below=7pt,xshift=.10cm] at (D1.east) {$U$};
  \node[below=7pt,xshift=-.10cm] at (Dk.west) {$V$};
  \node[below=7pt,xshift=.10cm] at (Dk.east) {$U$};
  \node[below=7pt,xshift=-.10cm] at (R.west) {$V$};
  \node at (2.9,-.92)
    {Every $D_i$: entry $(c,V)$, exit $(c,U)$};
\end{tikzpicture}
\end{minipage}
\par\smallskip
{\footnotesize (b) The same boundary state on every diamond\par}
\end{minipage}
\caption{(a) The $2$-port diamond colouring of Table~\ref{cf:diamond-table}.
Red numbers denote edge colours; open circles are auxiliary degree-$1$
endpoints. (b) Already coloured pieces of $T(G_0)$ are joined through
the restored diamond pieces. Each diamond has entry signature $(c,V)$ and exit signature $(c,U)$, so every join
compares the disjoint pairs $U,V$, regardless of the chain length.
For $k=1$ there is one diamond; for $k=0$ the end blocks are joined directly.}
\label{fig:diamond-construction}
\end{figure}

\FloatBarrier
\Needspace{9\baselineskip}
The reduction to triangle blocks leaves two cases: the original
base $G=K_4$ and the graphs whose blocks are all diamonds. Both can
be closed using the same boundary data as above.
In the all-diamond case we close a chain into a cycle; for $K_4$ we
join the two tips of a single diamond.

\begin{lemma}
\label{cf:finite-initial}
Let $G$ be a connected simple bridgeless cubic claw-free graph.
Suppose that either $G=K_4$, or $G\ne K_4$ and every block in the
partition of Lemma~\ref{cf:blocks} is a diamond. Then $T(G)$ admits
a strong edge-colouring with colours in $\Omega$.
\end{lemma}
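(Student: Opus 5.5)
The plan is to reuse the $2$-port diamond colouring of Lemma~\ref{cf:diamond-boundary} and close it up, in both cases, by a single edge. The point is that a diamond coloured with data $(c,U,V)$ has entering signature $(c,V)$ and leaving signature $(c,U)$, and these two pairs are disjoint.

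\emph{All-diamond case.} By Lemma~\ref{cf:skeleton}(i), $J$ is a cycle $D_1,\dots,D_k$ of diamonds with $k\ge 2$; the case $k=1$ would require a loop, and $G$ is simple. Orient the cycle, fix $c=1$, $U=\{4,5\}$, $V=\{2,3\}$, and colour every $D_i$ by Lemma~\ref{cf:diamond-boundary} with these data. Each joining edge from the leaving tip of $D_i$ to the entering tip of $D_{i+1}$ then gets colour $c$ at both ends. Its exposed pairs are $U$ and $V$, which are disjoint. Lemma~\ref{pre:interface} gives the colouring, exactly as in the proof of Lemma~\ref{cf:diamond-chains}, and no parity condition on $k$ is needed. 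The case $k=2$, where $J$ has two parallel edges, is no different: the two joining edges are distinct edges of $G$ and are checked separately.

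\emph{The case $G=K_4$.} Since $K_4$ is a diamond with tips $a,b$ together with the edge $ab$, take the coloured $2$-port truncation from Table~\ref{cf:diamond-table} and identify the two pendant edges into one external edge joining the outside ports of $\Delta_a$ and $\Delta_b$. Both pendant edges have colour $1$. The exposed pairs are $\{2,3\}$ at $a$ and $\{4,5\}$ at $b$, which are disjoint. So the new interface satisfies~\eqref{pre:exposed}, and Lemma~\ref{pre:interface} applied to the joined pole gives a strong $6$-edge-colouring of $T(K_4)$.

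The only point needing care, and hence the main obstacle, is checking that joining ports of the same pole introduces no conflicts beyond those covered by Lemma~\ref{pre:interface}. In $K_4$ the new edge $ab$ joins two tip triangles that are otherwise at distance two through the centres. Still, every strong conflict in $T(K_4)$ either lies within a local state or occurs across a single external edge, so Lemma~\ref{pre:interface} covers it. A quick direct check of the six conflicts across $ab$ confirms this: the colours are $2,3$ versus $4,5$, all distinct from each other and from $1$.
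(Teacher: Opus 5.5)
Your proof is correct and follows the paper's argument essentially verbatim. In both cases every diamond is coloured by Table~\ref{cf:diamond-table} with no permutation (your data $c=1$, $U=\{4,5\}$, $V=\{2,3\}$ is exactly that table), each closing edge gets colour $1$ with disjoint exposed pairs $\{2,3\}$ and $\{4,5\}$, and Lemma~\ref{pre:interface} finishes. One small correction: $k=1$ is excluded not by the simplicity of $G$, since an edge joining the two tips would be an ordinary simple edge, but because the tips of an induced diamond are nonadjacent (equivalently $G\ne K_4$); in any case your construction also works for $k=1$, which is precisely the $K_4$ closure.
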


\begin{proof}
In the all-diamond case the contracted multigraph $J$ is a cycle by
Lemma~\ref{cf:skeleton}. Orient this cycle; for a length-$2$ cycle,
its two parallel edges have opposite directions. In each diamond,
call the entering tip $a$ and the leaving tip $b$. Colour every diamond
by Table~\ref{cf:diamond-table}, without changing the colours. The
entering signature is $(1,\{2,3\})$ and the leaving signature is
$(1,\{4,5\})$. At every join the external colour is $1$ and the exposed
pairs are disjoint, including at the last join closing the cycle.
The interface criterion therefore gives a strong $6$-edge-colouring.

For $K_4$, remove the edge $ab$ to view its four vertices as the diamond
with tips $a,b$ and centres $u,v$. Use the same table and join the two
dangling ports by the external edge corresponding to $ab$, with colour
$1$. Their exposed pairs $\{2,3\}$ and $\{4,5\}$ are disjoint. This
checks the sixth base edge in addition to the five already checked in
Lemma~\ref{cf:diamond-boundary}, so the resulting colouring of $T(K_4)$
is strong. Here $K_4$ is represented by a diamond together with its missing tip
edge; it is not an induced-diamond block.
\end{proof}

\begin{proof}[Proof of Theorem~\ref{cf:main}]
We combine the structural reduction with the two colouring constructions.
If $G=K_4$, use the closed-diamond colouring in
Lemma~\ref{cf:finite-initial}. Suppose therefore that $G\ne K_4$ and
form its block partition by Lemma~\ref{cf:blocks}. If all blocks are
diamonds, the cycle construction of Lemma~\ref{cf:finite-initial}
colours $T(G)$.

Otherwise, replace every diamond chain by an edge to obtain the
reduced base $G_0$. Keep the attachment vertices and the length of each
replaced chain. By Lemma~\ref{cf:reduced-base}, we have $G_0\cong T(Q)$ for a connected
loopless cubic multigraph $Q$.

Colour $T(G_0)$ by Lemma~\ref{cf:reduced-colouring}.
Recover $G$ from $G_0$ by restoring its
recorded diamond chains. Lemma~\ref{cf:diamond-chains} extends the
colouring to $T(G)$ and preserves every canonical triangle retained
from $T(G_0)$. Chains of length zero require no replacement. This proves
$\sidx(T(G))\leq6$. The reverse inequality follows from
\eqref{pre:lowerbound}.
\end{proof}

Lemma~\ref{cf:reduced-colouring} applies whenever the base has a
partition into triangles. Taking the base to be $T(Q)$ therefore gives
the following corollary, including when $Q$ has bridges.

\begin{corollary}\label{dt:multigraph-extension}
Let $Q$ be a loopless cubic multigraph. Then
\[
 \sidx(T(T(Q)))=6.
\]
\end{corollary}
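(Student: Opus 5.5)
The corollary should follow directly from Lemma~\ref{cf:reduced-colouring}, applied to $G_0=T(Q)$. The plan is to check that $T(Q)$ satisfies that lemma's hypotheses: it is a simple cubic graph whose vertex set is partitioned into triangles.

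First, since $Q$ is a loopless cubic multigraph, the introduction already records that $T(Q)$ is a simple cubic graph, even when $Q$ has parallel edges. Parallel edges of $Q$ receive distinct ports, so they become disjoint external edges of $T(Q)$.

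Second, the canonical triangles $\Delta_v$, $v\in V(Q)$, partition $V(T(Q))$. Each vertex $(v,e)$ lies in exactly one of them, namely $\Delta_v$. Contracting these triangles recovers $Q$, as in the opening paragraph of Section~\ref{dt:section}. No connectivity or bridgelessness assumption is needed, because Lemma~\ref{cf:reduced-colouring} explicitly imposes none; it treats each component of $Q$ separately.

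Lemma~\ref{cf:reduced-colouring} then gives $\sidx(T(T(Q)))=6$. Underneath, this is Brooks' theorem combined with Proposition~\ref{dt:three-vertex-colouring} for non-$K_4$ components, and Proposition~\ref{dt:k4-certificate} for $K_4$ components. The lower bound~\eqref{pre:lowerbound} applied to the base $T(Q)$ is already included in that lemma.

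There is essentially no obstacle. The only point needing care is that the triangle partition used to form the skeleton is the canonical one, so the resulting $Q$ is the given multigraph, including its multiplicities and bridges. Even if some other partition were used, the lemma would still apply.
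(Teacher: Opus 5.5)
Your proposal is correct and follows the same route as the paper: you apply Lemma~\ref{cf:reduced-colouring} to the simple cubic graph $T(Q)$, whose canonical triangles partition its vertex set, so no connectivity or bridgelessness hypothesis on $Q$ is needed. The paper also remarks that when $Q$ is bridgeless the result follows from Theorem~\ref{cf:main} applied to each component, but this remark is not needed for the argument.
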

\begin{proof}
The graph $G=T(Q)$ is simple and cubic, and its canonical
triangles partition its vertex set. Lemma~\ref{cf:reduced-colouring}
therefore gives $\sidx(T(G))=\sidx(T(T(Q)))=6$, regardless of
whether $Q$ has bridges.

When $Q$ is bridgeless, $G$ is also bridgeless and claw-free.
In that case the conclusion follows directly from
Theorem~\ref{cf:main}, applied to each connected component of $G$.
\end{proof}

\FloatBarrier
\section{Multigraph extensions}\label{ext:section}

We now consider bases that may have parallel edges.
Corollary~\ref{bip:multigraph-nondet} already settles the non-det-extremal
bipartite case. We first study the boundary behaviour of a block formed
by two parallel edges, use it to colour cyclic arrangements of such
blocks and diamonds, and deduce a general chain insertion result.
We then extend the rooted replacement construction to obtain further
bipartite examples. Section~\ref{concl:section} explains which cases
these constructions settle and what remains open. Throughout this
section, all multigraphs are loopless, $\Omega=\{1,2,3,4,5,6\}$,
and parallel edges and their incidences are distinguished.

\subsection{Parallel-edge blocks and cyclic constructions}

Let $B$ be the $2$-pole consisting of two vertices $u,v$ joined by
two parallel edges $p,q$, with one dangling edge at each vertex.
Its boundary behaviour differs from that of a diamond: the two
boundary edges must have different colours. The following proposition
determines all boundary prescriptions realised by its augmented
truncation.

\begin{proposition}\label{ext:double-rule}
The pairs of boundary signatures of strong edge-colourings of the
augmented truncation of $B$ with colours in $\Omega$ are exactly
\begin{equation}\label{concl:double-rule}
 (c,W),\quad(d,W),\qquad
 c\ne d,\quad |W|=2,\quad W\cap\{c,d\}=\varnothing,
\end{equation}
where $c,d\in\Omega$ and $W\subseteq\Omega$.
\end{proposition}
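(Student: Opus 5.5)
The plan is to work entirely with local states via Lemma~\ref{pre:interface}, applied to the augmented truncation of $B$. Write the state at $u$ in port order $(\mathrm{out},p,q)$ as $(c,\alpha,\beta;h_u(\mathrm{out}),h_u(p),h_u(q))$. Because $p,q$ are distinct edges at $u$, their colours $\alpha,\beta$ differ. The state at $v$ must use the same colours $\alpha,\beta$ on $p,q$, together with some boundary colour $d$. Write $I_u=\Omega\setminus\{c,\alpha,\beta\}$ and $I_v=\Omega\setminus\{d,\alpha,\beta\}$.

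For \textbf{necessity}, condition~\eqref{pre:hidden} on both $p$ and $q$ gives
\[
I_u\cap I_v\subseteq\{h_u(p),h_v(p)\}\cap\{h_u(q),h_v(q)\}.
\]
Since $h_u$ is injective, $h_u(p)\ne h_u(q)$, and likewise $h_v(p)\ne h_v(q)$.
\begin{itemize}
\item If $c=d$, then $I_u=I_v$ has three elements. This is impossible, because the right-hand side has at most two elements.
\item So $c\ne d$. Then $I_u\cap I_v=\Omega\setminus\{c,d,\alpha,\beta\}=:Z$ has exactly two elements.
\end{itemize}
The two-element set $Z$ must lie in both $\{h_u(p),h_v(p)\}$ and $\{h_u(q),h_v(q)\}$. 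This forces
\[
\{h_u(p),h_v(p)\}=\{h_u(q),h_v(q)\}=Z.
\]
Injectivity then rules out $h_u(p)=h_u(q)$, so $h_u(p)=h_v(q)$ and $h_u(q)=h_v(p)$. Hence $\{h_u(p),h_u(q)\}=Z$, so $h_u(\mathrm{out})=d$, the unique element of $I_u\setminus Z$. By symmetry $h_v(\mathrm{out})=c$.

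The exposed pairs at the boundary ports are then
\[
P_u(\mathrm{out})=I_u\setminus\{d\}=Z,\qquad P_v(\mathrm{out})=I_v\setminus\{c\}=Z.
\]
Setting $W=Z$, we get $|W|=2$ and $W\cap\{c,d\}=\varnothing$, which is exactly~\eqref{concl:double-rule}.

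For \textbf{sufficiency}, take $c\ne d$ and $W$ as in~\eqref{concl:double-rule}. Let $\{\alpha,\beta\}=\Omega\setminus(\{c,d\}\cup W)$ and $W=\{w_1,w_2\}$. Define the states
\[
(c,\alpha,\beta;\,d,w_1,w_2)\ \text{at }u,\qquad (d,\alpha,\beta;\,c,w_2,w_1)\ \text{at }v.
\]
Each state uses all six colours. On $p$ and $q$ we have $I_u\cap I_v=W\subseteq\{w_1,w_2\}$, so~\eqref{pre:hidden} holds on both ordinary edges. The dangling edges need only the local six-colour condition, by the pole clause of Lemma~\ref{pre:interface}. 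The boundary signatures are $(c,W)$ and $(d,W)$, as required.

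The main obstacle is the necessity step. One has to rule out $c=d$ and then pin down the $h$-values. Both follow from the counting argument above: the three-element set $I_u\cap I_v$ cannot fit in two intersecting two-element sets, and when $I_u\cap I_v$ has two elements it must equal both of them.
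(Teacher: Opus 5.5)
Your proof is correct and matches the paper's argument step for step. Sufficiency uses the same explicit states $(c,\alpha,\beta;d,w_1,w_2)$ and $(d,\alpha,\beta;c,w_2,w_1)$. Necessity uses the same reasoning: equal boundary colours make the internal palettes coincide, and otherwise the two common colours must lie opposite both parallel edges at each end, which forces $d$ and $c$ opposite the boundary ports. The only difference, a cosmetic one, is that you run the counting through condition~\eqref{pre:hidden} rather than through the exposed pairs directly.
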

\begin{proof}
For sufficiency, write $W=\{r,s\}$ and
$\Omega=\{c,d,a,b,r,s\}$. In the port order
$(\mathrm{out},p,q)$, use the states
\[
 u:(c,a,b;d,r,s),\qquad v:(d,a,b;c,s,r).
\]
The exposed pairs at the boundary ports both equal $W$, and the
pairs at each parallel-edge interface are disjoint. Thus
Lemma~\ref{pre:interface} verifies the colouring.

For necessity, let $a,b$ be the colours on the two parallel edges.
Equal boundary colours would give identical internal palettes at
$u,v$, preventing disjoint exposed pairs on either parallel edge.
For distinct boundary colours $c,d$, the four colours $a,b,c,d$ are
distinct. Writing the remaining colours as $r,s$, the internal
palettes are $\{d,r,s\}$ and $\{c,r,s\}$. At each parallel-edge
interface, the two opposite internal colours must cover the common
colours $r,s$. Since the opposite internal colours at each vertex
are distinct, these two interfaces use $r,s$ at each vertex, leaving
$d,c$ opposite the boundary ports. Thus both boundary exposed pairs
equal $\{r,s\}$, proving~\eqref{concl:double-rule}.
\end{proof}

We next combine these blocks with diamonds in cyclic arrangements.
The only exception is the closure of a single block $B$, which gives
the $3$-bond $M_0$.

\begin{proposition}\label{ext:block-cycles}
Let $G$ be obtained by joining a non-empty cyclic sequence of disjoint
$2$-poles, each of which is a copy of $B$ or a diamond with one
dangling edge at each tip. If $G\not\cong M_0$, then $\sidx(T(G))=6$.
\end{proposition}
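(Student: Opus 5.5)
We would prove the statement by choosing boundary signatures around the cycle, realising each block through Proposition~\ref{ext:double-rule} or Lemma~\ref{cf:diamond-boundary}, and gluing with Lemma~\ref{pre:interface}. Orient the cyclic sequence and number the joining edges $e_0,e_1,\dots,e_{n-1}$, where $e_i$ joins the exit tip of the $i$-th pole to the entry tip of the $(i+1)$-st, indices modulo $n$. Each joining edge $e_i$ receives a colour $c_i$. At its two ends it carries exposed pairs $L_i$, on the side of pole $i$, and $R_i$, on the side of pole $i+1$, and compatibility requires $L_i\cap R_i=\varnothing$ with $c_i\notin L_i\cup R_i$.

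The blocks impose the following conditions. A diamond at position $i$ needs $c_{i-1}=c_i$. By Lemma~\ref{cf:diamond-boundary}, after a colour permutation it also needs $R_{i-1}$ and $L_i$ to be disjoint $2$-subsets avoiding that colour. A copy of $B$ needs $c_{i-1}\ne c_i$ and, by Proposition~\ref{ext:double-rule}, $R_{i-1}=L_i=W_i$ with $W_i\cap\{c_{i-1},c_i\}=\varnothing$. Thus a valid prescription is a cyclic sequence of colours and pairs meeting these local rules, and the rules are stated entirely at the interfaces.

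For the construction, let $m$ be the number of copies of $B$. If $m=0$, the colouring of Lemma~\ref{cf:finite-initial} applies verbatim. Indeed, $c_i=1$ for all $i$, every diamond has entry pair $\{2,3\}$ and exit pair $\{4,5\}$, and every join compares $\{4,5\}$ with $\{2,3\}$.

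If $m\ge1$, diamonds only propagate colours, so the colour sequence is constant between consecutive copies of $B$. It changes at each $B$, which requires the colours on the $m$ arcs to form a proper colouring of an $m$-cycle. We use two colours $1,2$ alternating when $m$ is even. When $m$ is odd, including $m=1$, we use a third colour on one arc, so three colours $1,2,3$ suffice.

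We then choose the pairs. Fix $W=\{5,6\}$ as the pair exposed at both ports of every $B$. This is admissible, since $W$ avoids $\{1,2,3\}$. Whenever a $B$ is adjacent to another $B$, or to a diamond, the partner side must expose a pair disjoint from $\{5,6\}$ and from the edge colour.

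Diamond runs between two $B$'s are handled as in Lemma~\ref{cf:diamond-chains}. A diamond entering next to a $B$ must have entry pair disjoint from $\{5,6\}$, and its exit pair must be disjoint from the next entry. We therefore alternate: diamonds get entry $V$ and exit $U$, with $U\cap V=\varnothing$, $V\cap W=\varnothing$ at the first join and $U\cap W=\varnothing$ at the last. On an arc with colour $c\in\{1,2,3\}$ only four colours avoid $c$, so $V$ and $U$ must both avoid $W\cup\{c\}$. That leaves only three colours, too few for two disjoint $2$-sets.

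The real difficulty is therefore choosing the $W_i$ so that adjacent interfaces stay satisfiable; a single fixed $W$ does not work. The fix is to give each $B$ its own pair. For a run of diamonds on an arc of colour $c$ between copies of $B$ with pairs $W$ and $W'$, it suffices that $\Omega\setminus\{c\}$ contains disjoint $2$-sets $V$ and $U$ with $V\cap W=\varnothing$ and $U\cap W'=\varnothing$. Taking $V$ and $U$ both disjoint from their respective neighbours, it is easiest to choose $W'=V$ and $W=U$. The run then behaves like a diamond chain of Lemma~\ref{cf:diamond-chains} inserted into an edge with signatures $(c,W)$ and $(c,W')$, $W\cap W'=\varnothing$. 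So the prescription reduces to the case with no diamonds: a cyclic sequence of copies of $B$ in which consecutive $B$'s joined by an arc of colour $c$ have disjoint pairs avoiding $c$ and avoiding both incident colours, a direct join corresponding to an empty chain. Diamond chains are then inserted by Lemma~\ref{cf:diamond-chains}.

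The main obstacle is this last combinatorial choice. Each $B_i$ needs $W_i$ avoiding its two incident arc colours, and consecutive $W_i,W_{i+1}$ must be disjoint and both avoid the shared arc colour.

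For $m\ge2$ even with colours alternating $1,2$, we alternate $W_i$ between $\{3,4\}$ and $\{5,6\}$.

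For $m$ odd and at least $3$, the arcs use $1,2,\dots,1,2,3$. The $B$'s adjacent to colour $3$ must avoid $3$, and we assign pairs among $\{4,5\}$, $\{6,3\}$ and $\{4,6\}$ by a short case check, using $\{3,x\}$ only where $3$ is not incident. For example, the cycle $1,2,3$ with $m=3$ has $B$'s incident to $\{1,2\}$, $\{2,3\}$ and $\{3,1\}$. We can take $W=\{3,4\}$, $\{5,6\}$ and $\{2,4\}$, and then check that consecutive pairs are disjoint and avoid the shared arc colour.

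For $m=1$ there is one $B$ and one arc, so the arc would need two different colours at its two ends. Hence the single arc must contain at least one diamond, and the case $m=1$ without diamonds is exactly $M_0$, which the hypothesis excludes. A $B$ joined to itself through a chain needs $c\ne d$, but diamonds force equal colours. We therefore expect the statement to require checking this; if it fails, a single $B$ closed through diamonds would need separate treatment. That is where the argument is most delicate, and we would verify it first by attempting an explicit state assignment and checking it with Lemma~\ref{pre:interface}.
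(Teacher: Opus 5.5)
There is a genuine gap: your argument does not cover the case of exactly one copy of $B$ together with at least one diamond, which you explicitly leave open. The rest of your plan is sound and agrees with the paper: the case $m=0$, the even case, and the handling of diamond runs between two copies of $B$ (taking $U=W$ and $V=W'$, which is Lemma~\ref{cf:diamond-chains} applied to a direct join).

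The gap comes from your assertion that ``a diamond at position $i$ needs $c_{i-1}=c_i$''. Lemma~\ref{cf:diamond-boundary} only \emph{supplies} colourings with equal boundary colours. It says nothing about all colourings, and the assertion is in fact false. If it were true, a single $B$ closed through diamonds would admit no $6$-colouring at all, because Proposition~\ref{ext:double-rule} forces the boundary colours of $B$ to differ. The statement would then fail.

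The missing idea is a diamond colouring with \emph{distinct} boundary colours. The paper gives an explicit certificate on the $6$-vertex base formed by:
\begin{itemize}
\item a diamond with tips $a,b$ and centres $u,v$;
\item the parallel edges $p,q$ joining $x,y$;
\item the edges $ax$ and $by$.
\end{itemize}
In this certificate the diamond has boundary signatures $(3,\{4,5\})$ at $a$ and $(4,\{3,5\})$ at $b$. The block $B$ has $(3,\{1,2\})$ and $(4,\{1,2\})$, as Proposition~\ref{ext:double-rule} requires. Any further diamonds are then inserted into a joining edge by Lemma~\ref{cf:diamond-chains}. Without a construction of this kind, your proposal does not prove the statement in this case.

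There is also an error in the odd case $m\ge 3$. Take your example with arc colours $1,2,3$. The blocks incident with $\{3,1\}$ and with $\{1,2\}$ are consecutive, since they share the arc of colour $1$. Yet you give them $\{2,4\}$ and $\{3,4\}$, which meet in $4$.

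For $m=3$ the three blocks are pairwise consecutive, so their pairs must partition $\Omega$. Moreover, $3$ must lie in the pair of the $\{1,2\}$-block, $1$ in that of the $\{2,3\}$-block, and $2$ in that of the $\{3,1\}$-block. No choice from your candidates $\{4,5\},\{3,6\},\{4,6\}$ can achieve this. A correct choice is $\{3,4\},\{1,5\},\{2,6\}$.

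For general odd $m$, alternate $\{3,4\},\{5,6\}$ on the $m-2$ blocks between arcs of colours $1,2$, beginning and ending with $\{3,4\}$. Then use $\{1,5\}$ for the $\{2,3\}$-block and $\{2,6\}$ for the $\{3,1\}$-block. The paper avoids this case check by fixing the pairs $\{1,2\},\{3,4\},\{5,6\}$ and choosing the join colours instead: $5,6$ alternately, then $1$ and $3$.
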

\begin{proof}
First suppose that the cycle contains $k\ge2$ blocks $B$. Replace
any intervening diamond chains by single edges, leaving a cycle of
these $k$ blocks. Partition the palette into $U=\{1,2\}$,
$V=\{3,4\}$ and $Z=\{5,6\}$. For even $k$,
alternate the exposed pairs $U,V$ on the blocks and the colours
$5,6$ on the joining edges. For odd $k$, alternate $U,V$ on the
first $k-1$ blocks and use $Z$ on the last. Alternate $5,6$ on the
first $k-2$ joins, and use $1$ on the join from the final $V$-block
to the $Z$-block and $3$ on the join from the $Z$-block to the first
$U$-block. In both cases~\eqref{concl:double-rule} applies at every
block, and the exposed pairs at every join are disjoint. Any diamonds
can then be reinserted by Lemma~\ref{cf:diamond-chains}.

If the cycle contains exactly one block $B$ and at least one
diamond, replacing its diamond chain by a single edge would give
$M_0$, so the preceding construction does not apply. Instead, start from the following
$6$-vertex base. Let the diamond have tips $a,b$
and centres $u,v$, let $p,q$ be the parallel edges $xy$, and add
$ax,by$. A $6$-colour certificate is
\[
\begin{array}{c|c|c|c}
\text{vertex}&\text{port order}&\text{external colours}&
 \text{opposite internal colours}\\ \hline
a&(u,v,x)&(1,2,3)&(4,5,6)\\
b&(u,v,y)&(2,6,4)&(5,3,1)\\
u&(a,b,v)&(1,2,5)&(6,3,4)\\
v&(a,b,u)&(2,6,5)&(4,1,3)\\
x&(p,q,a)&(6,5,3)&(2,1,4)\\
y&(p,q,b)&(6,5,4)&(1,2,3).
\end{array}
\]
Each row uses all six colours. Across each base edge, the external
colours agree and the two exposed pairs are disjoint. Thus
Lemma~\ref{pre:interface} verifies the certificate, and
Lemma~\ref{cf:diamond-chains} inserts any further diamonds.
Cycles consisting only of diamonds, including the closure
giving $K_4$, were treated in Lemma~\ref{cf:finite-initial}.
The only remaining arrangement, one block $B$ with its two ports
joined, is the excluded graph $M_0$. In every other case the local
lower bound~\eqref{pre:lowerbound} gives equality.
\end{proof}

Cutting a joining edge in a coloured cyclic arrangement gives an
insertion construction. The two ends of the cut edge have the same
external colour and disjoint exposed pairs, so the gluing argument
of Lemma~\ref{bip:edge-sum} matches them to any coloured edge of
another base.

\begin{corollary}\label{ext:double-chains}
Let $K$ be a loopless cubic multigraph, let $\varphi$ be a strong
edge-colouring of $T(K)$ with colours in $\Omega$, and let
$e\in E(K)$. Form $K'$ by replacing $e$ with a non-empty chain of
copies of $B$ and diamonds. If the chain does not consist of a single
copy of $B$, then $T(K')$ admits a strong edge-colouring with colours
in $\Omega$ that agrees with $\varphi$ on every retained canonical
triangle and on every external edge corresponding to an edge of
$E(K)\setminus\{e\}$.
\end{corollary}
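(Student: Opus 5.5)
The approach is the one suggested by the remark before the statement: close the chain into a cyclic arrangement, colour it by Proposition~\ref{ext:block-cycles}, cut it open at one joining edge, and glue it into $T(K)$ at $e$. Concretely, let $\mathcal C$ be the given non-empty chain of $2$-poles, and let $G_{\mathcal C}$ be the loopless cubic multigraph obtained by joining its two end dangling edges. Then $G_{\mathcal C}$ is a cyclic arrangement of copies of $B$ and diamonds. Because the chain is not a single copy of $B$, $G_{\mathcal C}\not\cong M_0$. Proposition~\ref{ext:block-cycles} then gives a strong $6$-edge-colouring $\psi$ of $T(G_{\mathcal C})$; when the chain consists only of diamonds, this includes the case handled in Lemma~\ref{cf:finite-initial}. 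Let $g=xy\in E(G_{\mathcal C})$ be the closing edge, so that cutting $g$ recovers the chain with $x,y$ as its end ports.

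The second step is the gluing, done as in Lemma~\ref{bip:edge-sum}. Write $e=uv$ in $K$, and let $(c,U)$ and $(c,V)$ be the signatures of $\varphi$ at $u$ and $v$; Lemma~\ref{pre:interface} gives $U\cap V=\varnothing$ and $c\notin U\cup V$. In $\psi$, the signatures at $x,y$ have the form $(d,X)$ and $(d,Y)$, with $X,Y$ disjoint and avoiding $d$. Choose a permutation of $\Omega$ sending $d\mapsto c$, $X\mapsto V$, $Y\mapsto U$, and the leftover colour to the leftover colour, and apply it to $\psi$. Now delete $e$ and $g$ and add the edges $ux$ and $yv$, both with colour $c$; the orientation of the chain is chosen so that this attachment is the prescribed $K'$. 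On $ux$ the exposed pairs are $U$ and $V$, and on $yv$ they are $U$ and $V$, so both are disjoint. Every local state in $T(K)$ and in the permuted $T(G_{\mathcal C})$ is unchanged, since each external colour at the ports is preserved. Lemma~\ref{pre:interface} then yields a strong colouring of $T(K')$. It agrees with $\varphi$ on every retained triangle and on every external edge other than the one for $e$, because no colour of $\varphi$ was altered.

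The only delicate point is the first step: we must check that the closure of the chain falls under Proposition~\ref{ext:block-cycles} and is not the excluded $M_0$. The closure is $M_0$ exactly when the chain is a single $B$, and that case is excluded by hypothesis. We also need the closing edge to be an ordinary joining edge of the cyclic arrangement, so that its two signatures share an external colour and have disjoint exposed pairs. This holds for every edge in a strong colouring, by Lemma~\ref{pre:interface}. Nothing else depends on the composition of the chain.
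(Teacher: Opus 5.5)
Your proposal is correct and follows essentially the same route as the paper. Like the paper, you close the chain by an edge to get a cyclic arrangement $F\not\cong M_0$, colour $T(F)$ by Proposition~\ref{ext:block-cycles}, and glue $K$ and $F$ along $e$ and the closing edge using the permutation argument of Lemma~\ref{bip:edge-sum}, permuting colours only on the chain side so that $\varphi$ is untouched.
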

\begin{proof}
Join the two free ends of the chain by an edge $f$, obtaining a
loopless cubic multigraph $F$. Since the chain is not a single
copy of $B$, we have $F\not\cong M_0$, and
Proposition~\ref{ext:block-cycles} gives a strong $6$-edge-colouring
of $T(F)$. The graph $K'$ is the edge-sum of $K$ and $F$ along
$e$ and $f$. Apply the gluing construction in the proof of
Lemma~\ref{bip:edge-sum}, permuting colours only on $T(F)$ and
assigning the original external colour of $e$ to both new joining
edges. This preserves all local states on $K$ and all other
external colours, as required.
\end{proof}

The excluded chain cannot be inserted while preserving both endpoint
states: a single block $B$ requires distinct boundary colours by
Proposition~\ref{ext:double-rule}, whereas the replaced edge has
the same colour at both ends. In particular, the corollary covers a
chain containing just one block $B$ as soon as it also contains a
diamond.

\subsection{Rooted replacements and bipartite cycles}

We now return to det-extremal bipartite bases. The rooted construction
depends on matching signs and on the three incidences at each vertex.
Distinct neighbours are not required, so the same construction gives
an exact vertex simulator when parallel edges are present.

\begin{proposition}\label{ext:rooted-simulator}
Let $H$ be a connected loopless cubic bipartite det-extremal
multigraph, and let $v\in V(H)$. Then the $3$-pole $H-v$ is an
exact vertex simulator.
\end{proposition}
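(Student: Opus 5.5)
The plan is to rerun the proof of Theorem~\ref{bip:rooted-simulator} verbatim, checking at each step that simplicity was never used except through statements already extended to multigraphs in the proof of Corollary~\ref{bip:multigraph-nondet}.

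\textbf{Setting up the digraph.} Choose the bipartition with $v\in B$ and a perfect matching $M=\{a_ib_i:i\in I\}$ with $v=b_r$. Such a matching exists because a loopless cubic bipartite multigraph has a proper $3$-edge-colouring, and one colour class contains the edge at $v$ of any chosen type. Form $D_M$ as before, with loops and parallel arcs allowed. By the multigraph version of Lemma~\ref{bip:strong-digraph} noted in Corollary~\ref{bip:multigraph-nondet}, $D_M$ is strongly connected. The sign identity of Lemma~\ref{bip:sign-cycle} holds for alternating cycles of every length, including length two. So det-extremality forces every directed cycle of $D_M$ to have odd length. A loop is a directed cycle of length one, which is odd, so loops are allowed. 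A pair of opposite arcs would be a directed $2$-cycle, corresponding to an alternating $4$-cycle, and is excluded.

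\textbf{Assigning the bits.} Take an outgoing spanning arborescence $\mathcal T$ rooted at $r$, and let $s(a_i)$ be the parity of the depth of $i$. For $j\ne r$, the tree predecessor $i$ of $j$ is distinct from $j$, since arborescence arcs are never loops. The arc $i\to j$ is a nonmatching edge $a_ib_j$. Hence $b_j$ has the two neighbours $a_i$ and $a_j$ with different bits, giving $s(N_H(b))=\{0,1\}$ for all $b\ne v$.

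\textbf{The root boundary.} For an arc $i\to r$, the tree path from $r$ to $i$ followed by this arc is a closed directed walk. Because $\mathcal T$ is a tree, it is a directed cycle; when $i=r$ it is a loop. Its length is odd, so $i$ has even depth. Together with $s(a_r)=0$, this gives $s(a)=0$ at every incidence of $v$, the analogue of~\eqref{bip:root-bits}, where each incidence is counted separately if neighbours coincide.

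\textbf{Colouring and matching the prescription.} Apply \eqref{bip:external-formula}--\eqref{bip:sink-formula} to $H-v$, indexing ports by types. The interface check in Lemma~\ref{bip:colour-formula} is purely local in the three typed incidences. By the remark in Corollary~\ref{bip:multigraph-nondet}, it therefore works unchanged with parallel edges. The dangling signatures are $((i,0),\{(j,1):j\ne i\})$. Relabelling colours by $(i,0)\mapsto c_i$ and $(i,1)\mapsto h_i$, with types named according to the prescribed port order, yields~\eqref{bip:vertex-signatures}.

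\textbf{Expected obstacle.} The main point to check is the loop/$2$-cycle bookkeeping in $D_M$. Loops are odd cycles and harmless, and they never serve as tree arcs. A loop at $r$ is consistent, giving $i=r$ with depth $0$. The paper's assertion that the multigraph $D_M$ is still strongly connected also needs confirming.
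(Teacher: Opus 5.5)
Your proposal is correct and follows essentially the same route as the paper, which likewise reruns the proof of Theorem~\ref{bip:rooted-simulator}, omits loops from the arborescence, observes that a loop at the root closes a tree path of length zero, and lets the three edge types distinguish the ports of $v$ when neighbours coincide. The strong connectivity of $D_M$ that you flag is settled by the degree-balance argument of Lemma~\ref{bip:strong-digraph}, because loops and parallel arcs do not disturb in- and out-degrees; the paper handles it the same way, by citing the proof of Corollary~\ref{bip:multigraph-nondet}.
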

\begin{proof}
Choose partition sets $A,B$ with $v\in B$ and follow the construction
of Theorem~\ref{bip:rooted-simulator}, distinguishing parallel
incidences. As explained in the proof of
Corollary~\ref{bip:multigraph-nondet}, the
digraph $D_M$ is still strongly connected, and det-extremality implies
that every directed cycle is odd. Loops are omitted from the outgoing
spanning arborescence. A loop at its root closes the tree path of
length zero, so the parity argument giving~\eqref{bip:root-bits}
remains valid. The resulting assignment has mixed neighbourhoods at
every vertex of $B\setminus\{v\}$ and gives bit zero to every neighbour
of $v$.
The three edge types distinguish the ports of $v$ even when neighbours
coincide. The colouring formula and the final colour permutation
therefore give every prescribed ordered triple of signatures in
\eqref{bip:vertex-signatures}.
\end{proof}

This simulator allows larger bipartite pieces to take the place of
the blocks $B$ in a cyclic arrangement. For an edge $e$ of a cubic
multigraph $H$, write $H-e$ for the $2$-pole retaining a dangling
half-edge at each endpoint of the deleted edge. When $H$ is connected,
bipartite and det-extremal, this pole realises every boundary
prescription needed for a block $B$.

\begin{corollary}\label{ext:det-pole-cycles}
Let $k\ge2$. For each $i\in\{1,\ldots,k\}$, let $H_i$ be a
connected loopless cubic bipartite det-extremal multigraph and let
$e_i\in E(H_i)$. If $G$ is obtained by joining disjoint copies of
the $2$-poles $H_i-e_i$ cyclically, then $\sidx(T(G))=6$.
\end{corollary}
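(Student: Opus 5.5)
The strategy is to show that each $2$-pole $H_i-e_i$ realises exactly the boundary prescriptions that Proposition~\ref{ext:double-rule} gives for a block $B$. We then reuse the cyclic construction of Proposition~\ref{ext:block-cycles} with $k\ge2$ blocks, substituting the poles for the blocks.

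The key step is a pole version of the double rule. For any distinct $c,d$ and any $2$-set $W$ disjoint from $\{c,d\}$, we want a strong colouring of the augmented truncation of $H_i-e_i$ whose two boundary signatures are $(c,W)$ and $(d,W)$. Write $e_i=xy$. We obtain the colouring by vertex substitution at both ends of a single edge. Consider the $2$-pole $B$ with vertices $u,v$. By Proposition~\ref{ext:double-rule} it has a colouring with boundary signatures $(c,W),(d,W)$; write $u:(c,a,b;d,r,s)$ and $v:(d,a,b;c,s,r)$.

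I would rather argue directly with $H_i$. Take $M$ a perfect matching containing $e_i$, so that $x\in A$ and $y\in B$ with $x=a_r$, $y=b_r$. Apply the rooted construction of Theorem~\ref{bip:rooted-simulator}, in the multigraph form of Proposition~\ref{ext:rooted-simulator}, to $H_i-y$. This gives an exact vertex simulator in which the three ports at $y$ can carry any prescribed state. Choose that state to be the state of $v$ in $B$, with the boundary port of $v$ assigned to the port of $y$ that lies on $e_i$ (that is, the port at $x$). Then the two other ports of the simulator must attach to the parallel edges $p,q$. This does not match $H_i-e_i$, so I would modify the plan as follows. Form the cubic multigraph $K$ by joining a copy of $H_i-e_i$ to $B$ appropriately. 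The cleaner route is: $H_i-e_i$ with its two dangling ends joined back together is $H_i$ itself, and $H_i-y$ is a simulator.

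So the plan is this. Start from a coloured cyclic arrangement in which the $i$-th block is a single vertex-like piece $Y$ consisting of a vertex $y$ with a loop-free structure: $y$ has one incidence to the previous join, and $x$ is a vertex with one incidence to the next join. Concretely, take the cyclic arrangement of Proposition~\ref{ext:block-cycles} for $k$ blocks $B$, colour it, and in the $i$-th block (vertices $u_i,v_i$ joined by $p,q$) replace $v_i$ by the simulator $H_i-y_i$. The three incidences at $v_i$ are the outgoing join and $p,q$. The resulting graph has $u_i$ adjacent to two ports of $H_i-y_i$ and the third port going onward.

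This is not literally $H_i-e_i$ either. Instead, I would delete $u_i$ as well. Note that $H_i-e_i$ equals $(H_i-y_i)$ plus the vertex $y_i$ with one dangling edge in place of $e_i$. Therefore, I would replace $v_i$ by $H_i-y_i$ while keeping the port on $e_i$ as the outer boundary, and keep $y_i$ in the role of $u_i$. For this to work, $y_i$'s two remaining edges, which go into $H_i-y_i$, must play the roles of $p,q$. Lemma~\ref{bip:vertex-substitution} handles the substitution, because any bijection of incidences is allowed. Thus $H_i-e_i$ is exactly $B$ with $v_i$ replaced by the simulator $H_i-y_i$, with ports matched so that the two ports at neighbours of $y_i$ other than $x_i$ correspond to $p,q$ and the port at $x_i$ corresponds to the dangling edge. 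Since $H_i-y_i$ is an exact vertex simulator by Proposition~\ref{ext:rooted-simulator}, the boundary signatures of the block are preserved.

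Assembling the pieces: for $k\ge2$, colour the cyclic arrangement of $k$ blocks $B$ using Proposition~\ref{ext:block-cycles}; this needs no diamonds, and the result is not $M_0$ since $k\ge2$. In each block, substitute $H_i-y_i$ for $v_i$ via Lemma~\ref{bip:vertex-substitution}. All outside interfaces are unchanged, so Lemma~\ref{pre:interface} gives a strong $6$-colouring of $T(G)$. Equality then follows from \eqref{pre:lowerbound}.

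The main obstacle is verifying the identification of $H_i-e_i$ with $B$ having one vertex replaced by $H_i-y_i$, including when $y_i$ has a parallel edge to $x_i$ or to another neighbour. Here distinguished incidences and the multigraph simulator must be handled carefully.
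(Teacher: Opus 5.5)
Your final plan is correct and is essentially the paper's argument. In a coloured cycle of $k\ge2$ blocks $B$, you replace one vertex of each block by the exact vertex simulator $H_i-y_i$ of Proposition~\ref{ext:rooted-simulator}, sending the port of the incidence $e_i$ to the block's outside port. The unreplaced vertex then plays $y_i$, and $p,q$ restore its other two incidences.

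The paper phrases this as first showing that $H_i-e_i$ realises every prescription in \eqref{concl:double-rule}, whereas you substitute directly into the colouring via Lemma~\ref{bip:vertex-substitution}. Matching ports by incidence rather than by neighbour already settles the parallel-edge worry you raise.
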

\begin{proof}
For any connected loopless cubic bipartite det-extremal multigraph
$H$ and any edge $e$ incident with $z$, the $2$-pole $H-e$ realises every boundary
prescription in~\eqref{concl:double-rule}. Indeed, start with a
colouring of $B$ having the prescribed signatures and replace one
of its vertices by the exact vertex simulator $H-z$ from
Proposition~\ref{ext:rooted-simulator}. Assign the port corresponding
to $e$ to the outside port of the replaced vertex. The unreplaced
vertex of $B$ takes the role of $z$: its two non-boundary edges
restore the other two incidences of $z$. The resulting pole is
therefore $H-e$, with a dangling half-edge at each endpoint of $e$.
The replacement preserves the prescribed boundary signatures by
Lemma~\ref{pre:interface}.

Apply this observation to every $H_i-e_i$, using the boundary
prescriptions for a cycle of $k$ blocks $B$ in the proof of
Proposition~\ref{ext:block-cycles}. All joins are compatible, so
Lemma~\ref{pre:interface} supplies the colouring. The lower bound
follows from~\eqref{pre:lowerbound}.
\end{proof}

\FloatBarrier
\section{Concluding remarks}\label{concl:section}

Theorems~\ref{bip:main} and~\ref{cf:main} establish
$\sidx(T(G))=6$ for connected simple cubic bases that are
bipartite or are both bridgeless and claw-free. The constructions
above also settle several multigraph cases, but do not yet give
the full extension of either theorem to parallel edges.

Double truncations are covered without a bridgelessness assumption:
Corollary~\ref{dt:multigraph-extension} gives
$\sidx(T(T(Q)))=6$ for every loopless cubic multigraph $Q$,
including disconnected graphs and graphs with bridges. Here the base
of the final truncation is the simple graph $T(Q)$. The question for
a single truncation $T(Q)$ of a multigraph base is different; the
following discussion records what our results establish in that
setting.

\subsection{Bipartite multigraphs}

The non-det-extremal case is completely covered by
Corollary~\ref{bip:multigraph-nondet}: every connected loopless cubic
bipartite multigraph $G$ that is not det-extremal satisfies
$\sidx(T(G))=6$. Both the binary assignment and the colouring
formula remain valid when parallel incidences are distinguished.

For det-extremal bases, Proposition~\ref{ext:rooted-simulator}
extends the exact vertex simulator $H-v$ to multigraphs, and
Corollary~\ref{ext:det-pole-cycles} colours cyclic arrangements of
the $2$-poles $H_i-e_i$. These constructions give compatible
replacement pieces and colourings of specific families. They do
not yet colour $T(H)$ for every det-extremal multigraph
$H\not\cong M_0$.

The remaining difficulty in the proof via $2$-edge cuts is that an
end closure may be the $3$-bond $M_0$, whose truncation requires
nine colours. Thus the proof of Theorem~\ref{bip:main} cannot always
colour the two closures separately with six colours and then join
them. This is an obstruction in the auxiliary closures, not a
counterexample on the original truncation.

If a simple, $3$-connected and det-extremal end closure $P$ is
available, the alternating-cycle argument in the proof of
Lemma~\ref{bip:end-piece} still works: it gives an alternating cycle
of length divisible by four in the complementary closure $R$, even
when $G$ and $R$ have parallel edges. Hence $R$ is non-det-extremal,
and Corollary~\ref{bip:multigraph-nondet} colours $T(R)$. The edge-sum
construction can then join this colouring to that of $T(P)$.
Decompositions whose available end closures are $3$-bonds require
a further argument.

\subsection{Claw-free multigraphs}

For multigraphs, claw-freeness refers to the underlying simple graph.
In a connected bridgeless cubic claw-free multigraph other than
$M_0$, each pair of parallel edges forms a block $B$. Its two outside
neighbours are distinct, since a common neighbour would give a
bridge. Thus these blocks are disjoint from triangles, and the
triangle--diamond decomposition applies to the remaining vertices,
with $K_4$ handled separately.

If there are no triangle blocks, the graph is a cyclic arrangement
of diamonds and blocks $B$, so Proposition~\ref{ext:block-cycles}
settles this case completely. If triangle blocks are present, the
same contraction argument as in Lemma~\ref{cf:skeleton} partitions
the remaining blocks into chains between distinct triangle blocks:
a chain returning to the same triangle would leave its third
incidence on a bridge. Replacing these chains by edges gives a
reduced base $G_0\cong T(Q)$ for a loopless cubic multigraph $Q$.
In particular, $G_0$ is simple and has a partition into triangles,
so Lemma~\ref{cf:reduced-colouring} colours $T(G_0)$.

Corollary~\ref{ext:double-chains} then restores every non-empty
chain except one consisting of a single block $B$, preserving all
local states of the retained triangles. Consequently, the claw-free
extension is proved whenever none of the chains between triangle
blocks consists of a single $B$ without diamonds. Chains with one
block $B$ and at least one diamond are already covered.

Only the insertion of a single block $B$ remains outside this
construction. By Proposition~\ref{ext:double-rule}, its boundary
colours must be distinct, so it cannot replace an edge while
preserving both endpoint states. This local incompatibility does
not show that the resulting truncation needs more than six colours.
The unresolved step is to coordinate the boundary states at chains
consisting of a single block $B$, allowing the retained pieces to be
recoloured.

\subsection{The general question}

The general problem remains open:
\begin{quote}
\itshape
Does every connected loopless bridgeless cubic multigraph
$G\not\cong M_0$ satisfy $\sidx(T(G))=6$?
\end{quote}
The exclusion of $M_0$ is necessary: $T(M_0)$ is the triangular
prism and $L(T(M_0))^2=K_9$. The hypothesis of bridgelessness
cannot be dropped either, since the multigraph underlying Tanwar's
example has a bridge and its truncation has strong chromatic index
seven~\cite{Tanwar2026}.

The question also includes simple bases outside the two classes
treated here. Within these classes, the remaining tasks are the
det-extremal bipartite extension beyond the constructions above and
the insertion of a single block $B$ between triangle blocks in the
claw-free case.

\FloatBarrier
\Needspace{16\baselineskip}

\end{document}